%------------------------------------------------------------------------------
% Beginning of journal.tex
%------------------------------------------------------------------------------
%
% AMS-LaTeX version 2 sample file for journals, based on amsart.cls.
%
%        ***     DO NOT USE THIS FILE AS A STARTER.      ***
%        ***  USE THE JOURNAL-SPECIFIC *.TEMPLATE FILE.  ***
%
% Replace amsart by the documentclass for the target journal, e.g., tran-l.
%
\documentclass[12pt]{amsart}
\usepackage{amssymb}
\usepackage{float}
\usepackage{fullpage}
\usepackage{cleveref}

\newtheorem{theorem}{Theorem}[section]
\newtheorem{lemma}[theorem]{Lemma}
\newtheorem{corollary}[theorem]{Corollary}
\newtheorem{proposition}[theorem]{Proposition}

\theoremstyle{remark}
\newtheorem{remark}[theorem]{Remark}

\numberwithin{equation}{section}

\newcommand{\R}{{\mathbb R}}
\newcommand{\Z}{{\mathbb Z}}
\newcommand{\N}{{\mathbb N}}
\newcommand{\E}{{\mathbb E}}
\newcommand{\dd}{{\rm d}}
\newcommand{\CC}{{\mathbb{C}}}
\renewcommand{\Re}{\mathop{\rm Re}}

\begin{document}

\title[Almost primes]{On Erd\H{o}s sums
of almost primes}

\author{Ofir Gorodetsky}
%    Address of record for the research reported here
\address{Mathematical Institute, University of Oxford, Oxford, OX2 6GG, UK}
\email{gorodetsky@maths.ox.ac.uk}

\author{Jared Duker Lichtman}
%    Address of record for the research reported here
\address{Department of Mathematics, Stanford University, Stanford, CA, USA}
\email{jared.d.lichtman@gmail.com}

\author{Mo Dick Wong}
%    Address of record for the research reported here
\address{Department of Mathematical Sciences, Durham University, Stockton Road, Durham DH1 3LE}
\email{mo-dick.wong@durham.ac.uk}
%    \thanks will become a 1st page footnote.
%\thanks{The first author was supported in part by NSF Grant \#000000.}

%    General info
\subjclass[2010]{11N25, 11Y60, 11A05, 60G18, 60H25}

%Dickman distribution, recursive distributional equation

%\date{\today}

\maketitle

\begin{abstract}
In 1935, Erd\H{o}s proved that the sums $f_k=\sum_n 1/(n\log n)$, over integers $n$ with exactly $k$ prime factors, are bounded by an absolute constant, and in 1993 Zhang proved that $f_k$ is maximized by the prime sum $f_1=\sum_p 1/(p\log p)$. According to a 2013 conjecture of Banks and Martin, the sums $f_k$ are predicted to decrease monotonically in $k$. In this article, we show that the sums restricted to odd integers are indeed monotonically decreasing in $k$, sufficiently large. By contrast, contrary to the conjecture we prove that the sums $f_k$ increase monotonically in $k$, sufficiently large.

Our main result gives an asymptotic for $f_k$ which identifies the (negative) secondary term, namely $f_k = 1 - (a+o(1))k^2/2^k$ for an explicit constant $a= 0.0656\cdots$. This is proven by a refined method combining real and complex analysis, whereas the classical results of Sathe and Selberg on products of $k$ primes imply the weaker estimate $f_k=1+O_{\varepsilon}(k^{\varepsilon-1/2})$. We also give an alternate, probability-theoretic argument related to the Dickman distribution. Here the proof reduces to showing a sequence of integrals converges exponentially quickly to $e^{-\gamma}$, which may be of independent interest.
\end{abstract}

\section{Introduction}

Let $\Omega(n)$ denote the number of prime factors of $n$, counted with repetition. If $\Omega(n)=k$, $n$ is called a $k$-almost prime. For each $k\ge1$ denote the series
\begin{align*}
f_k = \sum_{\Omega(n)=k}\frac{1}{n\log n}.
\end{align*}
Here $f_k$ is the {\it Erd\H{o}s sum} of $k$-almost primes, also called the $k^\textnormal{th}$ {\it fingerprint number}.
In 1935 Erd\H{o}s \cite{Erd} showed that $f_k = O(1)$ is bounded\footnote{Indeed, his result bounded Erd\H{o}s sums $f(A)=\sum_{n\in A}1/(n\log n)$ uniformly over any primitive set $A$.} and in 1993 Zhang \cite{Zh2} proved the primes have maximal Erd\H{o}s sum, that is, $f_k \le f_1$ holds for all $k$. This had given initial evidence towards the Erd\H{o}s primitive set conjecture, now recently proven by the second author \cite{LEPSC}.

In 2013, Banks and Martin \cite{BM} posed a vast generalization of the Erd\H{o}s primitive set conjecture (see \cite{BM}, \cite{LEPSC} for details and further discussion). In particular, they conjectured that the sums $f_k$ decrease monotonically in $k$. Denote the sum $f_{k,y}$ restricting $f_k$ to integers without prime factors $\le y$, that is,
\begin{align*}
f_{k,y}=\sum_{\substack{\Omega(n)=k\\ p\mid n\Rightarrow p> y}} \frac{1}{n \log n}.
\end{align*}
Banks and Martin further conjectured that for any fixed $y \ge 1$, $f_{k,y}$ decrease monotonically.

We prove that their conjecture holds for $y\ge2$ and $k$ sufficiently large. By contrast, we prove that $f_k=f_{k,1}$ increases monotonically in $k$, sufficiently large.

\begin{theorem}\label{thm:monotonic}
Let $y\ge2$. For $k$ sufficiently large, we have $f_{k-1}<f_{k}$ and $f_{k-1,y}>f_{k,y}$.
\end{theorem}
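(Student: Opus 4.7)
The plan is to deduce Theorem~\ref{thm:monotonic} from the refined asymptotic announced in the abstract,
\[
  f_k \;=\; 1 - (a + o(1))\,\frac{k^2}{2^k},
\]
with explicit $a > 0$, together with an analogous asymptotic for the sifted sums $f_{k,y}$. The first claim $f_{k-1} < f_k$ is then immediate: for $k \geq 3$ one computes $(k+1)^2/(2k^2) < 1$ with limiting value $1/2$, so $k^2/2^k$ is strictly decreasing with successive gap $(k^2 - 2k - 1)/2^{k+1}$ of order $k^2/2^{k+1}$. Hence $1 - a k^2/2^k$ is strictly increasing with gap of the same order, which dominates the $o(k^2/2^k)$ error for all sufficiently large $k$.

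For the second claim, concerning $f_{k,y}$ with $y \geq 2$, the strategy is to establish a companion asymptotic of the form $f_{k,y} = L_y + (c_y + o(1))\,k^{\alpha_y}/\beta_y^k$ with $c_y > 0$, $\beta_y > 1$, and $L_y$ the limiting value, from which monotone decrease follows by the same elementary calculus. The structural reason one expects a positive secondary correction here is that the negative term $-a\,k^2/2^k$ in $f_k$ is driven by the prime $2$: integers with many factors of $2$ are the smallest $k$-almost primes and so contribute the largest weights $1/(n \log n)$, and their aggregate is what pulls $f_k$ below $1$ at the $2^{-k}$ scale. Excluding the prime $2$, which is automatic once $y \geq 2$, removes this source entirely, and the next-order behavior is governed by the smallest remaining prime (e.g.\ the prime $3$ when $y = 2$), at the strictly slower rate $\beta_y^{-k}$ with $\beta_y \geq 3$, and with the opposite sign that one must verify rigorously.

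The main obstacle is precisely that verification: establishing the sifted asymptotic with the correct sign of the secondary term. Once either asymptotic is in hand the corresponding monotonicity is routine, but adapting the refined analysis from $f_k$ to $f_{k,y}$ requires reworking the singularity (or saddle-point) analysis of the relevant generating function with finitely many Euler factors excluded, identifying the new dominant contribution, and checking that no cancellations among the subdominant terms contributed by primes $p > y$ disturb the predicted sign. The unsifted asymptotic already combines real and complex analytic tools, per the abstract; the sifted refinement demands a parallel but independent argument, made more delicate by the much smaller magnitude of the secondary term compared with $f_k$.
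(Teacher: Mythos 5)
Your first half is fine: given the asymptotic $f_k=1-(a+o(1))k^2/2^k$ (Theorem \ref{thm:fkthm}), the strict increase of $f_k$ for large $k$ follows by the elementary computation you give, and this is exactly how the paper deduces it. The genuine gap is in the second half. For $f_{k-1,y}>f_{k,y}$ you do not produce a proof at all: you state that one must ``establish a companion asymptotic \dots with the correct sign'' and then explicitly concede that this verification is the main obstacle. But that verification is the entire content of the claim; without it you have only restated the problem. The paper closes this gap with Theorem \ref{thm:fkck22k}, which gives $f_{k,y}=\prod_{p\le y}(1-1/p)+c_yd_y/2^k+O_y(k^3/3^k)$, and then with a short but essential positivity check $c_y\ge c_2>0$, using $\sum_{p>2}\frac{\log p}{(p-1)(p-2)}<0.9<\log 2+\gamma$ for $c_y$ as in \eqref{def:cy}. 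Your proposal never addresses this sign question beyond saying it ``must be verified rigorously.''

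Moreover, the heuristic guiding your plan is wrong in a way that would mislead the verification. You predict that once the prime $2$ is sifted out, the secondary term decays at rate $\beta_y^{-k}$ with $\beta_y\ge 3$, governed by the smallest remaining prime. In fact the paper shows the secondary term of $f_{k,y}$ for $y\ge 2$ is still of size $2^{-k}$ (namely $c_yd_y/2^k$); this $2^{-k}$ is not arithmetic in origin but analytic, arising in Lemma \ref{lem:IminusI2y} from the first-order Taylor term in $s$ near $1$ via $\int_0^\infty t^{j}e^{-2t}\,\dd t$, while the smallest remaining prime $y_1\ge 3$ only enters the error terms $O_y(y_1^{-k})$ and $O_y(k^3/3^k)$. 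So the ``next-order behavior governed by the prime $3$'' picture would have you hunting for the main correction at the wrong scale, and the sign of the actual $2^{-k}$ term is not determined by the exclusion of small integers but by the constant $c_y=\gamma+\sum_{p\le y}\frac{\log p}{p-1}-\sum_{p>y}\frac{\log p}{(p-1)(p-2)}$, whose positivity requires the explicit numerical estimate above.
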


When $y\ge2$, we believe $f_{k-1,y}>f_{k,y}$ should hold for all $k>1$, in accordance with Banks--Martin \cite{BM}. When $y=1$, we believe $f_{k-1}<f_{k}$ holds for all $k>6$. These inequalities have been verified numerically up to $k\le 20$ \cite{Lalmost}.

The classical Sathe--Selberg theorem gives asymptotics for the counting function of $k$-almost primes, and implies $f_k$ converges to 1 with square-root error. That is $f_k=1+O_{\varepsilon}(k^{\varepsilon-1/2})$, see \cite[Theorem~4.1]{Lalmost}. We give an exponential refinement of this result, which identifies the (negative) secondary term to be $-a_1\, k^2/2^k$ for an explicit constant $a_1= 0.0656\cdots$.

\begin{theorem}\label{thm:fkthm}
For all $k\ge1$ we have
\begin{align*}
f_{k} = 1 - 2^{-k} \big(a_1k^2 + O\big(k \log(k+1)\big)\big),
\end{align*}
where $a_1=(d\log 2)/4$ and
\begin{align}\label{def:d}
d:= \frac{1}{4}\prod_{p > 2} \left( 1-\frac{2}{p}\right)^{-1} \left(1-\frac{1}{p}\right)^2 = 0.37869\cdots.
\end{align}
\end{theorem}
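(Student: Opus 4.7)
The plan is to pass from the series $f_k$ to the generating function
\[G(z) := \sum_{k \ge 1} f_k z^k\]
and perform a singularity analysis. Using $\frac{1}{n\log n} = \int_1^\infty n^{-u}\,du$ for $n \ge 2$, I would write $f_k = \int_1^\infty P_k(u)\,du$ with $P_k(u) = \sum_{\Omega(n)=k} n^{-u}$, whose Euler product gives $\sum_{k\ge 0} P_k(u) z^k = \prod_p(1-z/p^u)^{-1}$. Interchanging sum and integral yields, for $|z|<1$,
\[G(z) = \int_1^\infty \Big(\prod_p(1-z/p^u)^{-1}-1\Big)\,du,\]
and the factorization $\prod_p(1-z/p^u)^{-1} = (u-1)^{-z} A(z,u)$ holds, where $A(z,u) := [\zeta(u)(u-1)]^z \prod_p(1-z/p^u)^{-1}(1-1/p^u)^z$ is analytic in $(z,u)$ on a neighborhood of $\{|z|<2\}\times[1,\infty)$ apart from a simple pole of $A(z,1)$ at $z=2$ from the $p=2$ Euler factor.

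To continue $G$ past $|z|=1$, I would isolate the boundary singularity at $u=1$: using $\int_1^2 (u-1)^{-z}\,du = \tfrac{1}{1-z}$ (by analytic continuation in $z$) gives
\[G(z) = \frac{A(z,1)}{1-z} - 1 + \int_1^2 (u-1)^{-z}[A(z,u)-A(z,1)]\,du + \int_2^\infty \Big(\prod_p (1-z/p^u)^{-1}-1\Big)\,du,\]
valid on $|z|<2$, with the last two integrals analytic in $|z|<2$ and $|z|<4$ respectively. Since $A(1,1)=1$, the simple pole of $A(z,1)/(1-z)$ at $z=1$ with residue $-1$ confirms $f_k\to 1$. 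All further singularities inside $|z|<2$ lie at $z=2$, coming from the simple pole of $A(z,1)$ together with the joint singular behaviour of $(u-1)^{-z}[A(z,u)-A(z,1)]$ at the corner $(z,u)=(2,1)$.

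Writing $A(z,u) = B(z,u)/(1-z/2^u)$ with $B$ analytic near $(2,1)$ and $B(2,1)=d$, I decompose $A(z,u)-A(z,1) = (1-z/2^u)^{-1}[B(z,u)-B(z,1)] + B(z,1)[(1-z/2^u)^{-1}-(1-z/2)^{-1}]$. A local expansion at $(z,u)=(2,1)$ with $s = 1-z/2$ and $t = u-1$ gives $(1-z/2^u)^{-1}-(1-z/2)^{-1} \sim -t\log 2/[s(s+t\log 2)]$, and the dominant second summand contributes integrand $-d\log 2\, t^{2s-1}/[s(s+t\log 2)]$ up to smooth factors. Substituting $\tau = t\log 2/s$ and using the beta identity $\int_0^\infty \tau^{2s-1}(1+\tau)^{-1}\,d\tau = \pi/\sin(2\pi s) \sim 1/(2s)$ as $s\to 0^+$ identifies the singular part of $G(z)$ at $z=2$ as $-\tfrac{d\log 2}{2}(1-z/2)^{-3}$, with subleading terms of order $(1-z/2)^{-2}\log(1/|1-z/2|)$ from the first summand and from higher Taylor corrections of $B$. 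A Flajolet--Sedgewick transfer via a Hankel contour around $z=2$, combined with $[z^k](1-z/2)^{-3} = \binom{k+2}{2}2^{-k}$, then extracts
\[f_k = 1 - \frac{d\log 2}{2}\binom{k+2}{2}2^{-k} + O\bigl(k\log(k+1)\cdot 2^{-k}\bigr) = 1 - \frac{\log 2}{4}\bigl(dk^2 + O(k\log(k+1))\bigr)2^{-k}.\]

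The main technical obstacle is the third-paragraph analysis: the moving pole of $(1-z/2^u)^{-1}$ at $u=\log_2 z$ approaches the integration boundary $u=1$ as $z\to 2$ and conspires with the $(u-1)^{-z}$ boundary factor to produce a triple-order singularity at $z=2$. Extracting the precise coefficient $-\tfrac{d\log 2}{2}$ relies on the exact beta integral and its Laurent expansion near $s=0$, while obtaining the error $O(k\log(k+1))$ demands uniform control on the subleading terms, in particular a logarithmic correction from the summand $(1-z/2^u)^{-1}[B(z,u)-B(z,1)]$ and from higher-order Taylor corrections in $u$ of the smooth factor $B$.
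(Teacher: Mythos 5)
Your route is genuinely different from ours: you form the generating function $\sum_k f_k z^k$ and perform complex singularity analysis at $z=2$, whereas we never leave the region of analyticity --- we expand $F(s,z)=(s-1)^{-z}(1-z/2^s)^{-1}G_2(s,z)$ under the integral over $s\in[1,2]$, extract $z$-coefficients by Cauchy's formula on circles strictly inside the domain of analyticity, and produce the $k^2$ main term together with the $k\log k$ error from the real-variable estimate of Lemma \ref{lem:tech}; no Hankel contours or transfer theorems appear. Your local computation at the corner is consistent with the theorem: writing $s=1-z/2$, the singular behaviour $-\tfrac{d\log 2}{2}s^{-3}+O\big(s^{-2}\log(1/s)\big)$ matches $f_k=1-\tfrac{\log 2}{4}2^{-k}(dk^2+O(k\log k))$, your identification $B(2,1)=G_2(1,2)=d$ is correct, and the factor $(s/\log 2)^{2s}=1+2s\log s+\cdots$ arising in your beta-substitution is exactly the source of the logarithmic correction (and of the unusual $k/\log k$ saving we point out after Lemma \ref{lem:tech}).

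However, the coefficient-extraction step has a genuine gap as written. Your decomposition continues $G(z)$ only into the half-plane $\Re z<2$: the integral $\int_1^2(u-1)^{-z}\big[A(z,u)-A(z,1)\big]\,\dd u$ diverges for $\Re z\ge 2$, because $A(z,u)-A(z,1)$ vanishes only to first order at $u=1$. A Flajolet--Sedgewick transfer via a Hankel contour around $z=2$ requires analyticity and a \emph{uniform} singular expansion in a $\Delta$-domain, i.e.\ at points with $\Re z>2$ off a slit, which your representation does not furnish; moreover your expansion is derived only along the real approach $z\to 2^-$ (the beta identity is applied for real $0<2s<1$, and the substitution $\tau=t\log 2/s$ rotates the contour once $s$ is complex). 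To close the gap you must either (i) subtract further terms of the $u$-Taylor expansion of $A(z,u)$ at $u=1$ to push the continuation past $\Re z=2$ --- but the coefficients $\partial_u^m A(z,1)$ have poles of order $m+1$ at $z=2$, so uniform control of the remainder near $z=2$ is precisely the delicate point --- or (ii) forgo the transfer theorem and integrate $G(z)z^{-k-1}$ over a closed contour staying in $\Re z\le 2-c/k$ (and $|z|\le 3-\varepsilon$), evaluating the contribution of $-\tfrac{d\log2}{2}(1-z/2)^{-3}$ exactly by residues and bounding the rest; but this still requires the error bound $O\big(|1-z/2|^{-2}\log(1/|1-z/2|)\big)$ uniformly for \emph{complex} $z$ on the vertical segment near $z=2$, not merely for real $z$. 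Until one of these is carried out, the passage from your real-axis local expansion to the stated asymptotic for $f_k$ is not justified.
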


To motivate the proof of Theorem~\ref{thm:fkthm}, we first handle the sifted Erd\H{o}s sums $f_{k,y}$, whose (positive) secondary term is of order $O_y(2^{-k})$ when $y\ge2$, and so converge more rapidly.
\begin{theorem}\label{thm:fkck22k}
Let $y\ge 2$. We have 
\begin{align*}
f_{k,y}
= \prod_{p \le y}\left(1-\frac{1}{p}\right) + a_y/2^k + O_y(k^3/3^{k})
\end{align*}
	uniformly for $k \ge 1$, where $a_y=c_yd_y$ for	\begin{align}
		c_y&= \gamma + \sum_{p \le y} \frac{\log p}{p-1}-\sum_{p>y}\frac{\log p}{(p-1)(p-2)}, \label{def:cy}\\
		d_y	&= d\prod_{2<p \le y} \left(1-\frac{2}{p}\right),
	\end{align}
for $d$ as in \eqref{def:d}, and $\gamma=0.5772\cdots$ is the Euler--Mascheroni constant.
\end{theorem}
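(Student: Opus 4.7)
The plan is to express $f_{k,y}$ as the $k$-th Taylor coefficient of an auxiliary generating function $J_y(z)$, continue it meromorphically past $|z|=2$, and extract the main and secondary terms as residues from a contour shift. Using the identity $1/\log n = \int_0^\infty n^{-t}\,dt$ (valid for $n\ge 2$), one can write
\begin{align*}
f_{k,y} = \int_0^\infty D_{k,y}(1+t)\,dt, \qquad D_{k,y}(s) := \sum_{\substack{\Omega(n)=k\\ p\mid n\Rightarrow p>y}} n^{-s} = [z^k]L_y(z,s),
\end{align*}
where $L_y(z,s) := \prod_{p>y}(1-z/p^s)^{-1}$. For small $|z|$, swapping summation with integration produces
\begin{align*}
J_y(z) := \sum_{k\ge 1} f_{k,y} z^k = \int_0^\infty \bigl(L_y(z,1+t) - 1\bigr)\,dt,
\end{align*}
so that $f_{k,y} = \frac{1}{2\pi i}\oint_{|z|=r} J_y(z) z^{-k-1}\,dz$ for $r$ small.

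The next step is to continue $J_y$ meromorphically past $|z|=2$ via the factorization $L_y(z,s) = \zeta(s)^z H_y(z,s)$, where
\begin{align*}
H_y(z,s) := \prod_{p\le y}(1-p^{-s})^z \cdot \prod_{p>y}(1-z/p^s)^{-1}(1-p^{-s})^z
\end{align*}
is jointly holomorphic for $\Re s > 1/2$ and $|z|<p^*_y$ (the smallest prime exceeding $y$; so $p^*_y \ge 3$ for $y \ge 2$). Writing $s = 1+t$ and using $\zeta(1+t) = t^{-1} + \gamma + O(t)$, one has $L_y(z,1+t) = t^{-z} G_y(t,z)$ with $G_y(t,z) := (t\zeta(1+t))^z H_y(z,1+t)$ jointly holomorphic for $t$ near $0$ and $|z|<p^*_y$ (except at the single point $(t,z)=(0,3)$ when $y=2$). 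Splitting $\int_0^\infty = \int_0^1 + \int_1^\infty$, the tail is holomorphic in $|z|<(p^*_y)^2$, while the head admits the meromorphic expansion
\begin{align*}
\int_0^1 t^{-z} G_y(t,z)\,dt = \sum_{j=0}^{N-1} \frac{G_j(z)}{j+1-z} + R_N(z),
\end{align*}
with $G_j(z) := \partial_t^j G_y|_{t=0}/j!$ and $R_N$ holomorphic for $\Re z < N+1$. Thus $J_y$ is meromorphic on $|z|<p^*_y$ with simple poles at $z=1,2,\ldots$ and residue $-G_j(j+1)$ at $z=j+1$, plus whatever singularities $H_y$ contributes at $z = p^*_y$.

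A direct computation gives $G_0(z) = H_y(z,1)$ and $G_1(z) = \gamma z H_y(z,1) + \partial_s H_y(z,1)$. Shifting the contour outward to $|z|=3-\delta$ collects residues at $z=1,2$: at $z=1$, the residue is $-H_y(1,1) = -\prod_{p\le y}(1-1/p)$, producing the main term; at $z=2$, a direct rearrangement of Euler products shows $H_y(2,1) = d_y$, and the logarithmic derivative
\begin{align*}
\tfrac{1}{2}\partial_s \log H_y(z,s)\big|_{(z,s)=(2,1)} = \sum_{p\le y}\frac{\log p}{p-1} - \sum_{p>y}\frac{\log p}{(p-1)(p-2)}
\end{align*}
combined with $\gamma$ yields the secondary term $c_y d_y/2^k$. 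The hardest part will be bounding the remaining contour integral on $|z|=3-\delta$ by $O_y(k^3/3^k)$. For $y=2$, the factor $(1-z/3^{1+t})^{-1}$ in $H_y$ combined with $\zeta(1+t)^z \sim t^{-z}$ creates a pole of order $4$ in $J_y$ at $z=3$ (seen, e.g., by the substitution $u = (z\log 3)t/(3-z)$, which reduces the singular part of the integral to $\pi/\sin(\pi z)$ times $(3-z)^{-3}$); equivalently, Cauchy's estimate on the Taylor coefficients of $G_y$ in $t$ gives $G_j(z) \ll_y (3-z)^{-j-1}$ near $z=3$, making the term $G_2(z)/(3-z)$ dominate with pole of order $4$. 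Parameterizing $z=(3-\delta)e^{i\theta}$ yields $|z-3| \asymp (\delta^2 + 9\theta^2)^{1/2}$, whence $\int_{-\pi}^\pi |J_y(z)|\,d\theta \ll_y \delta^{-3}$; choosing $\delta\asymp 1/k$ balances this against the $(3-\delta)^{-k}$ decay to give the claimed $O_y(k^3/3^k)$ error. For $y\ge 3$, the factor $(1-z/3^s)^{-1}$ is absent from $H_y$, so $J_y$ has only a simple pole at $z=3$ and the bound follows trivially.
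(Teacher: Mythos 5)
Your proposal is sound and reaches the stated asymptotic, but by a genuinely different route from the paper. You integrate over $t$ first to form the full generating function $J_y(z)=\sum_k f_{k,y}z^k$, continue it past $|z|=2$ by the Mellin-type endpoint expansion $\int_0^1 t^{-z}G_y(t,z)\,\dd t=\sum_{j<N}G_j(z)/(j+1-z)+R_N(z)$, and read off the main and secondary terms as residues at $z=1,2$ after one contour shift; your computations $G_0(1)=\prod_{p\le y}(1-1/p)$, $H_y(2,1)=d_y$ and $G_1(2)=2d_yc_y$ are correct and match the paper's $G_y(1,1)$, $G_y(1,2)$, $G_y^{(1,0)}(1,2)$. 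The paper instead keeps $s$ and $z$ separate: it works with $\int_1^2\frac{1}{k!}F_y^{(0,k)}(s,0)\,\dd s$, replaces $G_y^{(0,i)}(s,0)$ by its value at $s=1$ plus a first-order Taylor term, uses $\int_1^2\frac{(-\log(s-1))^j}{j!}\dd s=1$ and its weighted variants to collapse the $s$-integrals, and then extracts everything by Cauchy's formula on circles of \emph{fixed} radius; the error analysis reduces to the elementary coefficient bound $[z^i]G_y^{(2,0)}(s,z)\ll(i+1)^2y_1^{-i}$ uniformly in $s\in[1,2]$, so no shrinking contour or coalescing singularities ever appear. What your route buys is transparency (the $2^{-k}$ term is literally the residue at $z=2$, and the $3^{-k}$-scale error visibly comes from $z=3$); what it costs is that for $y=2$ the point $z=3$ is where the $t\to0$ endpoint singularity coalesces with the pole of the Euler factor at $z=3^{1+t}$, and this is exactly where your argument is only sketched. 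Two cautions there: (i) the singularity of $J_2$ at $z=3$ is not a pole of order $4$ --- every $G_j(z)$ has a pole of order $j+1$ at $z=3$, and your own substitution shows the local behaviour is of branch type, roughly $(3-z)^{-z-1}$; this does not hurt you because all you use is the magnitude bound $|J_2(z)|\ll_y|3-z|^{-4}$ on $|z|=3-\delta$, which is correct, but you should state it as an upper bound rather than a pole-order claim, and you must also prove the matching bound for the remainder $R_N(z)$ near $z=3$ (splitting $t\le c|3-z|$, where the Taylor radius in $t$ shrinks like $|3-z|$, from $t\ge c|3-z|$, where $|G_y(t,z)|\ll 1/t$); with that, $\int_{-\pi}^{\pi}|J_2|\,\dd\theta\ll\delta^{-3}$ and $\delta\asymp1/k$ indeed give $O(k^3/3^k)$, and for $y\ge3$ the extra simple pole at $z=3$ contributes $O_y(3^{-k})$, which is admissible. (ii) Your claim that $H_y(z,s)$ is jointly holomorphic for $\Re s>1/2$, $|z|<y_1$ is slightly too strong (the factor $(1-z/y_1^s)^{-1}$ has poles at $z=y_1^s$, of modulus $y_1^{\Re s}<y_1$ when $\Re s<1$), but since you only use $s=1+t$ with $t$ in a small neighbourhood of $[0,1]$ this is harmless.
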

As usual $O_y(\cdots)$ means the implicit constant may depend on $y\ge2$, but not on $k$.

In particular, from Theorems \ref{thm:fkthm} and \ref{thm:fkck22k} we infer the even and odd terms in $f_k$ contribute $\frac{1}{2}-(a_1+o(1))k^2/2^k$ and $\frac{1}{2}+(a_2+o(1))2^{-k}$, respectively. As will be shown in the proofs, this discrepancy in the size (and sign) of the secondary terms ultimately come from the different behavior of 
\[ \lim_{s \to 1^+}  (s-1)^{-z} \sum_{p \mid n \Rightarrow p>y} \frac{z^{\Omega(n)}}{n^s}\]
when $y=1$ and $y\ge2$. Namely, the singularity closest to $0$ when $y=1$ is at $z=2$ while for $y\ge2$ it is more distant. This provides a clean answer to the hitherto unexplained numerical observations up to $k\le 20$.
\begin{remark}
Our methods can also handle $\sum_{\omega(n)=k,\, p \mid n \Rightarrow p>y}1/(n\log n)$, where $\omega(n)=\sum_{p \mid n}1$. For this problem, the analysis does not have a discrepancy between $y=1$ and $y\ge2$.

Our methods should also handle the Dirichlet series $\sum_{\Omega(n)=k} n^{-t}$ for $t>1$. For these, striking work of Banks--Martin \cite{BM} shows that $k=1$ is maximal if and only if $t>\tau$, where $\tau= 1.14\cdots$ is the unique solution to a certain functional equation involving the Riemann zeta function. When $t<\tau$ it is not understood which $k=k_t$ is maximal, also see \cite{CLP2,CLPk}. Finally, one may also consider translated sums $\sum_{\Omega(n)=k} 1/(n(\log n+h))$ for $h\in\R$, where the author \cite{Ltranslate} proved that $k=1$ is not maximal for $h>1.04\cdots$. In fact, $k=1$ is minimal if and only if $h>0.803\cdots$.
\end{remark}
\begin{remark}
Hankel contours over the complex plane are used in the Selberg--Delange method, concerning asymptotics for the sums $\sum_{n \le x} z^{\Omega(n)}$, $z$ complex. Saddle point analysis is used in the Sathe--Selberg theorem, which extracts information on the counting function of $k$-almost primes from these sums. These devices are powerful but lead to poor savings. By contrast, our proofs contain neither saddle point analysis nor Hankel contours. 

Moreover, the Selberg--Delange method assumes a zero-free region for $\zeta(s)$, as well as a bound on $\log\zeta(s)$ to the left of $\Re(s)=1$. By contrast, our proofs make no complex-analytic assumptions about $\zeta(s)$ whatsoever. We only use the (very basic) Taylor expansion $\zeta(t)(t-1)= 1 + \gamma(t-1) + O((t-1)^2)$ for real $t\in(1,2)$.
\end{remark}
\subsection{A sequence of integrals}

We also provide an alternate, probability-theoretic argument, which gives (weaker) exponential error $f_k=1 + O(k/2^{k/4})$, but which has potentially much wider applicability to other primitive sets beyond $k$-almost primes. We leave further development of this perspective to future work.

The rough strategy is to first show an asymptotic relation 
\begin{equation}\label{eq:fksim}
f_{k} \sim e^{\gamma} I_{\lfloor k/4\rfloor}
\end{equation}
for a certain sequence of iterated integrals $I_k$. Here $\gamma=0.5772\cdots$ is the Euler--Mascheroni constant.
Specifically, let $I_0=1$ and for $k\ge 1$ let
\begin{align}
I_k=\int_{[0,1]^k} \frac{\dd x_1 \dd x_2 \cdots \dd x_k}{1+x_1(1+x_2(\cdots(1+x_k)\cdots))}.
\end{align}
Let $P_1(n)$ be the largest prime factor of $n$. To connect $f_k$ to $I_k$, we use Mertens' theorem to show that $f_k$ is close to $e^{\gamma}$ times a \textit{weighted average} of $\log P_1(n)/\log n$ over integers $n$ with $\Omega(n)=k$, see \eqref{eq:fk1uj0}. Then for each $n=p_1\cdots p_k$ ($p_1 \ge p_2 \ge \cdots$), the reciprocal of this ratio may be expressed as
\begin{align*}
\frac{\log n}{\log P_1(n)} &=  1 + \frac{\log p_2}{\log p_1}\Big(1+\cdots\Big(1+\frac{\log p_{j+1}}{\log p_j} \Big(1+\frac{\log\big(p_{j+2}\cdots p_k\big)}{\log p_{j+1}}\Big)\Big)\cdots\Big).
\end{align*}
Further, we show the consecutive ratios $\frac{\log p_{i+1}}{\log p_i}\in[0,1]$ are independent and uniformly distributed in $[0,1]$ (with respect to a certain probability measure) which ultimately leads to \eqref{eq:fksim}. See \eqref{eq:fk1uj0} and \eqref{eq:f1kleIj}--\eqref{eq:f1kgeIj} for the precise quantitative formulation of \eqref{eq:fksim}.

Finally, we prove that $I_k$ converges exponentially quickly to $e^{-\gamma}$, which may be of independent interest. The qualitative convergence $I_k\to e^{-\gamma}$ may be deduced from work of Chamayou \cite{Chamayou} (cf.~\cite[Prop.~2.1]{Molchanov}). 

\begin{theorem}\label{thm:iteratedIk}
We have $I_k=e^{-\gamma} + O(2^{-k})$.
\end{theorem}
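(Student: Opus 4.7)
The plan is to interpret $I_k$ probabilistically, identify a clean limit, and derive the error estimate from a simple coupling.

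First, let $U_1, U_2, \ldots$ be i.i.d.\ uniform on $[0,1]$ and set
\[
W_k := 1 + U_1 + U_1 U_2 + \cdots + U_1 U_2 \cdots U_k.
\]
Treating $x_i$ in the definition of $I_k$ as $U_i$ gives $I_k = \E[1/W_k]$. Since $\E[\prod_{i=1}^j U_i] = 2^{-j}$ is summable, $W_k$ increases almost surely to an integrable limit $W := 1 + \sum_{j \ge 1}\prod_{i=1}^j U_i$. Using $W_k, W \ge 1$,
\[
0 \le I_k - \E[1/W] \;=\; \E\!\left[\frac{W - W_k}{W_k W}\right] \;\le\; \E[W - W_k] \;=\; \sum_{j > k} 2^{-j} \;=\; 2^{-k},
\]
so the task reduces to showing $\E[1/W] = e^{-\gamma}$.

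For that, I would compute $\E[1/W] = \int_0^\infty \E[e^{-sW}]\,\dd s$. Since $-\log U_i$ are i.i.d.\ $\mathrm{Exp}(1)$, the partial sums $T_j := -\sum_{i=1}^j \log U_i$ are the arrival times of a rate-$1$ Poisson process on $[0,\infty)$, and $W - 1 = \sum_{j \ge 1} e^{-T_j}$. Campbell's formula combined with the substitution $u = s e^{-t}$ then gives
\[
\E[e^{-s(W-1)}] \;=\; \exp\!\left(\int_0^\infty (e^{-s e^{-t}}-1)\,\dd t\right) \;=\; \exp\!\left(\int_0^s \frac{e^{-u}-1}{u}\,\dd u\right) \;=\; \frac{e^{-\gamma - E_1(s)}}{s},
\]
where $E_1(s) := \int_s^\infty e^{-u}/u\,\dd u$ and I have used $\int_0^s (e^{-u}-1)/u\,\dd u = -\gamma - \log s - E_1(s)$.

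The key observation is then that the remaining integrand is an exact derivative:
\[
\frac{\dd}{\dd s}\,e^{-E_1(s)} \;=\; -E_1'(s)\,e^{-E_1(s)} \;=\; \frac{e^{-s}}{s}\,e^{-E_1(s)}.
\]
Therefore
\[
\E[1/W] \;=\; e^{-\gamma}\int_0^\infty \frac{e^{-s - E_1(s)}}{s}\,\dd s \;=\; e^{-\gamma}\bigl[e^{-E_1(s)}\bigr]_0^\infty \;=\; e^{-\gamma},
\]
using $E_1(s) \to 0$ as $s \to \infty$ and $E_1(s) \to +\infty$ as $s \to 0^+$. Combined with the coupling bound this yields $I_k = e^{-\gamma} + O(2^{-k})$. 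The only non-routine step is recognising $e^{-s - E_1(s)}/s$ as $(\dd/\dd s)\,e^{-E_1(s)}$; the probabilistic rewriting, Campbell's formula, and the coupling are all standard.
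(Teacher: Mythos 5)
Your proposal is correct, and it reaches the limit by a genuinely different mechanism than the paper. You share the paper's starting point (writing $I_k=\E[1/W_k]$ with $W_k=1+U_1+U_1U_2+\cdots+U_1\cdots U_k$, which is exactly the paper's $S_k(1)$), but from there the paper works with the recursive distributional equation $S_\infty\overset{(d)}{=}1+US_\infty$: this fixed-point identity drives both its error estimate (a Lipschitz coupling of $S_{n-1}$ applied to $1+c_nU_n$ versus an independent copy of $S_\infty$) and its evaluation of the limit, via the integral equation $te^t\phi(t)=\int_0^t\phi(v)\,\dd v$ for the Laplace transform, Euler's integral for $\gamma$, and the limit $I_\infty=\lim_{t\to\infty}te^t\phi(t)$. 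You bypass the fixed-point equation entirely: your error bound is the elementary monotone truncation $0\le \E[1/W_k]-\E[1/W]\le\E[W-W_k]=2^{-k}$, and you identify $\E[1/W]$ by the explicit Poisson-process representation $W-1=\sum_j e^{-T_j}$ and Campbell's formula, obtaining the closed form $\E[e^{-s(W-1)}]=e^{-\gamma-E_1(s)}/s$ and then evaluating $\int_0^\infty\E[e^{-sW}]\,\dd s$ exactly by recognising $e^{-s-E_1(s)}/s$ as $\frac{\dd}{\dd s}e^{-E_1(s)}$ --- no Tauberian step, no limit along the transform. (Your computation is consistent with the paper's, since its $\log\phi(x)=-\log x-x-\gamma+o(1)$ is precisely your formula with $E_1(x)=o(1)$; both are really the Laplace transform of the Dickman distribution, which the paper makes explicit in its second proof.) What each approach buys: yours is shorter and gives an exact closed form, at the cost of importing Campbell's theorem; the paper's recursive-equation argument is more self-contained and, crucially, its coupling error bound handles the general integrals $I_j(c_j)$ with an arbitrary nonnegative sequence $c_j$ (Theorem 5.1), which is what the paper needs for its probabilistic proof of Proposition 4.1 --- your truncation bound as written covers only the $c=1$ case, which is all that Theorem 1.4 itself requires.
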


Our approach to \Cref{thm:iteratedIk} is self-contained and relies on a probabilistic reformulation of the problem, which turns out to be related to the Dickman--Goncharov distribution. See the extensive survey of Molchanov and Panov \cite{Molchanov} for background on this distribution, which appears across probability and theoretical computer science \cite{HT2002}.

\subsection{Proof of monotonicity results}
Here we quickly deduce Theorem~\ref{thm:monotonic} assuming Theorems~\ref{thm:fkthm} and \ref{thm:fkck22k}: Indeed, for large $k$ we have
\begin{align*}
f_k - f_{k-1} &= \frac{\log 2}{4} d \Big((k-1)^2/2^{k-1} - k^2/2^k \Big) \ + \ o(k^2/2^k) \\
&= \frac{\log 2}{4} d k^2/2^k + o(k^2/2^k) \ > \ 0
\end{align*}
by Theorem~\ref{thm:fkthm}. Similarly, by Theorem~\ref{thm:fkck22k} we have
\begin{align*}
f_{k-1,y} - f_{k,y} &= c_yd_y \Big(1/2^{k-1} - 1/2^k \Big) \ + \ o(1/2^k) \\
&= c_yd_y/2^k + o(1/2^k) \ > \ 0.
\end{align*}
Here we used $c_y>0$ for $y\ge2$. Indeed, $c_y$ from \eqref{def:cy} is clearly increasing in $y\ge2$, so $c_y\ge c_2>0$ since
\begin{align*}
\sum_{p>2}\frac{\log p}{(p-1)(p-2)} < .9 < 1.2 < \log 2 + \gamma.
\end{align*}
This completes the proof of Theorem~\ref{thm:monotonic}.

\subsection{Proof method: permutations}
As a short illustration of the proof method, we consider a permutation analogue of the sums $f_k$. This result also may be of independent interest. Namely, define $f_{k,\pi}$ by
\begin{align*}
f_{k,\pi}:=\sum_{m \ge 1} \frac{a_{m,k}}{m}
\end{align*}
where $a_{m,k}$ is the probability that a permutation, chosen uniformly at random from $S_m$, has exactly $k$ disjoint cycles. 
\begin{remark}
To see the analogy with the original sum, given a permutation $\pi \in S_m$ let $C(\pi)$ be the number of disjoint cycles in $\pi$, $d(\pi)$ be $m$ and $|\pi|$ be $|S_m|=m!$. The sum $f_{k,\pi}$ may be expressed as
\[ f_{k,\pi}=\sum_{\substack{C(\pi)=k\\\pi \in \cup_{m \ge 1} S_m}} \frac{1}{|\pi| d(\pi)}.\]
The weight $1/(|\pi| d(\pi))$ is analogous to $1/(n\log n)$. 
\end{remark}
Recall the Riemann zeta function $\zeta(s)=\sum_{n \ge 1}n^{-s}$.

\begin{proposition}
For any $k \ge 1$ we have
	\[f_{k,\pi}= \zeta(k+1)=1+2^{-k-1}+O(3^{-k}).\]
\end{proposition}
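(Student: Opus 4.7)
The plan is to recognize $a_{m,k}$ as $|s(m,k)|/m!$, where $|s(m,k)|$ is the unsigned Stirling number of the first kind counting permutations of $m$ elements with exactly $k$ cycles, and to exploit the classical exponential generating function
\[\sum_{m\ge k} \frac{|s(m,k)|}{m!}x^m = \frac{(-\log(1-x))^k}{k!}.\]
First I would write $1/m = \int_0^1 x^{m-1}\,\dd x$ and interchange sum and integral (justified by non-negativity) to obtain
\[f_{k,\pi} = \int_0^1 \frac{1}{x}\cdot \frac{(-\log(1-x))^k}{k!}\,\dd x.\]

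Next I would substitute $u=-\log(1-x)$, so that $x = 1-e^{-u}$ and $\dd x = e^{-u}\,\dd u$, transforming the integral into
\[f_{k,\pi} = \int_0^{\infty} \frac{u^k}{k!} \cdot \frac{e^{-u}}{1-e^{-u}}\,\dd u = \frac{1}{k!}\int_0^{\infty} \frac{u^k}{e^u - 1}\,\dd u.\]
The right-hand side is the standard Bose--Einstein integral representation of $\zeta(k+1)\Gamma(k+1)$, obtained by expanding $(e^u-1)^{-1} = \sum_{n\ge 1} e^{-nu}$ and integrating term by term. This immediately yields $f_{k,\pi} = \zeta(k+1)$.

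Finally, for the asymptotic I would simply truncate the Dirichlet series: $\zeta(k+1) = 1 + 2^{-k-1} + \sum_{n \ge 3} n^{-k-1}$, and bound the tail by comparing with a geometric series or an integral, $\sum_{n \ge 3} n^{-k-1} \le 3^{-k-1} + \int_3^\infty t^{-k-1}\,\dd t = O(3^{-k})$.

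There is essentially no obstacle here: every step is standard. The only point worth verifying carefully is the interchange of summation and integration in the first step, which follows from Tonelli since all terms are non-negative, and the convergence of the Bose--Einstein integral, which is immediate from $u^k e^{-u}/(1-e^{-u})$ being integrable on $(0,\infty)$ for every $k\ge 1$. This short computation serves as a clean template for the singularity-analysis approach used to prove Theorems \ref{thm:fkthm} and \ref{thm:fkck22k}, in which $1/\log n$ is likewise represented by an integral and the relevant generating function is analyzed near its dominant singularity.
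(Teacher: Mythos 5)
Your proposal is correct and follows essentially the same route as the paper: the identity $\sum_{m}a_{m,k}u^m=(-\log(1-u))^k/k!$ (which the paper derives via the exponential formula rather than quoting the Stirling-number generating function), the integral representation $1/m=\int_0^1 u^{m-1}\,\dd u$, the substitution $u=1-e^{-x}$, and Riemann's integral formula for $\zeta(k+1)$, followed by a trivial tail bound for the asymptotic. No further comment is needed.
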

\begin{proof}
Consider the exponential generating function of $\sum_{\pi \in S_n} z^{C(\pi)}$:
	\begin{align*}
		F(u,z):=\sum_{m\ge 0} \frac{1}{m!} \left(\sum_{\pi \in S_m} z^{C(\pi)}\right)u^m 
		= 1+\sum_{k\ge1} z^k\sum_{m \ge 1} a_{m,k}u^m.
	\end{align*}
	The exponential formula for permutations shows
	\begin{align}\label{eq:expformpermut}
		F(u,z)=\exp\left(\sum_{n \ge 1}z \frac{u^n}{n}\right)=\exp\left(-z\log(1-u)\right).
	\end{align}
	The Taylor series for $\exp(-z\log(1-u))$ in $z$ is 
	\[ \exp\left(-z\log(1-u)\right)= \sum_{k \ge0} \frac{z^k}{k!}\left(-\log(1-u)\right)^k,\]
	so extracting the coefficient of $z^k$ in $F(u,z)$ yields, by \eqref{eq:expformpermut},
	\begin{equation}\label{eq:zk coeff}
		\sum_{m \ge 1}a_{m,k}u^m = \frac{\left(-\log (1-u)\right)^k}{k!}.
	\end{equation}
Since $1/m = \int_0^1 u^{m-1}\,\dd{u}$, we can integrate \eqref{eq:zk coeff} over $u\in[0,1]$ to obtain
	\[f_{k,\pi}=\sum_{m \ge 1 }\frac{a_{m,k}}{m}
	= \int_{0}^{1}\frac{\left(-\log(1-u)\right)^k}{k!}\frac{\dd u}{u} 
	= \int_{0}^{\infty} \frac{x^k}{\Gamma(k+1)} \frac{\dd x}{e^x-1}
	= \zeta(k+1).\]
	This completes the proof. Here in the third equality we performed the change of variables $u=1-e^{-x}$, and in the fourth we applied Riemann's famous integral representation for $\zeta(s)$, with $s=k+1$,
	\[ \Gamma(s)\zeta(s)=\int_{0}^{\infty}\frac{x^{s-1}}{e^x-1}\dd x.\]
\end{proof}
\begin{remark}
In the same way that we shall relate $f_k$ to the sequence of integrals $I_k$ in Theorem~\ref{thm:iteratedIk}, one can relate $f_{k,\pi}$ to $I_k$ as well.
\end{remark}
\section{Sifted sums}
\subsection{Preparation}
We now collect a few analytic properties of the generating functions that will play central roles in the proof of Theorem~\ref{thm:fkck22k}. Let us start with the function
\begin{align}\label{def:Fsz}
F(s,z) := \sum_{n\ge 1} \frac{z^{\Omega(n)}}{n^s} = \prod_p \Big(1-\frac{z}{p^s}\Big)^{-1}, \qquad s \in \mathbb{R}.
\end{align}
\begin{lemma}\label{lem:cont}
The function $F(s,z)$ converges absolutely for $s>1$ and $|z|<2$. In this domain $F$ has a power series representation in $z$ and it defines a smooth function in $s$.\footnote{In this paper we will only need first and second derivatives with respect to $s$, which is always seen as a real-valued variable.}
\end{lemma}
\begin{proof}
Observe that for $s \ge 1+\varepsilon$ and $|z| \le 2^{-\varepsilon}$ the $p$th factor in \eqref{def:Fsz} is
\[\Big(1-\frac{z}{p^s}\Big)^{-1} = \exp\left(\sum_{i \ge 1}\frac{z^i}{ip^{si}}\right) = \exp\left(O_{\varepsilon}(|z|p^{-s})\right)\]
and so the product in \eqref{def:Fsz} is a uniform limit of power series in $z$ which are smooth in $s$.
\end{proof}
For $s>1$ and $|z|<2$ we define
\begin{align*}
	G(s,z) := F(s,z) (s-1)^z.
\end{align*}
The following lemma extends the range of definition of $G$. For any $y \ge 1$ denote by $y_1$ the smallest prime greater than $y$. 
\begin{lemma}
For every prime $q$, $\prod_{p \le q}(1-z/p^s)G(s,z)$, as well as its derivatives in $s$, are smooth in $s \ge 1$ and have a power series expansion in $z$ with radius $q_1$. Moreover, for $s \ge 1$ and $k \ge 0$, the $k$th derivative of $G(s,z)$ with respect to $s$ has a meromorphic continuation to $z \in \CC$ with poles of order $k$ at $z=p^s$ for every prime $p$. This continuation satisfies
\[ G(1,z) = \prod_{p} \left(1-\frac{z}{p}\right)^{-1} \left(1- \frac{1}{p}\right)^z.\]
\end{lemma}
\begin{proof}
We write $G$ as 
\begin{equation}\label{eq:GF} 
	G(s,z) = F(s,z) \zeta(s)^{-z}  (\zeta(s)(s-1))^z .
\end{equation}
It is well known that $\lim_{s \to 1^+}\zeta(s)(s-1)=1$, and that extending $\zeta(s)(s-1)$ to $s=1$, by setting it to equal $1$ there, it is a smooth function in $s \ge 1$.\footnote{Throughout we identify $\zeta(s)(s-1)$ at $s=1$ with $1$. A function $f$ is smooth on $[1,\infty)$ if it belongs to $\cap_{k=0}^{\infty}C^k([1,\infty))$ where for $k \ge 1$, $C^k([1,\infty)):=\{ f\colon [1,\infty) \to \mathbb{R}, f \text{ is differentiable}, f' \in C^{k-1}([1,\infty))\}$ and differentiability at $1$ is defined via the right derivative. For $k=0$, $C^0([1,\infty))$ consists of continuous functions on $[1,\infty)$, with continuity at $1$ defined by right-continuity.} Hence, $(\zeta(s)(s-1))^z=\exp(z \log (\zeta(s)(s-1))$ is smooth in $s \ge 1$ and has a power series representation in $z$ with infinite radius of convergence:
\begin{equation}\label{eq:inf} (\zeta(s)(s-1))^z = \sum_{i \ge 0} \frac{z^i}{i!}(\log (\zeta(s)(s-1)))^i.
\end{equation}
At $s=1$, \eqref{eq:inf} is equal to $1$. It remains to consider  $F(s,z)\zeta(s)^{-z}$. It has the following Euler product:
\begin{equation}\label{eq:FE}
F(s,z)\zeta(s)^{-z} = \prod_{p} \left( 1-\frac{z}{p^s}\right)^{-1} \left(1-\frac{1}{p^s}\right)^z.
\end{equation}
For every prime $q$, we can use the product in \eqref{eq:FE} to define $F(s,z)\zeta(s)^{-z}$ as a product of the rational function $\prod_{p \le q}(1-z/p^s)^{-1}$ (which has simple poles in $z$ at $z=p^s$ for every prime $p \le q$, and is defined for $s\ge 1$) with a function that has a power series representation in $z$ with radius of convergence $q_1$ and is defined for $s \ge 1$.  This is because if $|z| < q_1^{1-\varepsilon}$  and $p>q$ then the $p$th term in the right-hand side of \eqref{eq:FE} equals
\[\exp\left( \sum_{i=2}^{\infty} 
 \frac{z^i-z}{ip^{si}}\right) = \exp \left(O_{\varepsilon}\left((|z|^2+|z|)p^{-2 s}\right)\right),\] and the product over $p>q$ converges absolutely and uniformly. 
\end{proof}
We also introduce
\begin{align}\label{def:Fszy}
F_y(s,z) := \sum_{\substack{n \ge 1\\ p\mid n\Rightarrow p> y}} \frac{z^{\Omega(n)}}{n^s} = \prod_{p >y} \left(1-\frac{z}{p^s}\right)^{-1} = \ \prod_{p \le y} \left(1-\frac{z}{p^s}\right)\,F(s,z),
\end{align}
and 
\begin{equation}\label{def:Gszy}
\begin{split}
G_y(s,z) &:= \prod_{p \le y} \left(1-\frac{z}{p^s}\right)\,G(s,z) \\
&= \prod_{p\le y}\Big(1-\frac{1}{p^s}\Big)^{z}\prod_{p>y} \Big(1-\frac{z}{p^s}\Big)^{-1}\Big(1-\frac{1}{p^s}\Big)^{z}(\zeta(s)(s-1))^z.
\end{split}
\end{equation}
For any smooth function $H(s,z)$, we shall denote by $H^{(a,b)}$ the mixed partial derivative \[H^{(a,b)} =\frac{\partial^{a+b}}{\partial s^a \partial z^b} H.\]
For every $j \ge 0$ and fixed $s\ge 1$, $G^{(j,0)}_y(s,z)$ has a meromorphic continuation to $\CC$ with poles at $z=p^s$ for every prime $p>y$ by Lemma~\ref{lem:cont}. Taking the logarithmic derivative of \eqref{def:Gszy} with respect to $s$ gives
\begin{align*}
\frac{G_y^{(1,0)}}{G_y}(s,z)=&\frac{\partial}{\partial s}\big[\log G_y(s,z)\big]
=\frac{\partial}{\partial s}
\bigg[\sum_{p\le y}z\log\Big(1-\frac{1}{p^s}\Big)\\
&+\sum_{p\ge y_1}\bigg(z\log\Big(1-\frac{1}{p^s}\Big)- \log\Big(1-\frac{z}{p^s}\Big)\bigg) + z\log \big(\zeta(s)(s-1)\big)\bigg]
\end{align*}
so that
\begin{lemma}
We have
\begin{align}\label{eq:dlogGy}
G_y^{(1,0)}(s,z)& = z G_y(s,z)\left(\sum_{p \le y}\frac{\log p}{p^s-1} -\sum_{p\ge y_1}\frac{(z-1)\log p}{(p^s-1)(p^s-z)} + (\log (\zeta(s)(s-1)))'\right).
\end{align}
\end{lemma}

\begin{remark}
These generating functions and their values have natural connections to related questions about $k$-almost primes \cite{BKL,Ldissect}. In particular $G_2(1,2)=\tfrac{1}{4}\prod_{p>2}(1-\tfrac{2}{p})^{-1}(1-\tfrac{1}{p})^2 = d$ as in \eqref{def:d} equals $2\beta_2$, from the main term of \cite[Theorem~1.2]{BKL}.
\end{remark}

\subsection{Proof of Theorem \ref{thm:fkck22k}}\label{sec:proof}
Let $y \ge 1$, and let $y_1$ be the smallest prime greater than $y$. We have the integral representation
\begin{equation}\label{eq:intrep}
f_{k,y}=\sum_{\substack{\Omega(n)=k\\ p\mid n\Rightarrow p> y}} \frac{1}{n \log n}=\int_{1}^{\infty} \sum_{\substack{\Omega(n)=k\\p\mid n\Rightarrow p> y}} n^{-s} \dd s.
\end{equation}
The smallest number $n$ with $\Omega(n) = k$ and all prime factors greater than $y$ is $y_1^k$, so the contribution of $s \ge 2$ to \eqref{eq:intrep} is
\[ \int_{2}^{\infty} \sum_{\substack{\Omega(n)=k\\ p\mid n\Rightarrow p> y} }n^{-s} \dd s  = \sum_{\substack{\Omega(n)=k\\ p\mid n\Rightarrow p> y}} \frac{1}{n^{2}\log n}  \le \sum_{n \ge y_1^k} \frac{1}{n^{2}\log n} \ll \frac{1}{k} \int_{y_1^k - 1}^{\infty} \frac{dt}{t^{2}} \ll  \frac{1}{k y_1^k}. \]
Thus we have
\begin{equation}\label{eq:fkyiky}
	f_{k,y} = I_{k,y} + O\left(\frac{1}{k y_1^{k}}\right),
\end{equation}
where $I_{k,y}$ is the corresponding integral over $s\in[1,2]$, namely,
\begin{equation}\label{def:Iky}
	I_{k,y} := \int_{1}^{2} \sum_{\substack{\Omega(n)=k\\ p\mid n\Rightarrow p> y}} n^{-s}\dd s
	=\int_{1}^{2}\frac{1}{k!} F_y^{(0,k)}(s,0)\dd s.
\end{equation}
Here we term-wise differentiated $F_y(s,z)$ in \eqref{def:Fszy} with respect to $z$, for $s > 1$.

Next from the Taylor series
\begin{equation}\label{eq:s1z}
(s-1)^{-z} = \exp\left(-z\log(s-1)\right) = \sum_{i \ge0} \frac{\left(-\log(s-1)\right)^i}{i!}z^i,
\end{equation}
we apply the product rule to $F_y(s,z)=(s-1)^{-z}\,G_y(s,z)$, giving
\[ \frac{1}{k!}F_y^{(0,k)}(s,0) = \sum_{i=0}^k \frac{\left(-\log(s-1)\right)^{k-i}}{(k-i)!} \frac{1}{i!} G_y^{(0,i)}(s,0).\]

Thus \eqref{def:Iky} becomes
\begin{equation}\label{eq:Ikintegral}
	I_{k,y} =\sum_{i=0}^{k} \int_{1}^{2} \frac{\left(-\log(s-1)\right)^{k-i}}{(k-i)!} \frac{1}{i!}G_y^{(0,i)}(s,0) \dd s.
\end{equation}
Now we introduce a similar integral $I'_{k,y}$, given by evaluating $G^{(0,i)}(s,0)$ in the integrand at $s=1$, namely,
\begin{equation}\label{eq:Ik1integral}
	I'_{k,y} :=\sum_{i=0}^{k} \frac{1}{i!}G_y^{(0,i)}(1,0) \int_{1}^{2} \frac{\left(-\log(s-1)\right)^{k-i}}{(k-i)!} \dd s.
\end{equation}
To handle $I'_{k,y}$, we substitute $s=1+e^{-t}$ and obtain, for any $j\ge 0$,
\begin{equation}\label{eq:gamma}
 \int_{1}^{2} \frac{\left(-\log(s-1)\right)^{j}}{j!}\dd s = \int_{0}^{\infty} \frac{e^{-t}t^{j}}{j!}\dd t = \frac{\Gamma(j+1)}{j!}=1,
\end{equation}
as we are evaluating the Gamma function at $j+1$. Hence \eqref{eq:Ik1integral} simplifies as
\begin{equation}\label{eq:Ik1simpler}
	I'_{k,y} = \sum_{i=0}^{k}  \frac{1}{i!}G_y^{(0,i)}(1,0).
\end{equation}
In the upcoming subsections, we shall estimate $I_{k,y}$ by means of the following lemmas for $I'_{k,y}$ and $I_{k,y}-I'_{k,y}$. 
\begin{lemma}\label{lem:I1minus1y}
	Let $y \ge 1$. We have $I'_{k,y}= G_y(1,1) + O_y(y_1^{-k})$.
\end{lemma}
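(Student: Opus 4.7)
The plan is to identify $I'_{k,y}$ from \eqref{eq:Ik1simpler} as the partial sum up to degree $k$ of the Taylor expansion of $z \mapsto G_y(1,z)$ about $z=0$, evaluated at $z=1$. The preparation section established that this Taylor series has radius of convergence at least $y_1$; since $1 < y_1$, the full series converges absolutely to $G_y(1,1)$, and hence
\[
G_y(1,1) - I'_{k,y} = \sum_{i > k} [z^i]\, G_y(1,z).
\]
The task reduces to estimating this tail by $O_y(y_1^{-k})$.

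A direct application of Cauchy's formula on a circle $|z| = y_1 - \varepsilon$ would only yield $[z^i]\,G_y(1,z) \ll_y (y_1-\varepsilon)^{-i}$, which is too lossy to sum to the required bound. To recover the sharp $y_1^{-i}$ rate, I would exploit a further fact from the preparation: the function $(1 - z/y_1)\,G_y(1,z)$ admits a power-series expansion in $z$ with radius of convergence at least $y_2$, the smallest prime exceeding $y_1$. Writing this function as $A(z) = \sum_{j \ge 0} a_j z^j$, Cauchy's bound in the disk $|z| < y_2$ gives $|a_j| \ll_y r^{-j}$ for any fixed $r \in (y_1, y_2)$. Expanding the identity $G_y(1,z) = A(z)/(1 - z/y_1)$ and collecting coefficients yields
\[
[z^i]\, G_y(1,z) \;=\; y_1^{-i}\Bigl(A(y_1) - \sum_{j > i} a_j y_1^j\Bigr),
\]
where the residual sum is bounded by $\sum_{j > i}(y_1/r)^j \ll_y (y_1/r)^i$. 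Summing over $i > k$ then gives the desired
\[
\sum_{i > k} [z^i]\, G_y(1,z) \;=\; \frac{A(y_1)}{y_1 - 1}\, y_1^{-k} + O_y(r^{-k}) \;=\; O_y(y_1^{-k}).
\]

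The only real subtlety is isolating the pole at $z = y_1$ cleanly enough to avoid a spurious $(y_1 - \varepsilon)^{-k}$ loss in the coefficient bound; the meromorphic factorisation recorded in the preparation section, which converts the simple pole at $z = y_1$ into honest analyticity in the strictly larger disk $|z| < y_2$, is precisely what makes the optimal rate drop out. No new information about $\zeta(s)$ or complex-analytic input beyond the Euler-product bookkeeping already set up in the preparation is needed.
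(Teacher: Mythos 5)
Your proof is correct, and it reaches the estimate by a route that is close in spirit but mechanically different from the paper's. The paper writes $I'_{k,y}=[z^k]\,G_y(1,z)/(1-z)$, applies Cauchy's integral formula on $|z|=1/2$, and then shifts the contour to $|z|=R:=y_1+1/2$: the main term $G_y(1,1)$ appears as the residue at $z=1$, the residue at $z=y_1$ contributes $O_y(y_1^{-k})$, and the large-circle integral is controlled by an explicit Euler-product bound $\max_{|z|=R}|G_y(1,z)|\ll_y 1$. You instead view $I'_{k,y}$ as the $k$-th partial sum of the Taylor series of $G_y(1,z)$ evaluated at $z=1$ (legitimate, since the radius of convergence is $y_1>1$) and estimate the tail $\sum_{i>k}[z^i]G_y(1,z)$ by peeling off the dominant pole: the factorisation $(1-z/y_1)G_y(1,z)=\prod_{p\le y_1}(1-z/p)\,G(1,z)=A(z)$, analytic in $|z|<y_2$, is exactly the fact recorded in the preparation, and your coefficient identity $[z^i]G_y(1,z)=y_1^{-i}\bigl(A(y_1)-\sum_{j>i}a_jy_1^j\bigr)$ with $|a_j|\ll_y r^{-j}$ for fixed $r\in(y_1,y_2)$ gives $[z^i]G_y(1,z)=y_1^{-i}A(y_1)+O_y(r^{-i})$, whence the tail is $O_y(y_1^{-k})$ after a geometric summation. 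What your version buys is elementarity: no residue theorem or contour shift is needed, and the bound on $A$ follows from mere continuity on a compact circle inside its disc of analyticity, so the paper's Euler-product estimate \eqref{eq:maxGz=R} is dispensed with; what the paper's formulation buys is that the identical contour-shift template is reused later for the weighted sums such as \eqref{eq:zkGy101z} and \eqref{eq:B1}, whereas your tail argument would need to be redone with the extra factors $2^{i}$ or $G_y^{(1,0)}$ in place. Both arguments ultimately rest on the same input, namely the meromorphic structure of $G_y(1,z)$ with nearest pole at $z=y_1$ and analyticity of $(1-z/y_1)G_y(1,z)$ up to $|z|<y_2$.
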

\begin{lemma}\label{lem:IminusI2y}
	Let $y \ge 2$. We have $I_{k,y}=I'_{k,y} + G_y^{(1,0)}(1,2)/2^{k+1}+ O_y(k^3/3^{k})$.
\end{lemma}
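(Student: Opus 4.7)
The plan is to unify $I_{k,y}$ and $I'_{k,y}$ into a single $z^k$-coefficient extraction, and then Taylor expand $G_y$ in $s$ around $s=1$ to isolate the pole at $z=2$ that produces the main term.

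Using the factorization $F_y(s,z)=(s-1)^{-z}G_y(s,z)$ together with the identity $\int_1^2(s-1)^{-z}\dd s=1/(1-z)$ (obtained via $s=1+e^{-t}$ and read off as a formal power-series identity in $z$), one has $I'_{k,y}=[z^k]\,G_y(1,z)/(1-z)=\int_1^2[z^k]\big((s-1)^{-z}G_y(1,z)\big)\dd s$, and hence
\[I_{k,y}-I'_{k,y}=\int_1^2[z^k]\big((s-1)^{-z}\big(G_y(s,z)-G_y(1,z)\big)\big)\,\dd s.\]
Next I would Taylor expand $G_y(s,z)-G_y(1,z)=(s-1)G_y^{(1,0)}(1,z)+R(s,z)$, with integral remainder $R(s,z)=\int_1^s(s-u)G_y^{(2,0)}(u,z)\dd u=O((s-1)^2)$ uniformly on compact subsets of $|z|<y_1$. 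Integrating the linear term, using $\int_1^2(s-1)^{1-z}\dd s=1/(2-z)$, produces $[z^k]\,G_y^{(1,0)}(1,z)/(2-z)$. Separating the simple pole at $z=2$,
\[[z^k]\frac{G_y^{(1,0)}(1,z)}{2-z}=\frac{G_y^{(1,0)}(1,2)}{2^{k+1}}+[z^k]\frac{G_y^{(1,0)}(1,z)-G_y^{(1,0)}(1,2)}{2-z},\]
and the last function is analytic at $z=2$ with its nearest singularity at $z=y_1\ge 3$ (a double pole, since $s$-differentiating a simple pole of $G_y$ yields at worst a double pole), contributing $O_y(k/3^k)$ by a Cauchy bound on $|z|=y_1-1/k$.

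The main obstacle is showing $\int_1^2[z^k]\big((s-1)^{-z}R(s,z)\big)\dd s=O_y(k^3/3^k)$. Applying Cauchy's formula on a circle $|z|=r$ and using $|(s-1)^{-z}|\le(s-1)^{-r}$ on the contour, the task reduces to estimating $r^{-k}\int_1^2(s-1)^{-r}\max_{|z|=r}|R(s,z)|\dd s$. The bottleneck is $y=2$: here $G_y^{(2,0)}(u,z)$ has a triple pole at $z=3^u$, and the crude bound $|R(s,z)|\ll(s-1)^2(3-r)^{-3}$ yields only $O(k^4/3^k)$ even after optimizing $r=3-O(1/k)$. To save the extra factor of $k$, I would decompose $G_y(s,z)=A_y(s)/(z-y_1^s)+\tilde G_y(s,z)$, where $\tilde G_y$ is analytic on a disk strictly containing $|z|\le y_1$, and apply the Taylor step to the two pieces separately. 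The regular piece contributes $O((s-1)^2)$ uniformly on the contour, while clearing denominators shows the singular piece is a sum of terms of the form $(s-1)^2/[(z-y_1)^a(z-y_1^s)^b]$ with $a+b\le 3$; the crucial observation is that on $|z|=r$ one has $|z-y_1^s|\ge (y_1^s-y_1)+(y_1-r)\asymp (s-1)+(y_1-r)$, so the extra factor $(s-1)+(y_1-r)$ in the denominator, when integrated against $(s-1)^{-r}$ over $s\in[1,2]$, saves a full power of $(y_1-r)^{-1}$ relative to the crude bound. Optimizing $r=y_1-c/k$ then yields the claimed $O_y(k^3/3^k)$.
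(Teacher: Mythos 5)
Your reduction and your extraction of the main term are sound and essentially parallel the paper's argument: writing $I_{k,y}-I'_{k,y}=\int_1^2[z^k]\big((s-1)^{-z}(G_y(s,z)-G_y(1,z))\big)\,\dd s$, Taylor expanding in $s$ at $s=1$, and pulling out $G_y^{(1,0)}(1,2)/2^{k+1}$ from $[z^k]\,G_y^{(1,0)}(1,z)/(2-z)$ is exactly what the paper does (there coefficientwise, via $\int_1^2(s-1)\tfrac{(-\log(s-1))^{j}}{j!}\dd s=2^{-j-1}$ and the residue theorem). The gap is in the step you yourself flag as the bottleneck, the quadratic remainder $R(s,z)$. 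The inequality $\big|[z^k]\big((s-1)^{-z}R(s,z)\big)\big|\le r^{-k}(s-1)^{-r}\max_{|z|=r}|R(s,z)|$ is where the spurious factor of $k$ enters, and your ``crucial observation'' does not remove it. With $w=s-1$, $\delta=y_1-r$ and $y=2$ (so $r=3-\delta$), your refined bound for the worst singular terms is $\ll w^2/\big(\delta^a(w+\delta)^b\big)$ with $a+b=3$, $b\ge1$, and the relevant $s$-integral satisfies
\[
\int_0^1 \frac{w^{2-r}}{\delta^a (w+\delta)^b}\,\dd w \;=\; \frac{1}{\delta^a}\int_0^1 \frac{w^{-1+\delta}}{(w+\delta)^b}\,\dd w \;\asymp\; \frac{1}{\delta^{a+b+1}},
\]
as the substitution $w=\delta v$ and the Beta integral $\int_0^\infty v^{-1+\delta}(1+v)^{-b}\dd v\asymp \delta^{-1}$ show. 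In particular for $b=1$ the integral is $\asymp\delta^{-2}$, not $\delta^{-1}$: the mass of $w^{-1+\delta}$ sits at exponentially small $w$ (one needs $w\le 2^{-1/\delta}$ to capture half of it), where $w+\delta\asymp\delta$ and the lower bound $|z-y_1^s|\gtrsim(s-1)+(y_1-r)$ gives nothing. So your refined estimate is still $\asymp r^{-k}\delta^{-4}$, and optimizing $\delta\asymp 1/k$ returns $O(k^4/3^k)$ — the same order as the crude bound, and short of the stated $O_y(k^3/3^k)$ (it is still $o(2^{-k})$, so it would suffice for the applications in Theorem \ref{thm:fkck22k} and Theorem \ref{thm:monotonic}, but it does not prove the lemma as stated).

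The loss is intrinsic to bounding $(s-1)^{-z}$ by its maximum on the contour, and the fix is to avoid that step altogether, which is what the paper does: keep the convolution structure $[z^k]\big((s-1)^{-z}R(s,z)\big)=\sum_{i\le k}\tfrac{(-\log(s-1))^{k-i}}{(k-i)!}[z^i]R(s,\cdot)$, prove the coefficient bound $[z^i]G_y^{(2,0)}(s,z)\ll_y (i+1)^2/y_1^{i}$ uniformly in $s\in[1,2]$ (the triple pole at $z=y_1^s$, handled by writing $G_y^{(2,0)}(s,z)=(1-z/y_1^s)^{-3}H_y(s,z)$ with $H_y$ analytic in $|z|<y_2$, exactly as in \eqref{eq:cibound}), and then evaluate the $s$-integral exactly, $\int_1^2\tfrac{(-\log(s-1))^{k-i}}{(k-i)!}(s-1)^2\dd s=3^{i-k-1}$, so that the total error is $\ll\sum_{i\le k}(i+1)^2 y_1^{-i}3^{i-k}\ll_y k^3/3^k$. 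Your first half can be kept verbatim; to reach $k^3/3^k$ you should replace the single-contour estimate for $(s-1)^{-z}R(s,z)$ by this coefficientwise treatment (equivalently, integrate in $s$ before summing over the $z$-coefficients), since the exact coefficients $(-\log(s-1))^j/j!$ of $(s-1)^{-z}$ are much smaller near $s=1$ than the max-modulus bound $(s-1)^{-r}r^{-j}$ pretends.
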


\begin{proof}[Proof of Theorem \ref{thm:fkck22k} assuming Lemmas \ref{lem:I1minus1y} and \ref{lem:IminusI2y}]
Recalling \eqref{eq:fkyiky}, we have
\begin{equation}\label{eq:fkyGy}
\begin{split}
f_{k,y} & = I_{k,y} + O(1/(k y_1^{k})) \\
& = I'_{k,y} + G_y^{(1,0)}(1,2)/2^{k+1}+ O_y(k^3/3^{k}) \\
& = G_y(1,1)  + G_y^{(1,0)}(1,2)/2^{k+1}+ O_y(k^3/3^{k})
\end{split}
\end{equation}
for $y \ge 2$. To compute the constants above, we first note $G_y(1,1) = \prod_{p\le y}(1-1/p)$. Next, by \eqref{def:Gszy} and \eqref{eq:dlogGy} with $(s,z)=(1,2)$ we have
\begin{equation}\label{eq:Gy10}
\begin{split}
G_y(1,2)&=\prod_{p \le y} \left(1-\frac{1}{p}\right)^2 \prod_{p \ge y_1} \left(1-\frac{2}{p}\right)^{-1} \left(1-\frac{1}{p}\right)^2 = d_y, \\
G_y^{(1,0)}(1,2) &=2G_y(1,2) \left(\sum_{p \le y}\frac{\log p}{p-1} -\sum_{p\ge y_1}\frac{\log p}{(p-1)(p-2)} + \gamma\right) = 2d_yc_y
\end{split}
\end{equation}
for $y \ge 2$. Here we used $(\log (\zeta(s)(s-1)))'|_{s=1}=\gamma$. Hence plugging \eqref{eq:Gy10} and $G_y(1,1) = \prod_{p\le y}(1-1/p)$ back into \eqref{eq:fkyGy}, we conclude
\begin{align}
f_{k,y} & = \prod_{p \le y}\left(1-\frac{1}{p}\right) + c_yd_y/2^{k}+ O_y(k^3/3^{k}).
\end{align}
\end{proof}

\subsection{Proof of Lemma \ref{lem:I1minus1y}}
We use the notation $[z^n]A(z)=\frac{1}{n!} (\frac{d^n}{dz^n}A)(0)$ to denote the coefficient of $z^n$ in $A$, where $A$ is a function with Taylor series representation at $z=0$.
From the representation of $I'_{k,y}$ in \eqref{eq:Ik1simpler}, we have for all $k \ge 1$,
\[ I'_{k,y} =[z^k]  \frac{G_y(1,z)}{1-z} = \frac{1}{2\pi i} \int_{|z|=1/2}\frac{G_y(1,z)}{1-z} \frac{\dd z}{z^{k+1}},\]
using Cauchy's integral formula. Here the integral ranges over a circle centered around $z=0$, oriented counterclockwise, with radius $1/2$.

The function $G_y(1,z)$ has simple poles at $z=p$ for every prime $p>y$; these are its only poles. The rational function $1/(1-z)$ has a simple pole at $z=1$. So recalling the smallest prime $y_1>y$, the only poles of $G_y(1,z)/(1-z)$ in the range $1/2<|z|<y_1+1/2=:R$ occur at $z=1$ and $z=y_1$. Thus by Cauchy's residue theorem,
\begin{align}\label{eq:I1lem3.1}
I'_{k,y}&= \frac{1}{2\pi i} \int_{|z|=R} \frac{G_y(1,z)}{1-z} \frac{\dd z }{z^{k+1}} + G_y(1,1) -\frac{\lim_{z \to y_1} (z-y_1)\,G_y(1,z)}{(1-y_1)y_1^{k+1}}.
\end{align}

Note that $\lim_{z \to y_1} G_y(1,z)(z-y_1) \ll_{y} 1$. Then we claim $|G_y(1,z)|\ll_y 1$ in the integrand of \eqref{eq:I1lem3.1}, from which we conclude
\begin{align}\label{eq:I1lem3.11}
I'_{k,y}&= \int_{|z|=R}\frac{O_y(1)}{R-1}\frac{\dd z }{R^{k+1}} + G_y(1,1) - \frac{O_y(1)}{(1-y_1)y_1^{k+1}}
 \ = \ G_y(1,1) + O_y(y_1^{-k}).
\end{align}
To show this claim, note that if $|z|=R$ and $p>2R$, then
\begin{align*}
\left| \left(1-\frac{1}{p}\right)^z\left(1-\frac{z}{p}\right)^{-1} \right| = \bigg|\exp\bigg( \sum_{i \ge 2} \frac{z^i-z}{ip^i} \bigg)\bigg| \le \exp\bigg( \sum_{i \ge 2} \frac{|z|^i}{p^i} \bigg)\le 
\exp\Big(\frac{2R^2}{p^2}\Big).
\end{align*}
Hence, recalling \eqref{def:Gszy} with $s=1$ we obtain
\begin{align*}
\max_{|z|=R}\left|G_y(1,z)\right| & = \max_{|z|=R} \left| \prod_{p\le y}\left(1-\frac{1}{p}\right)^{z}\prod_{p\ge y_1}\left(1-\frac{1}{p}\right)^z\left(1-\frac{z}{p}\right)^{-1}\right| \\
&\le  \prod_{p \le y} \left(1-\frac{1}{p}\right)^{-R} \prod_{p > 2R} \exp\Big(\frac{2R^2}{p^2}\Big) \, \cdot\max_{|z|=R} \prod_{y_1\le p\le 2R} \left|\left(1-\frac{1}{p}\right)^z\left(1-\frac{z}{p}\right)^{-1}\right| 
\ \ll_y \  1.
\end{align*}
This proves the claim, and hence Lemma~\ref{lem:I1minus1y} follows.

\subsection{Proof of Lemma \ref{lem:IminusI2y}}
By Taylor expansion at $s=1$, we have uniformly for $s \in [1,2]$
\[G_y^{(0,i)}(s,0) = G_y^{(0,i)}(1,0) + (s-1) b_{i} + O\big( (s-1)^{2} c_{i}\big),\]
for coefficients
\begin{equation}\label{eq:bici} 
	b_{i} :=G_y^{(1,i)}(1,0)\qquad\text{and}\qquad
	c_{i} :=\max_{s'\in [1,2]} \left| G_y^{(2,i)}(s',0)\right|.
\end{equation}
Thus subtracting \eqref{eq:Ikintegral} from \eqref{eq:Ik1integral}, we have
\begin{equation}\label{eq:IminusI1}
\begin{split}
	 I_{k,y}-I'_{k,y}   &= \sum_{i=0}^{k} \int_{1}^{2} \frac{\left(-\log(s-1)\right)^{k-i}}{(k-i)!} \frac{1}{i!} \left( G_y^{(0,i)}(s,0) - G_y^{(0,i)}(1,0)\right) \dd s  \\
	& = \sum_{i=0}^{k} \int_0^1 \frac{\left(-\log s\right)^{k-i}}{(k-i)!i!} \left( sb_{i} + O( s^{2} c_{i})\right) \dd s.
 \end{split}
\end{equation}
Substituting $s=e^{-t}$ shows that
\begin{equation}\label{eq:IminusI1bi}
	\begin{split}
		\sum_{i=0}^{k}\frac{b_{i}}{i!(k-i)!}\int_0^1 (-\log s)^{k-i}s \dd s
		&= \sum_{i=0}^{k}\frac{b_i}{i!(k-i)!}\int_{0}^{\infty} t^{k-i}e^{-2t} \dd t  \\
		&=\sum_{i=0}^{k}\frac{b_{i} 2^{i-k-1}}{i!(k-i)!} \int_{0}^{\infty} v^{k-i}e^{-v} \dd v \
		= \ \sum_{i=0}^{k} \frac{b_{i}}{i!} 2^{i-k-1}
	\end{split}
\end{equation}
and similarly
\begin{align}\label{eq:IminusI1ci}
\sum_{i=0}^{k}\frac{c_{i}}{i!(k-i)!} \int_0^1(-\log s)^{k-i} s^2 \dd s = \sum_{i=0}^{k} \frac{c_{i}}{i!}3^{i-k-1}.
\end{align}
Plugging \eqref{eq:IminusI1bi} and \eqref{eq:IminusI1ci} back into \eqref{eq:IminusI1} gives
\begin{align}
I_{k,y}-I'_{k,y} &= \sum_{i=0}^{k}\bigg( \frac{b_{i}}{i!} 2^{i-k-1} + O\Big(\frac{c_{i}}{i!}3^{i-k-1}\Big)\bigg).
\end{align}

So proceeding as in the proof of Lemma~\ref{lem:I1minus1y} (as in \eqref{eq:I1lem3.1}), by Cauchy's integral formula and residue theorem,
\begin{equation}\label{eq:zkGy101z}
\begin{split}
\sum_{i=0}^{k} \frac{b_{i}}{i!} 2^{i-k-1} &= 2^{-1}[z^k] \frac{G_y^{(1,0)}(1,z)}{1-\frac{z}{2}}
= \frac{1}{4\pi i} \int_{|z|=1/2} \frac{G_y^{(1,0)}(1,z)}{1-\frac{z}{2}} \frac{\dd z }{z^{k+1}} \\
&= \frac{1}{2\pi i} \int_{|z|=R} \frac{G_y^{(1,0)}(1,z)}{2-z} \frac{\dd z }{z^{k+1}} + G_y^{(1,0)}(1,2)2^{-k-1} - \frac{\lim_{z \to y_1} (z-y_1)\,G_y^{(1,0)}(1,z)}{(2-y_1)y_1^{k+1}} \\
&= G_y^{(1,0)}(1,2)2^{-k-1} + O_y(y_1^{-k})
\end{split}
\end{equation}
holds where $R:=y_1+1/2$.

Finally, we claim $c_i \ll_y i! (i+1)^2/y_1^i$, in which case \[\sum_{i=0}^{k} \frac{c_{i}}{i!}3^{i-k-1}\ll \sum_{i=0}^{k}y_1^{-i}(i+1)^2 3^{i-k-1} \ll \sum_{i=0}^{k}(i+1)^2\,3^{-k} \ll_y k^3/3^{k}\] since $y_1\ge 3$ (as $y\ge2$). Thus combined with \eqref{eq:zkGy101z}, we conclude
\begin{align}
I_{k,y}-I'_{k,y} = G_y^{(1,0)}(1,2)/2^{k+1} + O_y(k^3/3^{k}).
\end{align}

Hence to complete the proof of Lemma~\ref{lem:IminusI2y}, it suffices to show $c_i \ll_y i! (i+1)^2/y_1^i$, which by definition means that uniformly for $s \in [1,2]$,
\begin{align}\label{eq:cibound}
[z^i] G_y^{(2,0)}(s,z) \ll_y (i+1)^2/y_1^i.
\end{align}
To this end, recall $G_y^{(1,0)}(s,z) = zG_y(s,z) c(s)$ by \eqref{eq:dlogGy}, where
\begin{align*}
c(s)=c_y(s,z) := \sum_{p \le y}\frac{\log p}{p^s-1} -\sum_{p\ge y_1}\frac{(z-1)\log p}{(p^s-1)(p^s-z)} + (\log (\zeta(s)(s-1)))'.
\end{align*}
So differentiating again with respect to $s$ we obtain
\begin{equation}\label{eq:Gy20}
\begin{split}
G_y^{(2,0)}(s,z)
&= zG_y^{(1,0)}(s,z)c(s) + zG_y(s,z)c'(s)  \\
&=G_y(s,z) \big(z^2c(s)^2 + zc'(s)\big).
\end{split}
\end{equation}
The derivative of $c$ with respect to $s$ is
\begin{align*}
c'(s) 
&= -\sum_{p \le y}\frac{ p^s (\log p)^2}{(p^s-1)^2} \ + \ \sum_{p\ge y_1}\frac{(z-1)(\log p)^2(2p^{2s}-(z+1)p^s)}{(p^s-1)^2(p^s-z)^2} \ + \ \big(\log (\zeta(s)(s-1))\big)''.
\end{align*}
For fixed $s \in [1,2]$, note $c$ and $c'$ are meromorphic functions on $\CC$, with poles located only at $z=p^s$ for each $p \ge y_1$. Thus
\begin{align*}
	G_y(s,z) &=\left(1-\frac{z}{y_1^s}\right)^{-1} H_{y,1}(s,z),\\ 
	z^2c(s)^2 + zc'(s) &=  \left(1-\frac{z}{y_1^s}\right)^{-2} H_{y,2}(s,z),
\end{align*}
for functions $H_{y,1}, H_{y,2}$, whose smallest pole is at $z=y_2^s$ where $y_2$ is the smallest prime larger than $y_1$.

Letting $H_{y}:=H_{y,1}H_{y,2}$, we see \eqref{eq:Gy20} becomes
\begin{equation}\label{eq:Gy201}
	G_y^{(2,0)}(s,z) = \left(1-\frac{z}{y_1^s}\right)^{-3} H_{y}(s,z).
\end{equation}
Note $H_{y}$ has no poles inside $|z| \le y_2^{s-\varepsilon}$ so  $\max_{s\in[1,2]}|H_{y}(s,z)| \ll_{y,\varepsilon} 1$ uniformly for $|z| \le y_2^{s-\varepsilon}$, as we take the maximum of the continuous function $|H_y(s,z)|$ over the compact set $\{(s,z): 1 \le s \le 2, \, |z| \le y_2^{s-\varepsilon}\}$.

. Thus by Cauchy's integral formula, 
\[ [z^i] H_{y}(s,z) = \frac{1}{2\pi i} \int_{|z|=y_2^{s-\varepsilon}} \frac{H_{y}(s,z)}{z^{i+1}}\dd z \ll y_2^{-i(s-\varepsilon)}\max_{|z|=y_2^{s-\varepsilon}}|H_y(s,z)| \ll_{y,\varepsilon} y_2^{-i(1-\varepsilon)}\]
uniformly for $s \in [1,2]$. By the binomial theorem,
\[ [z^i]\left(1-\frac{z}{y_1^{s}}\right)^{-3} = y_1^{-is} \binom{i+2}{2} \ll (i+1)^2/y_1^{i}\]
uniformly for $s \in [1,2]$ and $i \ge 0$. Hence by the product rule, from \eqref{eq:Gy201} we conclude
\[ [z^i] G_y^{(2,0)}(s,z) =\sum_{i_1+i_2=i}  [z^{i_1}]\left(1-\frac{z}{y_1^{s}}\right)^{-3} [z^{i_2}] H_y(s,z)\ll_y  (i+1)^2/y_1^{i}.\]
This gives \eqref{eq:cibound} as desired, which completes the proof.

\begin{remark}
By a similar proof as of \eqref{eq:cibound} above, for any $y\ge2$, $m\ge0$,
\begin{align}\label{eq:cimbound}
[z^i] G_y^{(m,0)}(s,z) \ll_{m,y} (i+1)^m/y_1^i
\end{align}
holds uniformly for $s \in [1,2]$ and $i \ge 0$.
\end{remark}
\section{Proof of Theorem \ref{thm:fkthm}}
Recall $f_k = f_{k,1}$, $F=F_1$ and $G=G_1$. By \eqref{eq:fkyiky} with $y=1$, we have
\[ f_k = I_k + O\left(\frac{1}{k2^k}\right)\]
where
\[ I_{k}= \int_{1}^{2}\frac{1}{k!} F^{(0,k)}(s,0)\dd s.\]
We apply the product rule to 
\[F(s,z)=(s-1)^{-z}G(s,z)=(s-1)^{-z} \left(1-\frac{z}{2^s}\right)^{-1}G_2(s,z),\]
giving
\begin{align*}
\frac{1}{k!}F^{(0,k)}(s,0) = \sum_{i+j+l = k} \frac{\left(-\log(s-1)\right)^{l}}{l!}2^{-js}  \frac{1}{i!} G_2^{(0,i)}(s,0),
\end{align*}
using the power series in \eqref{eq:s1z}, as well as
$\left(1-\frac{z}{2^{s}}\right)^{-1} = \sum_{j \ge 0} 2^{-js}z^j$.
Thus we have
\begin{equation}\label{eq:Ikintegral2}
I_{k} =\sum_{i+j+l= k} \int_{1}^{2} \frac{\left(-\log(s-1)\right)^{l}}{l!}2^{-js}  \frac{1}{i!} G_2^{(0,i)}(s,0) \dd s.
\end{equation}
Now we introduce a similar integral $I'_{k}$, given by evaluating $G_2^{(0,i)}(s,0)$ in the integrand at $s=1$, namely,
\begin{align}\label{eq:Ik1integral2}
I'_{k} &=\sum_{i+j+l= k} \int_{1}^{2} \frac{\left(-\log(s-1)\right)^{l}}{l!}2^{-js}  \frac{1}{i!} G_2^{(0,i)}(1,0) \dd s.
\end{align}

Hence to establish Theorem~\ref{thm:fkthm}, it suffices to prove the following lemmas for $I'_k$ and $I_{k}-I'_{k}$.
\begin{lemma}\label{lem:I1minus1}
We have $I'_k = 1 -  \frac{\log 2}{4}2^{-k} \big(dk^2 +O(k\log (k+1))\big)$.
\end{lemma}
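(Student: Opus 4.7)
My plan is to express $I'_k$ as a $[z^k]$ coefficient of an auxiliary generating function, then extract the asymptotics via Cauchy's integral formula combined with singularity analysis at $z=1$ and $z=2$.

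First I would recognize from \eqref{eq:Ik1integral2} that
\[I'_k = [z^k]\bigl(G_2(1,z)\cdot H(z)\bigr),\qquad H(z):=\int_1^2\frac{(s-1)^{-z}}{1-z/2^s}\,ds,\]
using the power series expansions $(s-1)^{-z}=\sum_l(-\log(s-1))^l z^l/l!$, $(1-z/2^s)^{-1}=\sum_j z^j/2^{js}$, and $G_2(1,z)=\sum_i z^i\,G_2^{(0,i)}(1,0)/i!$. Then by subtracting the $s=1$ value inside the integrand, I would decompose
\[H(z)=\frac{2}{(1-z)(2-z)}+H_1(z),\qquad H_1(z):=\frac{2z}{2-z}\int_0^1\frac{u^{-z}(1-2^u)}{2\cdot 2^u-z}\,du,\]
where the rational first summand exposes simple poles at $z=1,2$ cleanly and the second is regular at $z=1$ but carries the dominant singularity at $z=2$.

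Next I would determine the leading behavior of $H_1(z)$ as $z\to 2$. Substituting $u=(2-z)\tau/(2\log 2)$ and extending the range in $\tau$ to $(0,\infty)$ (incurring only exponentially small error in $1/(2-z)$), a Mellin-type analysis employing the analytic continuation of $\int_0^\infty\tau^{s-1}/(1+\tau)\,d\tau=\pi/\sin\pi s$ yields
\[H_1(z)=-\frac{2z\log 2}{(2-z)^3}+O\!\left(\frac{\log|2-z|}{(2-z)^2}\right).\]
Applying Cauchy's formula on a contour $|z|=R\in(2,3)$, inside which $G_2(1,z)$ is analytic (nearest pole at $z=3$), I would collect residues. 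The residue of $2G_2(1,z)/((1-z)(2-z)z^{k+1})$ at $z=1$ yields the main term $+1$, and at $z=2$ yields $-d/2^k$; the residue at $z=2$ of $-2z\log 2\,G_2(1,z)/((2-z)^3z^{k+1})$, computed via $\partial_z^2[G_2(1,z)/z^k]|_{z=2}$, produces the dominant contribution $-\frac{\log 2}{4}dk^2/2^k+O(k/2^k)$, the leading $k^2$-piece arising from $G_2(1,2)\cdot k(k+1)/2^{k+2}$.

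The main obstacle is the subleading $\log(2-z)/(2-z)^2$ singularity in $H_1(z)$, together with the branch cut of $H(z)$ along the real interval $[2,4]$ arising from the vanishing of $1-z/2^s$ for $s\in[1,2]$. For the $\log$ correction I would use the explicit identity $[z^k]\log(2-z)/(2-z)^2=\bigl((k+1)(\log 2-H_k)+k\bigr)/2^{k+2}$ with $H_k=\log k+\gamma+O(1/k)$ the harmonic number; convolving with $G_2(1,z)$ yields an $O(k\log(k+1)/2^k)$ contribution, precisely the claimed error order. To sidestep the branch cut, I would subtract the explicit leading singular model of $H_1$ at $z=2$ and bound the $[z^k]$ of the residual by returning to the integral representation of $H_1(z)$, estimating the convolution $\sum_i A_i[z^{k-i}]H_1$ directly via the scaling analysis near $z=2$ while exploiting the fast decay of the $A_i$ (since $G_2(1,z)$ is analytic in $|z|<3$).
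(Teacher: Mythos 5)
Your reduction is sound at the level of functions: writing $I'_k=[z^k]\big(G_2(1,z)H(z)\big)$ with $H(z)=\int_1^2 (s-1)^{-z}(1-z/2^s)^{-1}\,\dd s$ is a faithful repackaging of \eqref{eq:Ik1integral2}; the splitting $H(z)=\frac{2}{(1-z)(2-z)}+H_1(z)$ with $H_1(z)=\frac{2z}{2-z}\int_0^1\frac{u^{-z}(1-2^u)}{2\cdot 2^u-z}\,\dd u$ is an identity; the local behaviour $H_1(z)=-\frac{2z\log 2}{(2-z)^3}+O\!\big(\tfrac{1}{|2-z|^2}\log\tfrac{1}{|2-z|}\big)$ as $z\to 2^-$ checks out (though the error from extending the $\tau$-range to $(0,\infty)$ is only polynomially, not exponentially, small in $1/(2-z)$ --- harmless here); and your coefficient bookkeeping does reproduce $1-\frac{\log 2}{4}dk^2 2^{-k}$ with the correct constant $d=G_2(1,2)$. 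This is a genuinely different route from the paper, which never analyses $H$ near $z=2$: the paper evaluates $[z^m]H(z)$ (the inner sum over $j+l=m$) by a purely real-variable estimate (Lemma \ref{lem:tech}, with the cutoff $T=15\log k$ and Taylor expansion of $2^{-(k-i)e^{-u}}$), and then convolves with $[z^i]G_2(1,z)\ll 3^{-i}$, using residues only for genuinely meromorphic auxiliary functions such as $G_2(1,2z)/(1-z)$.

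The gap is the transfer from your local expansion of $H_1$ to coefficient asymptotics. You cannot literally ``apply Cauchy's formula on $|z|=R\in(2,3)$ and collect residues'': as you concede, $z=2$ is a branch point and $H$ has a cut along $[2,4]$, so there is no pole and no admissible contour of radius $>2$. The fallback you sketch is insufficient as stated: if all you know is that $E(z):=H_1(z)+\frac{2z\log 2}{(2-z)^3}$ is analytic in $|z|<2$ with $E(z)=O\big(\tfrac{1}{|2-z|^2}\log\tfrac{1}{|2-z|}\big)$ there, then Cauchy's estimate on $|z|=2-1/k$ (the optimal radius) only gives $[z^k]E\ll k^2(\log k)\,2^{-k}$, which \emph{exceeds} the main term $\frac{\log 2}{4}dk^2 2^{-k}$ you are trying to isolate; an order-$(2-z)^{-2}$ bound valid only inside the disk does not transfer to $O(k\log k\,2^{-k})$ coefficients. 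To repair this you must do one of two nontrivial things: (i) establish the singular expansion uniformly in a region extending beyond $|z|=2$ that avoids the cut, and then run a Flajolet--Odlyzko/Hankel-type transfer --- precisely the contour machinery the paper is structured to avoid; or (ii) extract the second singular layer explicitly, proving something like $H_1(z)=-\frac{2z\log 2}{(2-z)^3}+\frac{a\log(2-z)+b}{(2-z)^2}+O\big(\tfrac{1}{|2-z|}\log\tfrac{1}{|2-z|}\big)$ uniformly on circles $|z|=2-1/k$, after which your exact identity for $[z^k]\log(2-z)/(2-z)^2$ plus the in-disk Cauchy bound on the remainder does yield the claimed $O(k\log(k+1))$ error. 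Either option is where the real work lies --- it is the analogue of the sharp $k/\log k$ saving the paper obtains in Lemma \ref{lem:tech} --- and neither is carried out in your sketch; ``estimating the convolution directly via the scaling analysis near $z=2$'' essentially amounts to redoing that real-variable lemma.
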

\begin{lemma}\label{lem:IminusI2}
We have $I_{k}=I'_{k} + O(k/2^{k})$.
\end{lemma}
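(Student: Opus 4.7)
The plan is to proceed in direct analogy with the proof of Lemma \ref{lem:IminusI2y}, with the additional twist that each summand now carries a geometric weight $2^{-js}$. First I would subtract \eqref{eq:Ik1integral2} from \eqref{eq:Ikintegral2} to obtain
\begin{equation*}
I_k - I'_k = \sum_{i+j+l=k}\frac{1}{i!\,l!}\int_1^2 (-\log(s-1))^l\,2^{-js}\,\bigl(G_2^{(0,i)}(s,0) - G_2^{(0,i)}(1,0)\bigr)\,\dd s.
\end{equation*}
Taylor expanding at $s=1$ gives
\begin{equation*}
G_2^{(0,i)}(s,0) - G_2^{(0,i)}(1,0) = (s-1)\,b_i + O\bigl((s-1)^2\,c_i\bigr),
\end{equation*}
where $b_i := G_2^{(1,i)}(1,0)$ and $c_i := \max_{s'\in[1,2]}|G_2^{(2,i)}(s',0)|$, exactly as in \eqref{eq:bici} (specialized to $y=2$).

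Next I would evaluate the resulting scalar integrals via the substitution $s = 1 + e^{-t}$. The crude bound $2^{-js} \le 2^{-j}$ (valid since $s \ge 1$) decouples the $j$-index from the integral and yields
\begin{equation*}
\int_1^2 (-\log(s-1))^l\,(s-1)^a\,2^{-js}\,\dd s \;\le\; 2^{-j}\int_0^\infty t^l e^{-(a+1)t}\,\dd t \;=\; \frac{2^{-j}\,l!}{(a+1)^{l+1}}
\end{equation*}
for $a \in \{1,2\}$. To bound the coefficients $b_i$ and $c_i$ I would invoke \eqref{eq:cimbound} from the remark following Lemma \ref{lem:IminusI2y}, specialized to $y=2$ (so $y_1=3$), giving $b_i/i! \ll (i+1)/3^i$ and $c_i/i! \ll (i+1)^2/3^i$ uniformly.

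Plugging these in and splitting the triple sum by first fixing $i$ and summing over $(j,l)$ with $j+l=k-i$, the first-order Taylor contribution is bounded by
\begin{equation*}
\ll \sum_{i+j+l=k}\frac{i+1}{3^i\,2^{j+l+1}} \;=\; \frac{1}{2^{k+1}}\sum_{i=0}^k (i+1)(k-i+1)\left(\frac{2}{3}\right)^i \;\ll\; \frac{k}{2^k},
\end{equation*}
while the second-order error term is
\begin{equation*}
\ll \sum_{i+j+l=k}\frac{(i+1)^2}{3^{i+l+1}\,2^j} \;\ll\; \frac{1}{2^k}\sum_{i=0}^k (i+1)^2\left(\frac{2}{3}\right)^i \;\ll\; \frac{1}{2^k},
\end{equation*}
where in the last display the inner $(j,l)$-sum is geometric and dominated by its $j=k-i$ endpoint since $1/2 > 1/3$. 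Adding the two contributions yields $I_k - I'_k = O(k/2^k)$, as claimed.

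The main obstacle is largely bookkeeping: the triple convolution structure $i+j+l=k$ and the extra $2^{-js}$ weight need to be handled carefully, but the pointwise bound $2^{-js} \le 2^{-j}$ dispatches them and reduces everything to three independent geometric sums. The one conceptual point worth emphasizing is that the second-order piece saves a factor of $k$ over the first-order piece because the $(j,l)$-convolution of $(2^{-j})$ with $(3^{-l})$ is bounded, whereas the convolution of $(2^{-j})$ with $(2^{-l})$ produces a linear factor $k-i+1$ --- this is precisely the source of the $O(k/2^k)$ rather than $O(1/2^k)$ bound in the statement.
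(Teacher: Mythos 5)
Your proposal is correct and follows essentially the same route as the paper: subtract the two integrals, expand $G_2^{(0,i)}(s,0)$ about $s=1$, invoke the coefficient bound \eqref{eq:cimbound} with $y=2$, bound $2^{-js}\le 2^{-j}$, evaluate the integrals via $s=1+e^{-t}$ as Gamma integrals, and sum the resulting convolution over $i+j+l=k$. The only (harmless) difference is that you carry a second-order Taylor expansion with remainder term $c_i$, whereas the paper uses just the mean value theorem with $b_i:=\max_{s'\in[1,2]}|G_2^{(1,i)}(s',0)|$, which already yields the $O(k/2^k)$ bound and makes the second-order piece unnecessary.
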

These are the $y=1$ analogues of Lemmas~\ref{lem:I1minus1y} and \ref{lem:IminusI2y}.

\subsection{Proof of Lemma \ref{lem:IminusI2}}
By the mean value theorem we have, uniformly for $s \in [1,2]$,
\[ \Big|G_2^{(0,i)}(s,0) - G_2^{(0,i)}(1,0)\Big| \le (s-1)b_i\]
for coefficients
\begin{align*}
b_{i} :=\max_{s'\in [1,2]} \Big| G_2^{(1,i)}(s',0) \Big|.
\end{align*}
Thus subtracting \eqref{eq:Ikintegral2} from \eqref{eq:Ik1integral2}, we find
\begin{equation}\label{eq:IminusI12}
\begin{split}
\big| I_{k}-I'_{k} \big| &=\left| \sum_{i+j+l = k} \int_{1}^{2} \frac{\left(-\log(s-1)\right)^l}{l!}2^{-js} \frac{1}{i!} \left( G_2^{(0,i)}(s,0) - G_2^{(0,i)}(1,0)\right) \dd s  \right| \\
& \le \sum_{i+j+l = k} \int_{1}^{2}  \frac{\left(-\log(s-1)\right)^l}{l!}2^{-js}  (s-1) \frac{b_i}{i!} \dd s.
\end{split}
\end{equation}
By \eqref{eq:cimbound}, we have uniformly for $t\in[1,2]$,
\begin{align}\label{eq:bibound} 
\frac{1}{i!}G_2^{(1,i)}(t,0) = [z^i]G_2^{(1,0)}(t,z)  \ll (i+1)3^{-i}.
\end{align}
Hence $\frac{b_i}{i!}  \ll (i+1)3^{-i}$, so that \eqref{eq:IminusI12} implies
\begin{align*}
I_k - I'_k &\ll \sum_{i+j+l= k}  \int_{0}^{\infty} \frac{t^l}{l!}  2^{-j(1+e^{-t})} e^{-2t} (i+1) 3^{-i}  \dd t\\
&\le \sum_{i+j+l= k} (i+1)3^{-i}2^{-j} \int_{0}^{\infty} \frac{t^l}{l!} e^{-2t}   \dd t\\
& = \sum_{i+j+l= k} (i+1) 3^{-i} 2^{-j-l-1}\int_{0}^{\infty} \frac{u^l}{l!}e^{-u} \dd u
\ = \ \sum_{i+j+l= k} (i+1) 3^{-i} 2^{-j-l-1}
\end{align*}
where the last equalities follow from substituting $u/2$ for $t$ and recalling the integral form \eqref{eq:gamma} of the Gamma function. Hence we conclude
\begin{align*}
I_k - I'_k \ll \sum_{i\le k} (i+1) 3^{-i} 2^{i-k}\sum_{j\le k-i} 1 \le k2^{-k}\sum_{i\le k} (i+1) (2/3)^{i}
\ll k2^{-k}.
\end{align*}

\subsection{Proof of Lemma \ref{lem:I1minus1}}

Recall
\begin{align}\label{eq:ikprimerecall}
I'_k&= \sum_{i=0}^{k}\frac{1}{i!} G_2^{(0,i)}(1,0) \sum_{j+l = k-i} \int_{1}^{2} \frac{\left(-\log(s-1)\right)^l}{l!}2^{-js}  \dd s.
\end{align}
We will prove in the next subsection that
\begin{lemma}\label{lem:tech}
For $k \ge 1$, we have
\begin{align*}
\sum_{j+l=k} \int_{1}^{2} \frac{\left(-\log(s-1)\right)^l}{l!}2^{-sj}  \dd s = 2 -  \frac{\log 2}{4} 2^{-k}\Big(k^2+O\big((k+1)\log (k+2)\big)\Big).
\end{align*}
\end{lemma}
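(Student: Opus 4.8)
The plan is to compute the inner integral $\int_1^2 (-\log(s-1))^l 2^{-sj}\,\dd s / l!$ exactly (or to sufficient precision) via the substitution $s = 1+e^{-t}$, and then sum over $j+l=k$. After substituting, $\dd s = -e^{-t}\dd t$ and the integral becomes $\int_0^\infty \frac{t^l}{l!} 2^{-j} 2^{-je^{-t}} e^{-t}\,\dd t$. The factor $2^{-je^{-t}} = \exp(-(j\log 2)e^{-t})$ is the troublesome piece: it is close to $1$ for large $t$ but decays for small $t$. I would write $2^{-je^{-t}} = 1 - (1 - e^{-j(\log 2)e^{-t}})$ and split accordingly, so that the ``main'' contribution comes from replacing $2^{-je^{-t}}$ by $1$, giving $\int_0^\infty \frac{t^l}{l!}2^{-j}e^{-t}\,\dd t = 2^{-j}$ by \eqref{eq:gamma}. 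Summing $2^{-j}$ over $j+l=k$ gives $\sum_{j=0}^k 2^{-j} = 2 - 2^{-k}$, which already produces the leading ``$2$''. The entire content of the lemma is then to show that the correction term, summed over $j+l=k$, equals $-\frac{\log 2}{4}2^{-k}(k^2 + O((k+1)\log(k+2))) + (\text{something absorbed into the }2^{-k}\text{ from }\sum 2^{-j})$ — in fact one should track that the $-2^{-k}$ from $\sum_{j\le k}2^{-j}$ is itself of lower order than the stated error, or is combined with it.

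Next I would handle the correction $-\sum_{j+l=k} 2^{-j}\int_0^\infty \frac{t^l}{l!}(1-e^{-j(\log 2)e^{-t}})e^{-t}\,\dd t$. The key observation is that $1 - e^{-je^{-t}\log 2}$ is supported essentially on $t = O(\log j)$, i.e. where $e^{-t} \gtrsim 1/j$; for such small $t$, and with $l$ typically of size $k$, the factor $t^l/l!$ is tiny unless $l$ is small. So the dominant terms in the sum over $j+l=k$ are those with $l$ small (equivalently $j$ close to $k$). For such terms I would Taylor expand: for fixed small $l$, $\int_0^\infty \frac{t^l}{l!}(1-e^{-j(\log 2)e^{-t}})e^{-t}\,\dd t$ can be evaluated by the substitution $v = je^{-t}$ (so $t = \log(j/v)$, $e^{-t}\dd t = -\dd v/j$), turning it into $\frac{1}{j}\int_0^{j}\frac{(\log(j/v))^l}{l!}(1-e^{-v\log 2})\,\dd v$. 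The integrand $1 - e^{-v\log 2}$ is integrable against $\dd v$ near $0$ and $\approx 1$ for large $v$, so the main contribution to $\int_0^j (\log(j/v))^l (1 - e^{-v\log 2})\,\dd v$ comes from $v$ of order $1$, where $\log(j/v) \approx \log j$, giving a main term of size $\frac{(\log j)^l}{l!}\cdot\frac{1}{j}\cdot C$ for a constant $C = \int_0^\infty(1-e^{-v\log 2})\cdot(\text{appropriate weight})\,\dd v$. Actually, to get the precise constant $\frac{\log 2}{4}$ and the $k^2$, I expect one needs to be more careful: expand $(\log(j/v))^l = (\log j - \log v)^l$ and keep enough terms; the leading behavior as $j \approx k \to \infty$ after summing $\sum_{l} 2^{-(k-l)}\frac{(\log k)^l}{l!}\cdots$ should telescope/resum (note $\sum_l \frac{(\log k)^l}{l!}\cdot 2^{-(k-l)}$ relates to $2^{-k}\sum_l \frac{(2\log k)^l}{l!} = 2^{-k}e^{2\log k}\cdot(\text{partial})$, giving the $k^2$).

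The main obstacle, therefore, is bookkeeping the resummation: after the substitution $v = je^{-t}$ one has a double sum over $j+l=k$ of integrals, and one must (i) show the terms with $l$ large or $j$ small are negligible (uniformly, with total error $O(2^{-k}k\log k)$), (ii) identify the exact constant in front of the $k^2$ as $\frac{\log 2}{4}d$ — which requires recognizing that $G_2$ and related constants do \emph{not} enter here (this is the purely analytic Lemma \ref{lem:tech}, so in fact the constant should come out as just $\frac{\log 2}{4}$ with $k^2$; the $d$ in Lemma \ref{lem:I1minus1} enters when this is combined with $\sum_i \frac{1}{i!}G_2^{(0,i)}(1,0)$). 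Concretely, I would: write $N(j,l) := \int_0^\infty \frac{t^l}{l!}(1-e^{-j(\log 2)e^{-t}})e^{-t}\,\dd t$; prove $0 \le N(j,l) \le \min(1, \, \text{(small)})$ with a clean bound like $N(j,l) \ll \frac{(\log(j+2))^{l+1}}{l! \, j}$ via the $v$-substitution; sum $\sum_{j+l=k} 2^{-j} N(j,l)$, split at $l \le L := C\log k$; for $l > L$ bound crudely using $\sum_j 2^{-j} \le 2$ and $(\log k)^l/l!$ small; for $l \le L$ use the asymptotic $N(k-l, l) = \frac{\log 2}{4}\cdot\frac{(\text{polynomial in }\log k\text{ of degree} \le l+1)}{\cdots}$, resum the main terms to get $\frac{\log 2}{4}2^{-k}k^2$, and bound the rest by $O(2^{-k}k\log k)$. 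The delicate point is making the error terms in the $v$-integral (from replacing $\log(j/v)$ by $\log j$ and from the tail $v > j$) uniformly small after multiplication by $2^{-j}$ and summation; I expect a bound of the form $N(j,l) = 2^{-?}\cdots$ is cleaner if one instead expands $1 - 2^{-je^{-t}}$ directly as a single exponential and differentiates under the integral sign in the parameter $j\log 2$. I would double-check the constant by an independent sanity check at the level of generating functions: $\sum_k (\sum_{j+l=k}\cdots) x^k$ should have a pole structure at $x = 1/2$ consistent with the claimed expansion.
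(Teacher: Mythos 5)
Your setup — substituting $s=1+e^{-t}$, peeling off the main term $\sum_{j\le k}2^{-j}=2-2^{-k}$, and reducing to the correction sum $\sum_{j+l=k}2^{-j}N(j,l)$ with $N(j,l)=\int_0^\infty \frac{t^l}{l!}e^{-t}\big(1-2^{-je^{-t}}\big)\dd t$ — is the same starting point as the paper's. But the core of your plan rests on a misidentification of where the secondary term comes from, backed by a bound on $N(j,l)$ that is false. You claim the dominant contributions are the terms with $l$ small ($j$ close to $k$), and propose $N(j,l)\ll (\log(j+2))^{l+1}/(l!\,j)$. In fact for small $l$ one has $N(j,l)\to 1$: the substitution $v=je^{-t}$ gives exactly $N(j,0)=1-\frac{1-2^{-j}}{j\log 2}$, contradicting the claimed bound $\ll (\log j)/j$. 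The slip is in asserting that $\int_0^j(\log(j/v))^l(1-e^{-v\log 2})\dd v$ is dominated by $v\asymp 1$; since $1-e^{-v\log 2}\to 1$ as $v$ grows, the mass sits at $v\asymp je^{-l}$ and the integral is $\sim l!\,j$ whenever $l\lesssim\log j$. Consequently the small-$l$ terms contribute only $\sum_{l\lesssim\log k}2^{l-k}\ll k^{\log 2}2^{-k}$ to the correction — far below $k^2 2^{-k}$ — and your resummation heuristic $\sum_l (2\log k)^l/l!\approx k^2$ produces the right shape by accident, not by the right mechanism.

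The main term lives in exactly the range your plan discards. For $l\ge C\log k$ one has $(k-l)e^{-t}=o(1)$ once $t\ge cl$, so the second-order expansion $2^{-(k-l)e^{-t}}=1-\log 2\,(k-l)e^{-t}+O(k^2e^{-2t})$ applies there, and the range $t\le cl$ is killed by a Stirling bound; this yields $N(j,l)=\log 2\,(k-l)2^{-l-1}+O(k^2 3^{-l}+2^{-l})$, hence $2^{-j}N(j,l)=\frac{\log 2}{2}\,j\,2^{-k}+\cdots$ with $j=k-l$. The $k^2$ then arises by summing this \emph{linear-in-$j$} contribution over the roughly $k$ admissible values of $j$, giving $\frac{\log 2}{2}\cdot\frac{k^2}{2}\cdot 2^{-k}$, while the head $l\le C\log k$ and the truncation at $j\le k-C\log k$ account for the $O(k\log k)$ error. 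Your step ``for $l>L$ bound crudely using $\sum_j 2^{-j}\le 2$ and $(\log k)^l/l!$ small'' would therefore delete the entire secondary term, so the proposal as written cannot reach the stated asymptotic. The repair is essentially the paper's proof (split at $T=15\log k$, Taylor expand to second order in the large-$l$ range, crude bounds in the small-$l$ range): swap which range you treat as main and which as error.
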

\begin{remark}
The relative saving is $k/\log k$ and it appears sharp. We find it to be an unusual saving.
\end{remark}
Using Lemma~\ref{lem:tech} we simplify the inner sum in \eqref{eq:ikprimerecall} and find
\begin{equation}\label{eq:Ik1B}
    \begin{split}
I'_k &= \sum_{i=0}^{k}\frac{1}{i!} G_2^{(0,i)}(1,0) \bigg(2 -  \frac{\log 2}{4} 2^{i-k}\Big((k-i)^2+O\big((k+1-i)\log (k+2-i)\big)\Big)\bigg) \\
&= 2I'_{k,2} -  \frac{\log 2}{4}2^{-k} B 
\end{split}
\end{equation}
where
\begin{align}
 I'_{k,2} &:=\sum_{i=0}^{k} \frac{1}{i!}G_2^{(0,i)}(1,0),\\
\label{eq:B}B&:=\sum_{i=0}^{k} \frac{2^{i}}{i!}\Big((k-i)^2+ O\big((k-i+1)\log(k-i+2)\big)\Big) G_2^{(0,i)}(1,0).
\end{align}
Lemma~\ref{lem:I1minus1y} with $y=2$ yields
\[ I'_{k,2} =G_2(1,1)+O(3^{-k}) = \tfrac{1}{2} + O(3^{-k}).\]
Similarly as in the proof of Lemma~\ref{lem:I1minus1y}, Cauchy's integral formula and residue theorem imply
\begin{equation}\label{eq:B1}
\begin{split}
B' &:= \sum_{i=0}^{k}\frac{2^i}{i!} G_2^{(0,i)}(1,0) 
=[z^k] \frac{G_2(1,2z)}{1-z} \\
&= \frac{1}{2\pi i} \int_{|z|=1/4} \frac{G_2(1,2z)}{1-z} \frac{\dd z }{z^{k+1}} \\
&=\frac{1}{2\pi i} \int_{|z|=2} \frac{G_2(1,2z)}{1-z} \frac{\dd z }{z^{k+1}} + G_2(1,2) -\frac{\lim_{z \to 3/2} (z-3/2)\,G_y(1,2z)}{(1-3/2)(3/2)^{k+1}} \\
&= d + O((3/2)^{-k}). 
\end{split}
\end{equation}
Here $G_2(1,2)=\tfrac{1}{4}\prod_{p>2}(1-\tfrac{2}{p})^{-1}(1-\tfrac{1}{p})^2 = d$. We also note $G_2(1,z)$ is meromorphic in $|z|<4$ with simple pole at $z=3$ (so $z=3/2$ is the smallest pole of $G_2(1,2z)$).
In particular,
\begin{align}\label{eq:ziG21z}
\frac{1}{i!} G_2^{(0,i)}(1,0) = [z^i]G_2(1,z) \ll 3^{-i}.
\end{align}
Thus combining \eqref{eq:B}, \eqref{eq:B1} and \eqref{eq:ziG21z}, we obtain
\begin{align*}
B-k^2 B'&= \sum_{i=0}^{k} \frac{2^i}{i!} G_2^{(0,i)}(1,0) ((k-i)^2+O((k-i+1)\log(k-i+2))-k^2) \\
& \ll \sum_{i=0}^{ k} (2/3)^i (ik+O(k\log (k+1)))\ll k\log (k+1).
\end{align*}
Hence $B = dk^2 + O(k\log (k+1))$, so plugging back into \eqref{eq:Ik1B} we conclude
\begin{align}
I'_k&= 1 -  \frac{\log 2}{4}2^{-k} \big(dk^2 + O(k\log (k+1))\big).
\end{align}

\subsection{Proof of Lemma \ref{lem:tech}}
We may suppose $k \ge 2$. Multiplying through by $2^k$, we aim to prove 
\begin{align}\label{eq:Aklem}
A_k = 2^{k+1}-\frac{\log 2}{4} k^2 + O(k\log k),
\end{align}
for
\[ A_k := 2^k\sum_{i+j=k} \int_{1}^{2} \frac{\left(-\log(s-1)\right)^{i}}{i!}2^{-js}\dd s = \sum_{i=0}^{k} \frac{2^i}{i!}J(i)\]
where, substituting $s=1+e^{-u}$,
\begin{align*}
J(i)  & := \int_{1}^{2} \left(-\log(s-1)\right)^{i}2^{-(k-i)(s-1)}\dd s
= \int_{0}^{\infty} u^i e^{-u} 2^{-(k-i)e^{-u}}\dd u.
\end{align*}
Note the trivial bound $J(i)\le i!$, using $2^{-(k-i)e^{-u}}\le 1$.

In order to conclude \eqref{eq:Aklem}, it suffices to prove the following two estimates and apply them with  $T = 15\log k$:
\begin{align}
\sum_{0 \le i \le T} \frac{2^i}{i!}J(i) &=  \sum_{0\le i\le T} 2^i + O(kT) \qquad\,\,\,\qquad\qquad\qquad\quad \text{for} \quad k \ge T \ge \log k, \label{eq:smalli}\\
\sum_{T<i \le k} \frac{2^i}{i!}J(i) &= 2^{k+1}-\frac{\log 2}{4} k^2 - \sum_{0 \le i \le T} 2^i + O(kT) \qquad \text{for} \quad  k\ge T \ge 15 \log k. \label{eq:largei}
\end{align}

We first prove \eqref{eq:smalli}. For $T \ge \log k$, the contribution of $e^u \ge k$ to $J(i)$, i.e.~$u \ge \log k$, is handled by the Taylor expansion $2^{-(k-i)e^{-u}}=1-O(ke^{-u})$. Thus
\begin{align*}
J(i) &= \int_{0}^{\log k} u^i e^{-u} 2^{-(k-i)e^{-u}} \dd u + \int_{\log k}^{\infty} u^i e^{-u} (1+O(ke^{-u}))\dd u\\
&= \int_{0}^{\log k} u^i e^{-u} 2^{-(k-i)e^{-u}} \dd u + \int_{\log k}^{\infty} u^i e^{-u} \dd u + O(k 2^{-i}) \int_{0}^{\infty} v^i e^{-v}\dd{v}\\
&= \int_{0}^{\log k} u^i e^{-u} 2^{-(k-i)e^{-u}} \dd u + \int_{0}^{\infty} u^i e^{-u} \dd u - \int_0^{\log k} u^i e^{-u}\dd u + O(k 2^{-i} i!) \\
&=  i!  + \int_{0}^{\log k} u^i e^{-u} (2^{-(k-i)e^{-u}}-1) \dd u +  O(k 2^{-i} i!).
\end{align*}
Summing over $i\le T$, we obtain
\begin{align}\label{eq:smalli1}
\sum_{0\le i\le T}\frac{2^i}{i!}J(i)=\sum_{0\le i\le T}2^i- \sum_{0\le i\le T}\frac{2^i}{i!}\int_{0}^{\log k} u^i e^{-u} (1-2^{-(k-i)e^{-u}}) \dd u    +O( k  T).
\end{align}
Thus to conclude \eqref{eq:smalli} from \eqref{eq:smalli1}, it remains show
\begin{align}\label{eq:smalli2}
\sum_{0\le i\le T}\frac{2^i}{i!}\int_{0}^{\log k} u^i e^{-u} (1-2^{-(k-i)e^{-u}}) \dd u = O(k\log k).
\end{align}
The contribution of $i \le \log k$ is
\begin{align}\label{eq:smalli21}
\sum_{0\le i\le \log k}\frac{2^i}{i!}\int_{0}^{\infty} u^i e^{-u}\dd u \le \sum_{0\le i\le \log k}2^i = O(2^{\log k})=O(k)
\end{align}
since $2<e$. Next recall the function $u \mapsto u^i e^{-u}$ is increasing for $u \le i$, implying
\begin{align}\label{eq:smalli22}
\sum_{\log k<i\le T}\frac{2^i}{i!}\int_{0}^{\log k} u^i e^{-u} \dd u &\le \sum_{\log k<i\le T}\frac{2^i}{i!}\log k (\log k)^i e^{-\log k}  \le \frac{\log k}{k} \sum_{i=0}^{\infty} \frac{(2\log k)^i}{i!} = k \log k.
\end{align}
Combining \eqref{eq:smalli21} and \eqref{eq:smalli22} gives \eqref{eq:smalli2} as desired. This completes the proof of \eqref{eq:smalli}.

Now to prove \eqref{eq:largei}, we begin by handling the contribution of small $u$ to the integral $J(i)$. Since the function $u\mapsto u^i e^{-u}$ increases for $u\le i$, for fixed $a \in (0,1)$, the contribution of $u \le ai$ to $J(i)$ is at most
\begin{align*}
\int_{0}^{ai} u^i e^{-u} 2^{-(k-i)e^{-u}}\dd u\le \int_{0}^{ai} u^i e^{-u}\dd u \le (ai)(ai)^i e^{-ai} \ll_a i!\sqrt{i} (ea/e^a)^i  
\end{align*}
by Stirling's approximation. Thus when $a$ is small enough to satisfy $e^a  > 2ea$, we have
\begin{equation}\label{eq:smallu} 
 \int_{0}^{ai} u^i e^{-u} 2^{-(k-i)e^{-u}}\dd u \ll_a i! 2^{-i}.
\end{equation}
For concreteness, we fix $a=0.21$.  Now let $c:=a/3$. For $T<i\le k$, if $u \ge ci$ we have 
\[(k-i)e^{-u} \le ke^{-ci}< ke^{-cT} \le k^{1-15c} = k^{-.05}\]
since $T \ge 15 \log k$. In particular, we may use the 2nd order Taylor expansion
\[ 2^{-(k-i)e^{-u}} = 1 -\log 2 (k-i)e^{-u} + O(k^2e^{-2u}).\]
Thus by \eqref{eq:smallu} we have for $T< i \le k$,
\begin{equation}
\begin{split}\label{eq:fiAij}
J(i) & = \int_{ci}^{\infty} u^i e^{-u}2^{-(k-i)e^{-u}}\dd u + O(i! 2^{-i}) \\\
& = \int_{ci}^{\infty} u^i e^{-u}\left(  1 -\log 2 \,(k-i)e^{-u} + O\left( k^2e^{-2u}\right) \right) \dd u + O(i! 2^{-i}) \\\
& = A_{i,1}-\log 2 (k-i) A_{i,2}+O(k^2 A_{i,3}) + O(i! 2^{-i})
\end{split}
\end{equation}
where, for $j=1,2,3$,
\begin{align} \label{eq:Aij}
	A_{i,j} := \int_{ci}^{\infty} u^i e^{-ju}\dd u=j^{-i-1}\int_{cij}^{\infty} v^i e^{-v}\dd v= j^{-i-1}\,i!\, (1+ O( 2^{-i})).
\end{align}
In the last equality in \eqref{eq:Aij} we used \eqref{eq:smallu}. Plugging \eqref{eq:Aij} back into \eqref{eq:fiAij} and dividing through by $i!$, we find that
\begin{equation}\label{eq:J(i)Aij}
\begin{split}
\frac{J(i)}{i!} & = (1+ O( 2^{-i}))-\log 2 (k-i) 2^{-i-1} (1+ O( 2^{-i}))+O(k^2 3^{-i-1}+2^{-i}) \\
& = 1-\log 2 (k-i) 2^{-i-1}+O(k^2 3^{-i} + 2^{-i}).
\end{split}
\end{equation}
Summing over $i\in (T,k]$, we conclude that
\begin{align*}
\sum_{T<i \le k} \frac{2^i}{i!}J(i) &= \sum_{T<i \le k}\bigg(2^i - \log 2 (k-i)2^{-1} + O(k^2 (2/3)^{i} + 1)\bigg)\\
&= 2^{k+1} - \sum_{i \le T}2^i - \frac{\log 2}{2} \sum_{T<i \le k} (k-i) \ + \ O(k^2 (2/3)^{T} + k)\\
& =2^{k+1} - \sum_{i \le T}2^i -\frac{\log 2}{2} \Big( \frac{k^2}{2} + O(Tk)\Big)
\end{align*}
holds. Here $k^2(2/3)^{T} \le k^{2+15\log(2/3)} < k^{-4}$ since $T \ge 15\log k$. This gives \eqref{eq:largei} as desired, and hence completes the proof of Lemma~\ref{lem:tech}.

\section{Probability-theoretic argument}

In this section, we give an alternative probabilistic interpretation of Erd\H{o}s sums, showing 

\begin{proposition}\label{prop:fkprob}
We have $f_k=1+O(k/2^{k/4})$.
\end{proposition}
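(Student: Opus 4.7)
The plan is to prove the approximation $f_k = e^\gamma I_{\lfloor k/4\rfloor} + O(k\cdot 2^{-k/4})$ and then invoke \Cref{thm:iteratedIk} to conclude.

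First I would reduce the series to an integral via $f_k = \int_1^2 \sum_{\Omega(n)=k} n^{-s}\,\dd s + O(1/(k\cdot 2^k))$, exactly as in \eqref{eq:fkyiky}. For $s>1$, the inner sum admits a clean probabilistic interpretation: $\sum_{\Omega(n)=k} n^{-s} = \zeta(s)\,\Pr(\Omega(N_s) = k)$, where $N_s$ is the random positive integer with $\Pr(N_s = n) = n^{-s}/\zeta(s)$. Its factorization $N_s = \prod_p p^{E_p}$ features independent geometric exponents $E_p$, so $\Omega(N_s)=\sum_p E_p$ is an independent sum amenable to probabilistic tools, placing the problem squarely in the Dickman-type framework of \Cref{thm:iteratedIk}.

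Next I would expose the $I_k$-structure hidden in $f_k$. Sort the prime factors of $N_s$ in decreasing order $p_1 \ge p_2 \ge \cdots \ge p_{\Omega(N_s)}$ and introduce the nested ratios $U_i := \log p_{i+1}/\log p_i \in [0,1]$. A direct computation gives, on the event $\{\Omega(N_s)=k\}$,
\[ \log N_s \;=\; \log p_1 \cdot \bigl(1 + U_1(1 + U_2(\cdots(1 + U_{k-1})\cdots))\bigr), \]
which matches the recursive denominator of $I_k$. Summation over primes is then converted into Lebesgue integration over the $U_i$'s by Mertens-type estimates, producing both the $I_k$-shaped integrand and, via $\prod_{p\le X}(1-1/p)^{-1}\sim e^\gamma\log X$ applied to the unresolved \emph{small} prime factors, an overall prefactor of $e^\gamma$.

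The decision to resolve only $\lfloor k/4\rfloor$ of the factors (rather than all $k$) balances two competing error sources: finer resolution sharpens the Mertens approximation but accumulates more discretisation error per step, whereas coarser resolution weakens the exponential convergence supplied by \Cref{thm:iteratedIk}. Matching these two errors forces the truncation near $k/4$ and yields $f_k = e^\gamma I_{\lfloor k/4\rfloor} + O(k\cdot 2^{-k/4})$. Combining with the estimate $I_{\lfloor k/4\rfloor} = e^{-\gamma} + O(2^{-k/4})$ from \Cref{thm:iteratedIk} gives $e^\gamma I_{\lfloor k/4\rfloor} = 1 + O(2^{-k/4})$, hence the proposition.

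The principal obstacle will be making the Mertens-type conversion quantitative: passing from arithmetic sums $\sum_p (\cdot)/p$ to Lebesgue integrals $\int_0^1 (\cdot)\,\dd u$ in the nested multi-dimensional setting, with cumulative error compatible with $O(k\cdot 2^{-k/4})$. This likely requires careful exploitation of the recursive distributional equation $Z \stackrel{d}{=} 1 + UZ'$ implicit in the iterated integrals, for instance through a coupling between the arithmetic ratios $U_i$ and idealised i.i.d.\ uniforms on $[0,1]$.
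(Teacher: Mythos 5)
Your outline does follow the same route as the paper's probabilistic argument (ordered prime factors, nested log-ratios matching $u_j$, Mertens supplying $e^{\gamma}$, truncation at $j=\lfloor k/4\rfloor$), but the step you defer to the end is precisely where the substance of the proof lies, and the plan as stated has a concrete defect. Nothing in your sketch controls the $k-j$ \emph{unresolved} prime factors: they do not merely feed a Mertens product, they sit inside the nested denominator as the quantity $\log\big(P_{j+2}(n)\cdots P_k(n)\big)/\log P_{j+1}(n)$, which can be as large as $k-j-1$. One therefore only gets a two-sided sandwich, with $u_j\big(\ldots,\tfrac{\log P_{j+1}}{\log P_j}\big)$ from above and $u_j\big(\ldots,\tfrac{\log P_{j+1}}{\log P_j}(k-j)\big)$ from below (cf.\ \eqref{eq:ujupper}--\eqref{eq:lowerupper}), so the lower bound needs the generalized integrals $I_j(c_j)$ with $c_j=k-j$, i.e.\ \Cref{theo:problem1}, not just $I_{\lfloor k/4\rfloor}=I_j(1)$ from \Cref{thm:iteratedIk} as you invoke. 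Moreover, before any conversion one must discard the integers whose $(j+1)$-st largest prime factor is below $e^{y}$ with $y=2^j$ (the Erd\H{o}s--S\'ark\"ozy bound, \Cref{cor:ksmooth}); this is what makes each per-step Mertens error $O(1/\log P_1)=O(2^{-j})$ and is one side of the balance that fixes $j\approx k/4$: the smooth-number loss $y^2k2^{j-k}$ against the convergence rate $k2^{-j}$, rather than the ``resolution versus discretisation'' trade-off you describe (the Riemann-sum discretisation error is only $O(j/N)$ with $N=4^k$, which is negligible).

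The conversion itself cannot be done by treating each prime sum separately, since the summand depends on all the ratios jointly. The paper's mechanism is to exploit that $u_j$ is decreasing in each coordinate, combining an Abel-summation/mass-rearrangement inequality (\Cref{lem:mass}) with the fact that, conditionally on the larger factors, $\log p_{r+1}/\log p_r$ behaves like a uniform variable up to a factor $1+O(1/\log P_1)$ (\Cref{lem:vdLn}, via densities of the sets ${\rm L}_a$), and iterating coordinate-by-coordinate (\Cref{lem:inductmonotone}) to produce upper and lower Riemann sums for $I_j(1)$ and $I_j(k-j)$ with multiplicative error $(1+O(2^{-j}))^{j}$. Your suggestion of a coupling with i.i.d.\ uniforms is the right intuition, but without this monotone sandwich (or an explicit coupling with quantified error) the claimed relation $f_k=e^{\gamma}I_{\lfloor k/4\rfloor}+O(k2^{-k/4})$ is not established; this is a genuine gap, not a routine verification. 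Finally, your opening reduction to $\int_1^2\sum_{\Omega(n)=k}n^{-s}\,\dd s$ and the zeta-distributed $N_s$ plays no role in what follows; the paper instead works directly with $1/(n\log n)=\big(e^{\gamma}+O(1/\log P_1(n))\big)\tfrac{\log P_1(n)}{\log n}\,{\rm d}({\rm L}_n)$, which is where the factor $e^{\gamma}$ actually enters.
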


In view of Theorem~\ref{thm:fkthm} we haven't tried to optimize the exponent $2^{k/4}$.

For an integer $a\ge1$, let $P^+(a)$ and $P^-(a)$ denote the largest and smallest prime factors of $a$, respectively (here $P^+(1):=1$ and $P^-(1):=1$). Also let $P_j(n)$ denote the $j$th largest prime of $n$, with multiplicity, so that $n=P_1(n)\cdots P_k(n)$. In particular $P_1(n)=P^+(n)$.

Define the set of L-multiples ${\rm L}_a$,\footnote{L for lexicographic}
\begin{align*}
{\rm L}_a := \{ba\in\N : P^-(b) \ge P^+(a)\}.
\end{align*}
We define the (natural) density of a set $A \subseteq \N$ to be ${\rm d}(A):=\lim_{x \to \infty} |A \cap [1,x]|/x$ as long as this limit exists. Note ${\rm d}({\rm L}_a) = \frac{1}{a}\prod_{p<P^+(a)}(1-1/p)$.

\subsection{Preliminary lemmas}

We begin with some preliminaries.

\begin{lemma}\label{lem:dLareduce}
For any $a\in\N$, we have
\begin{align*}
\sum_{p\ge P^+(a)}{\rm d}({\rm L}_{ap}) = {\rm d}({\rm L}_{a}).
\end{align*}
\end{lemma}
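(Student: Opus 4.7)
The plan is to unpack the explicit formula $\mathrm{d}({\rm L}_a) = \frac{1}{a}\prod_{p<P_1(a)}(1-1/p)$ and reduce the identity to a telescoping sum over primes. First, for every prime $p \ge P_1(a)$, observe that $p$ is the largest prime factor of $ap$, so $P_1(ap) = p$, and consequently
$$\mathrm{d}({\rm L}_{ap}) = \frac{1}{ap}\prod_{q < p}\Bigl(1-\frac{1}{q}\Bigr).$$

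Writing $P = P_1(a)$ and splitting $\prod_{q<p}(1-1/q) = \prod_{q<P}(1-1/q)\,\prod_{P \le q < p}(1-1/q)$, the sum factors as
$$\sum_{p \ge P} \mathrm{d}({\rm L}_{ap}) \;=\; \frac{1}{a}\prod_{q<P}\Bigl(1-\frac{1}{q}\Bigr) \cdot \sum_{p \ge P} \frac{1}{p}\prod_{P \le q < p}\Bigl(1-\frac{1}{q}\Bigr) \;=\; \mathrm{d}({\rm L}_a)\cdot S,$$
where $S := \sum_{p \ge P} \frac{1}{p}\prod_{P \le q < p}(1-1/q)$. The proof then reduces to showing $S = 1$.

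For this final step I would use the identity $\frac{1}{p} = 1 - (1 - \frac{1}{p})$, which gives
$$\frac{1}{p}\prod_{P \le q < p}\Bigl(1-\frac{1}{q}\Bigr) = \prod_{P \le q < p}\Bigl(1-\frac{1}{q}\Bigr) - \prod_{P \le q \le p}\Bigl(1-\frac{1}{q}\Bigr).$$
Enumerating the primes $\ge P$ in increasing order, the sum $S$ telescopes to $1 - \lim_{x \to \infty}\prod_{P \le q \le x}(1-1/q)$, and this limit vanishes by Mertens' third theorem $\prod_{q \le x}(1-1/q) \sim e^{-\gamma}/\log x$. Hence $S = 1$, yielding the claim. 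No substantive obstacle is expected: the only technical point is the convergence of the telescoping series, which is immediate from Mertens.
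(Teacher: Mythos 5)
Your proof is correct, but it runs along a different track than the paper's. The paper does not touch the explicit density formula at this point: it partitions the set of $y$-rough integers $\{b : p\mid b \Rightarrow p\ge y\}$ according to the smallest prime factor $q\ge y$, takes densities of both sides to get the identity $\prod_{p<y}(1-1/p)=\sum_{q\ge y}\frac{1}{q}\prod_{p<q}(1-1/p)$, and then specializes $y=P_1(a)$ and divides by $a$. You instead plug in ${\rm d}({\rm L}_{ap})=\frac{1}{ap}\prod_{q<p}(1-1/q)$ (valid since $P_1(ap)=p$ for $p\ge P_1(a)$), factor out ${\rm d}({\rm L}_a)$, and prove the normalized identity $S=1$ by telescoping, using $\prod_{q\le x}(1-1/q)\to 0$ to kill the tail. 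The two arguments prove the same key identity, but yours is purely analytic while the paper's is a counting/partition argument; note that the paper's step of "taking the density of both sides" of a countable disjoint union tacitly needs the same vanishing tail product that you invoke, so your telescoping actually makes that point explicit. Two small remarks: Mertens' third theorem is much more than you need (divergence of $\sum_p 1/p$, or even the crude bound $\prod_{q\le x}(1-1/q)^{-1}\ge\sum_{n\le x}1/n$, suffices for the limit), and you should keep in mind the degenerate case $a=1$, where $P_1(1)=1$ is not prime; your computation still goes through verbatim there, with empty products equal to $1$.
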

\begin{proof}
Let $y>1$. Consider the set of positive integers without prime factors smaller than $y$, and partition it according to the smallest prime factor $q\ge y$. This gives the disjoint union,
\begin{align*}
\big\{b\in\N : P^+(b)\ge y\big\} \ = \ 
\bigcup_{q\ge y}\big\{bq\in\N :  P^-(b) \ge q\big\}.
\end{align*}
Taking the density of both sides, we find that
\begin{align}\label{eq:iddensity}
\prod_{p<y}\Big(1-\frac{1}{p}\Big) = \sum_{q\ge y}\frac{1}{q}\prod_{p<q}\Big(1-\frac{1}{p}\Big).
\end{align}
Now choosing $y=P^+(a)$, we divide \eqref{eq:iddensity} by $a$ to conclude ${\rm d}({\rm L}_{a})=\sum_{q\ge P^+(a)}{\rm d}({\rm L}_{aq})$.
\end{proof}
From Lemma~\ref{lem:dLareduce}, a simple induction argument on $j\ge1$ gives
\begin{align}\label{eq:dLareduce}
\sum_{\substack{\Omega(b)=j\\p(b)\ge P^+(a)}}{\rm d}({\rm L}_{ab}) = {\rm d}({\rm L}_{a}).
\end{align}
In particular when $a=1$, for any $j\ge1$ we have $\sum_{\Omega(b)=j}{\rm d}({\rm L}_{b})=1$. We shall refine this result in the lemma below.

\begin{lemma}\label{lem:vdLn}
Uniformly for $0<v<1$ and $a\in\Z_{>1}$, we have
\begin{align}
\sum_{q\ge P_1(a)^{\frac{1}{v}}}{\rm d}({\rm L}_{aq}) = v\,{\rm d}({\rm L}_a)\Big(1+O\Big(
\tfrac{1}{\log P_1(a)}\Big)\Big).
\end{align}
\end{lemma}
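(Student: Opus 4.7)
The plan is to observe that the sum on the left-hand side has an exact closed form via the identity \eqref{eq:iddensity} used in the proof of Lemma \ref{lem:dLareduce}, and then apply Mertens' third theorem at both scales $P_1(a)$ and $P_1(a)^{1/v}$.

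First, since every prime $q \ge P_1(a)^{1/v} > P_1(a)$ exceeds $P_1(a)$, we have $P_1(aq)=q$, so
\[ {\rm d}({\rm L}_{aq}) = \frac{1}{aq}\prod_{p<q}\bigl(1 - 1/p\bigr), \]
and hence
\[ \sum_{q\ge P_1(a)^{1/v}} {\rm d}({\rm L}_{aq}) = \frac{1}{a} \sum_{q\ge P_1(a)^{1/v}} \frac{1}{q}\prod_{p<q}\bigl(1-1/p\bigr). \]
Now I would apply the identity \eqref{eq:iddensity} with $y=P_1(a)^{1/v}$, which collapses the inner sum to a single Mertens-type product:
\[ \sum_{q\ge P_1(a)^{1/v}}{\rm d}({\rm L}_{aq}) = \frac{1}{a}\prod_{p < P_1(a)^{1/v}}\bigl(1-1/p\bigr). \]

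Second, compare this to the exact formula ${\rm d}({\rm L}_a) = \frac{1}{a}\prod_{p<P_1(a)}(1-1/p)$ by invoking Mertens' third theorem,
\[ \prod_{p<y}\bigl(1-1/p\bigr) = \frac{e^{-\gamma}}{\log y}\Bigl(1 + O\bigl(1/\log y\bigr)\Bigr), \]
at $y = P_1(a)$ and at $y = P_1(a)^{1/v}$. The ratio is then
\[ \frac{\sum_{q\ge P_1(a)^{1/v}}{\rm d}({\rm L}_{aq})}{{\rm d}({\rm L}_a)} = \frac{\log P_1(a)}{\log P_1(a)^{1/v}} \cdot \frac{1+O(v/\log P_1(a))}{1+O(1/\log P_1(a))} = v\Bigl(1+O\bigl(1/\log P_1(a)\bigr)\Bigr), \]
using $\log P_1(a)^{1/v} = (\log P_1(a))/v$ and absorbing $O(v/\log P_1(a))$ into $O(1/\log P_1(a))$ since $v\in(0,1)$. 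This is uniform in $v$ because the Mertens error term at $y = P_1(a)^{1/v} \ge P_1(a)\ge 2$ is bounded by a multiple of $1/\log P_1(a)$ for all $v\in(0,1)$.

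There is no real obstacle here: once one spots that the sum matches the right-hand side of \eqref{eq:iddensity} at level $y = P_1(a)^{1/v}$, everything reduces to one application of Mertens' theorem. The only point to watch is the uniformity in $v$, which is automatic because shrinking $v$ only makes $P_1(a)^{1/v}$ larger and thus improves the Mertens error.
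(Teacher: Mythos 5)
Your proposal is correct and follows essentially the same route as the paper: both apply the identity \eqref{eq:iddensity} at level $y=P_1(a)^{1/v}$ to collapse the sum into $\frac{1}{a}\prod_{p<P_1(a)^{1/v}}(1-1/p)$, and then use Mertens' product theorem at the two scales $P_1(a)$ and $P_1(a)^{1/v}$ to obtain the factor $v\bigl(1+O(1/\log P_1(a))\bigr)$, with the same uniformity observation in $v$.
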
\begin{proof}
Take $0<v<1$. We first recall Mertens' product theorem states that
\begin{align*}
\prod_{p< x}\Big(1-\frac{1}{p}\Big) = \frac{e^{-\gamma}}{\log x}\Big(1 + O\Big(\frac{1}{\log x}\Big)\Big)
\end{align*}
holds for $x \ge 2$. In particular, for $x=P_1(a) \ge 2$,
\begin{equation}\label{eq:PaPav}
\begin{split}
\prod_{P_1(a)\le p< P_1(a)^{\frac{1}{v}}}\Big(1-\frac{1}{p}\Big) & = \prod_{p<P_1(a)}\Big(1-\frac{1}{p}\Big)^{-1}\prod_{p< P_1(a)^{\frac{1}{v}}}\Big(1-\frac{1}{p}\Big) \\
& = \frac{\log P_1(a)}{\log P_1(a)^{\frac{1}{v}}}\Big(1 + O\Big(\frac{1}{\log P_1(a)}\Big)\Big)
\ = \ v\Big(1+O(1/\log P_1(a))\Big).
\end{split}
\end{equation}

So by \eqref{eq:PaPav} and \eqref{eq:iddensity} with $y=P_1(n)^{\frac{1}{v}}$,
\begin{align*}
\sum_{q\ge P_1(a)^{\frac{1}{v}}}\frac{1}{q}\prod_{p<q}\Big(1-\frac{1}{p}\Big) = \prod_{p<P_1(a)^{\frac{1}{v}}}\Big(1-\frac{1}{p}\Big) = v\Big(1+O(1/\log P_1(a))\Big)\prod_{p<P_1(a)}\Big(1-\frac{1}{p}\Big).
\end{align*}
Dividing through by $a$ completes the proof.
\end{proof}

\begin{lemma}\label{lem:mass}
For $k\ge1$, let $c_1\ge\cdots\ge c_k\ge 0$. If $d_1,D_1,E_1,\ldots,d_k,D_k,E_k\ge0$ satisfy $E_i\le \sum_{j=1}^i d_j \le D_i$ for all $1\le i\le k$ (and let $d_0=E_0=D_0=0$), then we have
\begin{align*}
\sum_{i=1}^k c_i(E_i - E_{i-1}) \ \le \
\sum_{i=1}^k c_i d_i \ \le \ \sum_{i=1}^k c_i(D_i - D_{i-1}).
\end{align*}
\end{lemma}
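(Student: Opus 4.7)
The plan is to prove the lemma by Abel summation (summation by parts), which converts the linear combination $\sum c_i d_i$ into a form that depends on the partial sums $S_i := \sum_{j=1}^i d_j$ rather than on the individual terms $d_i$. This is the natural move because the hypothesis controls the $d_i$'s only through their partial sums.

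Setting $S_0 = 0$ and writing $d_i = S_i - S_{i-1}$, I would apply Abel summation to obtain
\begin{align*}
\sum_{i=1}^k c_i d_i \ = \ \sum_{i=1}^k c_i(S_i - S_{i-1}) \ = \ c_k S_k + \sum_{i=1}^{k-1}(c_i - c_{i+1})\,S_i.
\end{align*}
The point is that all coefficients on the right are nonnegative: $c_k \ge 0$ by assumption, and $c_i - c_{i+1} \ge 0$ because $c_1 \ge \cdots \ge c_k$. So I may insert the sandwich bound $E_i \le S_i \le D_i$ term by term to obtain
\begin{align*}
c_k E_k + \sum_{i=1}^{k-1}(c_i - c_{i+1})\,E_i \ \le \ \sum_{i=1}^k c_i d_i \ \le \ c_k D_k + \sum_{i=1}^{k-1}(c_i - c_{i+1})\,D_i.
\end{align*}

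The final step is to undo the Abel transformation on each of the two bounding expressions. Since $E_0 = D_0 = 0$, the same identity gives $c_k E_k + \sum_{i=1}^{k-1}(c_i - c_{i+1})E_i = \sum_{i=1}^k c_i (E_i - E_{i-1})$ and likewise for $D$, yielding exactly the inequalities in the statement. There is no real obstacle here — the argument is a one-line Abel summation; the only subtlety is remembering to use both the monotonicity $c_i \ge c_{i+1}$ and the nonnegativity $c_k \ge 0$, which together make the partial-sum coefficients nonnegative so that the bounds $E_i \le S_i \le D_i$ can be inserted without sign issues.
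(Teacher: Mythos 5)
Your proposal is correct and is essentially the paper's own argument: the paper likewise rewrites $\sum_i c_i d_i$ by summation by parts as $c_k\sum_{j\le k}d_j+\sum_{i<k}(c_i-c_{i+1})\sum_{j\le i}d_j$, inserts the bounds $E_i\le\sum_{j\le i}d_j\le D_i$ using the nonnegativity of $c_k$ and of $c_i-c_{i+1}$, and then undoes the transformation to reach $\sum_i c_i(D_i-D_{i-1})$ and $\sum_i c_i(E_i-E_{i-1})$. No differences worth noting.
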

\begin{proof}
We have
\begin{align}\label{eq:monotoneid}
\sum_{i=1}^k c_id_i 
= \sum_{i=1}^kc_i\Big(\sum_{j=1}^id_j-\sum_{j=0}^{i-1}d_j\Big)= \sum_{i=1}^{k-1}(c_i-c_{i+1})\sum_{j=1}^{i}d_j \ + \ c_k\sum_{i=1}^{k}d_i
\end{align}
by summation by parts. Since $c_i - c_{i+1} \ge0$ and $\sum_{j\le i} d_j \le D_i$, from \eqref{eq:monotoneid} we obtain that
\begin{align*}
\sum_{i=1}^k c_id_i  \le \sum_{i=1}^{k-1} (c_i-c_{i+1})D_i \ + \ c_kD_k = \sum_{i=1}^{k} c_i (D_i-D_{i-1})
\end{align*}
holds. Similarly, since $\sum_{j\le i} d_j \ge E_i \ge0$, from \eqref{eq:monotoneid} we obtain that
\begin{align*}
\sum_{i=1}^{k} c_id_i \ge \sum_{i=1}^{k-1} (c_i-c_{i+1})E_i \ + \ c_kE_k = \sum_{i=1}^{k} c_i (E_i-E_{i-1})
\end{align*}
holds.
\end{proof}

To handle the contribution of smooth numbers, we use a simple bound of Erd\H{o}s and S\'ark\"ozy \cite[Lemma~2]{ErdSark}, whose proof we provide for completeness.

\begin{lemma}[Erd\H{o}s--S\'ark\"ozy] \label{lem:ErdSark}
For any $k\ge1$, $y>1$, we have
\begin{align*}
\sum_{\substack{\Omega(n)=k\\P_1(n)<e^y}}\frac{1}{n} \ \ll \ y^2 \,k/2^{k}.
\end{align*}
\end{lemma}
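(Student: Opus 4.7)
My plan is a Rankin-type bound applied to a generating function. First I would introduce, for $r \in (0,2)$,
\[
F_y(r) := \sum_{P_1(n) < e^y} \frac{r^{\Omega(n)}}{n} = \prod_{p < e^y} \left(1 - \frac{r}{p}\right)^{-1},
\]
which converges absolutely (the only potentially delicate factor, at $p=2$, satisfies $r/2 < 1$). Since every Taylor coefficient of $F_y(r)$ in $r$ is non-negative, restricting to $\Omega(n)=k$ costs at most a factor of $r^{-k}$:
\[
\sum_{\substack{\Omega(n)=k\\ P_1(n)<e^y}} \frac{1}{n} \le r^{-k} F_y(r).
\]

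Next I would estimate $F_y(r)$ by splitting off the $p=2$ factor, which contributes $(1-r/2)^{-1} = 2/(2-r)$. For $p \ge 3$ and $r \in [1,2)$ we have $r/p \le 2/3$, so $-\log(1 - r/p) = r/p + O(1/p^2)$ uniformly. Summing over $3 \le p < e^y$ and using Mertens' estimate $\sum_{p < e^y} 1/p = \log y + O(1)$ gives
\[
\prod_{3\le p<e^y}\left(1-\frac{r}{p}\right)^{-1} \ll \exp\left(r \sum_{3\le p<e^y}\frac{1}{p}\right) \ll y^r
\]
uniformly for $r \in [1,2)$ and $y > 1$. Combining,
\[
\sum_{\substack{\Omega(n)=k\\ P_1(n)<e^y}} \frac{1}{n} \ll \frac{y^r}{(2-r)\,r^k}.
\]

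Finally, I would optimize by choosing $r = 2 - 1/k \in [1, 2)$, valid for every $k \ge 1$. Then $(2-r)^{-1} = k$ supplies the linear factor in the target; $y^r \le y^2$ since $y > 1$; and $r^{-k} = 2^{-k}(1 - 1/(2k))^{-k} \ll 2^{-k}$, because $(1-1/(2k))^k$ is bounded below by a positive constant (converging to $e^{-1/2}$). Multiplying these three estimates yields $\ll y^2 k/2^k$, as required.

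The main subtlety lies in the calibration of $r$: the Rankin bound degrades as $(2-r)^{-1}$ when $r \to 2^-$ and as $r^{-k}$ when $r$ retreats from $2$, so $r$ must be tuned to balance these opposing losses. Any coarser choice produces either $y^{2+\epsilon}$ or $(2-c)^{-k}$ with $c > 0$, both of which overshoot the target; the choice $r = 2-1/k$ is precisely what matches the $y^2 k/2^k$ appearing in the statement.
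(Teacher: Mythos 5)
Your Rankin-type argument is correct: the positivity step, the uniform estimate $\prod_{3\le p<e^y}(1-r/p)^{-1}\ll y^r$ for $r\in[1,2)$ via Mertens, and the calibration $r=2-1/k$ (with $(1-1/(2k))^{-k}$ bounded) all hold, and together they give exactly $\ll y^2 k/2^k$. The paper reaches the same bound by a shortcut that avoids any calibration of $r$: it multiplies the sum through by $2^k=2^{\Omega(n)}$ and majorizes it by the truncated Euler product $\prod_{p<e^y}\bigl(1+\tfrac{2}{p}+\cdots+\tfrac{2^k}{p^k}\bigr)$, which is legitimate because $\Omega(n)=k$ forces every prime exponent to be at most $k$. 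The would-be divergence at $p=2$ (your $(1-r/2)^{-1}$ blowing up as $r\to2^-$) is then a finite geometric-type factor equal to $k+1$, while the odd primes give $\prod_{2<p<e^y}(1-2/p)^{-1}\ll y^2$ by Mertens; dividing by $2^k$ finishes. So both proofs extract the factor $k$ from the prime $2$ and the factor $y^2$ from Mertens over the remaining primes; the paper does it by truncating the $p=2$ factor at exponent $k$, whereas you do it by shifting the Rankin weight to $r=2-1/k$ so that $(2-r)^{-1}=k$. The paper's version is a touch more elementary (a single finite product inequality, no optimization), while yours is more flexible: the same template with a general weight $r$ adapts immediately to other bases or to situations where no a priori bound on the prime exponents is available.
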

\begin{proof}
Observe that $2^k$ times our given sum is bounded by the following Euler product,
\begin{align*}
\sum_{\substack{\Omega(n)=k\\P_1(n)<e^y}}\frac{2^k}{n} & \le \prod_{p<e^y}\Big(1 + \frac{2}{p}+\cdots+\frac{2^k}{p^k}\Big)\\
& \le (k+1)\prod_{2<p<e^y}\Big(1 - \frac{2}{p}\Big)^{-1} \ll ky^2
\end{align*}
by Mertens' product theorem. Dividing by $2^k$ completes the proof.
\end{proof}

\begin{corollary}\label{cor:ksmooth}
For any $1\le j\le k$ and $y>1$, we have
\begin{align*}
\sum_{\substack{\Omega(n)=k\\P_{j+1}(n)<e^{y}}} \frac{1}{n \log n} \ll \sum_{\substack{\Omega(n)=k\\P_{j+1}(n)<e^{y}}}{\rm d}({\rm L}_n) \ll y^2\, k\,2^{j-k}.
\end{align*}
\end{corollary}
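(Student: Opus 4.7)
The plan is to decompose each $n$ in the sum multiplicatively into a ``smooth part'' carrying the $k-j$ smallest prime factors and a ``rough part'' carrying the $j$ largest, then control each piece separately using Lemma \ref{lem:ErdSark} together with Erd\H{o}s's uniform bound $f_j = O(1)$.

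Concretely, every $n$ with $\Omega(n) = k$ and $P_{j+1}(n) < e^y$ factors uniquely as $n = m n'$, where $m := P_{j+1}(n) \cdots P_k(n)$ and $n' := P_1(n) \cdots P_j(n)$. This factorization is a bijection onto the set of pairs $(m, n')$ with $\Omega(m) = k-j$, $\Omega(n') = j$, $P_1(m) < e^y$, and $p(n') \ge P_1(m)$. Hence
\[
\sum_{\substack{\Omega(n)=k\\ P_{j+1}(n) < e^y}} \frac{1}{n \log n}
\ = \ \sum_{\substack{\Omega(m) = k-j\\ P_1(m) < e^y}} \frac{1}{m} \sum_{\substack{\Omega(n') = j\\ p(n') \ge P_1(m)}} \frac{1}{n' \log(m n')}.
\]

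For $j \ge 1$, I would drop $m$ from the logarithm using $\log(m n') \ge \log n'$, so the inner sum is dominated by $\sum_{\Omega(n')=j} 1/(n' \log n') = f_j$, which is $O(1)$ uniformly in $j$ by Erd\H{o}s's 1935 theorem cited in the introduction. Then Lemma \ref{lem:ErdSark}, applied with $k-j$ in place of $k$, gives
\[
\sum_{\substack{\Omega(m) = k-j\\ P_1(m) < e^y}} \frac{1}{m} \ \ll \ y^2(k-j)\,2^{-(k-j)} \ \le \ y^2 k\, 2^{j-k},
\]
which is exactly the claimed bound. The remaining case $j=0$ is degenerate ($n'=1$) and handled directly by the inequality $\log n \ge k \log 2$ followed by Lemma \ref{lem:ErdSark}, which even yields a slightly stronger $\ll y^2/2^k$.

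There is no real obstacle to this argument once the decomposition is chosen correctly: the only mildly delicate point is making sure the inner $n'$-sum does not depend badly on $P_1(m)$, and using $1/\log(mn') \le 1/\log n'$ (rather than attempting to bound $\sum 1/n'$ directly, which would diverge) is precisely what uncouples the two sums and reduces everything to the Erd\H{o}s--S\'ark\"ozy estimate plus Erd\H{o}s's uniform boundedness of $f_j$.
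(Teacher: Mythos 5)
Your argument is correct, and it shares the paper's key structural step: the same decomposition of each $n$ into a rough part carrying the $j$ largest prime factors and a smooth part carrying the remaining $k-j$, followed by the Erd\H{o}s--S\'ark\"ozy bound (Lemma \ref{lem:ErdSark}) for the smooth part. Where you diverge is in how the rough-part sum is controlled. The paper stays entirely inside the density framework of Section 4: it bounds $1/(n\log n)\ll {\rm d}({\rm L}_n)$, collapses the sum over the rough part \emph{exactly} via the telescoping identity \eqref{eq:dLareduce} (a consequence of Lemma \ref{lem:dLareduce}), and then uses ${\rm d}({\rm L}_a)\ll 1/a$; this keeps the probabilistic section self-contained apart from Mertens' theorem. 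You instead uncouple the two sums via $\log(mn')\ge\log n'$ and bound the inner sum by $f_j$, invoking Erd\H{o}s's 1935 uniform bound (or Zhang's $f_j\le f_1$). That is logically legitimate and not circular, since it is an external classical theorem; it buys a shorter proof that needs none of the L-multiple machinery, at the cost of importing boundedness of the very family of sums that Section 4 is re-deriving by independent means, which runs somewhat against the stated aim of a self-contained probabilistic treatment. Two small housekeeping points: the case $j=k$ cannot literally cite Lemma \ref{lem:ErdSark} with $k-j=0$ (in both your write-up and the paper's phrasing), but there the outer sum is the single term $m=1$ and the bound $1\le y^2k$ is trivial, and in the application $j=\lfloor k/4\rfloor<k$ anyway; and your remark about $j=0$ is outside the stated range $1\le j\le k$, so it can be dropped.
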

\begin{proof}
First, for each $n$ with $\Omega(n)=k$, one can factor $n$ uniquely as $ab$ with $\Omega(b)=j$ and $p(b)\ge P_1(a)$ (namely take $b= \prod_{i=1}^{j}P_i(n)$ and $a=n/b$). Thus by \eqref{eq:dLareduce} we have
\begin{align*}
\sum_{\substack{\Omega(n)=k\\P_{j+1}(n)<e^{y}}}{\rm d}({\rm L}_n) = \sum_{\substack{\Omega(a)=k-j\\P_1(a)<e^{y}}} \sum_{\substack{\Omega(b)=j\\p(b)\ge P_1(a)}}{\rm d}({\rm L}_{ab})= \sum_{\substack{\Omega(a)=k-j\\P_1(a)<e^{y}}}{\rm d}({\rm L}_a).
\end{align*}
On the right-hand side of the above identity we apply the simple bound  ${\rm d}({\rm L}_a) \ll 1/a$, and on the left-hand side we apply ${\rm d}({\rm L}_n) \gg 1/(n \log n)$. This gives
\begin{align*}
\sum_{\substack{\Omega(n)=k\\P_{j+1}(n)<e^{y}}} \frac{1}{n \log n}\ll \sum_{\substack{\Omega(n)=k\\P_{j+1}(n)<e^{y}}}{\rm d}({\rm L}_n) \ll \sum_{\substack{\Omega(a)=k-j\\P_1(a)<e^{y}}}\frac{1}{a} \ll y^2\, k\,2^{j-k}
\end{align*}
by Lemma~\ref{lem:ErdSark} with $k$ replaced by $k-j$.
\end{proof}

\subsection{Poof of Proposition \ref{prop:fkprob}}

Let $k\ge1$ be sufficiently large. We shall choose $y=2^j$ for $j=\lfloor k/4\rfloor$, and $N=4^k$. Let $f'_k$ denote the sum $f_k$ restricted by $P_{j+1}(n)\ge e^{y}$. Thus by Corollary \ref{cor:ksmooth},
\begin{align}\label{eq:fkfk1}
f_k=\sum_{\Omega(n)=k}\frac{1}{n\log n} = f_k' + O(y^2 k 2^{j-k}) = f_k' + O(k/2^{k/4})
\end{align}
where, by Mertens' product theorem,
\begin{align} \label{eq:fk1uj0}
f_k' &:= \sum_{\substack{\Omega(n)=k\\P_{j+1}(n)\ge  e^{y}}}\frac{1}{n\log n}
= \sum_{\substack{\Omega(n)=k\\P_{j+1}(n)\ge e^{y}}}\big(e^\gamma + \tfrac{O(1)}{\log P_1(n)}\big)\frac{\log P_1(n)}{\log n}{\rm d}({\rm L}_n).
\end{align}

Next, we rewrite the identity $n=P_1(n)\cdots P_k(n)$ as
\begin{align*}
\frac{\log n}{\log P_1(n)} &=  1 + \frac{\log P_2(n)}{\log P_1(n)}\Big(1+\cdots\Big(1+\frac{\log P_{j+1}(n)}{\log P_j(n)} \Big(1+\frac{\log\big(P_{j+2}(n)\cdots P_k(n)\big)}{\log P_{j+1}(n)}\Big)\Big)\cdots\Big).
\end{align*}
Taking the reciprocal of identity above gives
\begin{align*}
\frac{\log P_1(n)}{\log n} & = u_{j+1}\Big(\tfrac{\log P_2(n)}{\log P_1(n)},\ldots,\tfrac{\log P_{j+1}(n)}{\log P_j(n)},\tfrac{\log(P_{j+2}(n)\cdots P_k(n))}{\log P_{j+1}(n)}\Big),
\end{align*}
for the functions $u_j
\colon \R^j\to\R$ given by
\begin{align}\label{eq:ukdef}
u_j(x_1,\ldots,x_j) := \frac{1}{1+x_1(1+x_2(\cdots(1+x_j)\cdots))}.
\end{align}
In particular, from $P_1(n)\ge\ldots \ge P_k(n)$ we infer the inequalities
\begin{align}
\frac{\log P_1(n)}{\log n} & \le u_j\Big(\tfrac{\log P_2(n)}{\log P_1(n)},\ldots,\tfrac{\log P_{j+1}(n)}{\log P_j(n)}\Big), \label{eq:ujupper}\\
\frac{\log P_1(n)}{\log n} & \ge u_j\Big(\tfrac{\log P_2(n)}{\log P_1(n)},\ldots,\tfrac{\log P_{j+1}(n)}{\log P_j(n)}(k-j)\Big). \label{eq:lowerupper}
\end{align}
By \eqref{eq:ujupper}, we see \eqref{eq:fk1uj0} implies that
\begin{align} \label{eq:fk1uj}
f_k' \le \sum_{\substack{\Omega(a)=k-j\\P_1(a)\ge e^{y}}}\big(e^\gamma + \tfrac{O(1)}{\log P_1(a)}\big)\sum_{P_1(a) \le p_j\le \cdots\le  p_1}
u_j\Big(\tfrac{\log p_2}{\log p_1},\ldots,\tfrac{\log P_1(a)}{\log p_j}\Big){\rm d}({\rm L}_{ap_j\cdots p_1}).
\end{align}

\begin{lemma}\label{lem:inductmonotone}
There is an absolute constant $C>1$ such that for any $a\in \Z_{>1}$,
\begin{align*}
\sum_{P_1(a) \le p_j\le \cdots\le  p_1}
u_j\Big(\tfrac{\log p_2}{\log p_1},\ldots,\tfrac{\log P_1(a)}{\log p_j}\Big){\rm d}({\rm L}_{ap_j\cdots p_1}) 
\le \frac{\big(1 + \tfrac{C}{\log P_1(a)}\big)^j}{N^j}\sum_{i_1,\ldots, i_j =1}^N u_j\Big(\tfrac{i_1-1}{N},\ldots,\tfrac{i_j-1}{N}\Big){\rm d}({\rm L}_{a}).
\end{align*}
\end{lemma}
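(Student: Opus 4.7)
The plan is to iterate the peeling of the primes $p_1, p_2, \ldots, p_j$ from largest to smallest, at each step converting a prime sum into a discrete $N$-point Riemann sum via Lemmas \ref{lem:vdLn} and \ref{lem:mass}, at the cost of a multiplicative error $1 + C'/\log P_1(a)$ for some absolute constant $C' > 0$. After $j$ peelings these factors multiply to $(1+C'/\log P_1(a))^j$, giving the claim with $C = \max(C', 2)$.

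For the first step, fix $p_2 \le p_3 \le \cdots \le p_j$ with $p_j \ge P_1(a)$, and consider the innermost sum over $p_1 \ge p_2$. Set $b := a\,p_j \cdots p_2$, so that $P_1(b) = p_2$ and ${\rm d}({\rm L}_{ap_j \cdots p_1}) = {\rm d}({\rm L}_{bp_1})$. The function $\phi(x_1) := u_j(x_1, x_2, \ldots, x_j)$ is monotone decreasing in $x_1 \in [0,\infty)$ by \eqref{eq:ukdef}. Parametrising $x_1 = \log p_2/\log p_1 \in (0,1]$, Lemma \ref{lem:vdLn} applied with base $b$ and $v = i/N$ yields
\[\sum_{\substack{p_1 \ge p_2 \\ x_1(p_1) \le i/N}} {\rm d}({\rm L}_{bp_1}) \,=\, \frac{i}{N}\,{\rm d}({\rm L}_b)\,\bigl(1 + O(1/\log p_2)\bigr) \qquad (1 \le i \le N).\]
Invoking Lemma \ref{lem:mass} with $c_i := \phi((i-1)/N)$ (decreasing in $i$), $d_i$ the mass of primes $p_1$ with $x_1(p_1) \in ((i-1)/N, i/N]$, and $D_i := \tfrac{i}{N}{\rm d}({\rm L}_b)(1 + C'/\log p_2)$ for $C'$ absorbing the implied constant, I obtain
\[\sum_{p_1 \ge p_2} \phi(x_1(p_1))\,{\rm d}({\rm L}_{bp_1}) \,\le\, \frac{{\rm d}({\rm L}_b)}{N}\Big(1 + \frac{C'}{\log p_2}\Big) \sum_{i_1=1}^N u_j\Big(\frac{i_1-1}{N},\, x_2,\, \ldots,\, x_j\Big).\]

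I then iterate: peel $p_2$ (with active variable $x_2 = \log p_3/\log p_2$), then $p_3, \ldots, p_{j-1}$, and finally $p_j$ (where $x_j = \log P_1(a)/\log p_j$ and Lemma \ref{lem:vdLn} is applied directly to $a$). At the $\ell$-th step the function being discretized is $\sum_{i_1, \ldots, i_{\ell-1}} u_j\bigl(\tfrac{i_1-1}{N}, \ldots, \tfrac{i_{\ell-1}-1}{N},\, x_\ell, x_{\ell+1}, \ldots, x_j\bigr)$, a sum of terms each monotone decreasing in the current variable $x_\ell$, so Lemma \ref{lem:mass} again yields the required upper bound. The successive density factors ${\rm d}({\rm L}_{ap_j \cdots p_\ell})$ telescope through the peeling down to ${\rm d}({\rm L}_a)$, while each error factor $1+C'/\log p_{\ell+1}$ (respectively $1+C'/\log P_1(a)$ at the final step) is uniformly bounded by $1+C'/\log P_1(a)$, since every peeled prime satisfies $p_{\ell+1} \ge p_j \ge P_1(a)$, and can therefore be pulled outside the remaining sums.

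The main obstacle is bookkeeping: one must check at every step that (a) the current base has $P_1$ equal to the next prime to be peeled, so Lemma \ref{lem:vdLn} applies verbatim; (b) the function being discretized remains monotone decreasing in the active variable, so Lemma \ref{lem:mass} produces the correct upper bound; and (c) the $j$ multiplicative error factors combine as a clean product bounded by $(1 + C'/\log P_1(a))^j$, which follows from $p_\ell \ge P_1(a)$. Once these points are verified, the iterated peeling produces the claimed inequality.
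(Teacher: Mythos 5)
Your proposal is correct and follows essentially the same route as the paper: peel the primes $p_1,\dots,p_j$ in turn, use Lemma \ref{lem:vdLn} (with base $ap_j\cdots p_{\ell+1}$, whose largest prime factor is the next threshold) to control the partial density masses, and apply Lemma \ref{lem:mass} with the decreasing coefficients $u_j(\cdot)$ evaluated at left endpoints, bounding each error factor by $1+C/\log P_1(a)$ since all peeled primes exceed $P_1(a)$. The paper formalizes your iteration as the single generic inequality \eqref{eq:inductmono} proved uniformly in $r$, but the substance is identical.
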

\begin{proof}
For each $1\le r\le j$, it suffices to show that
\begin{equation}\label{eq:inductmono}
\begin{split}
&N^{1-r}\sum_{i_1,\ldots i_{r-1}=1}^N\sum_{P_1(a) \le p_j\le \cdots \le p_{r+1}\le  p_r} u_j\Big(\tfrac{i_1-1}{N},\ldots,\tfrac{i_{r-1}-1}{N},\tfrac{\log p_{r+1}}{\log p_r},\ldots,\tfrac{\log P_1(a)}{\log p_j}\Big)\, {\rm d}({\rm L}_{ap_j\cdots p_r}) \\
&\le N^{-r}\sum_{i_1,\ldots,i_r=1}^N\sum_{P_1(a) \le p_j\le \cdots\le  p_{r+1}}  u_j\Big(\tfrac{i_1-1}{N},\ldots,\tfrac{i_{r}-1}{N},\tfrac{\log p_{r+2}}{\log p_{r+1}},\ldots,\tfrac{\log P_1(a)}{\log p_j}\Big)\, {\rm d}({\rm L}_{ap_j\cdots p_{r+1}}) \Big(1 + \tfrac{C}{\log P_1(a)}\Big)
\end{split}
\end{equation}
holds. Indeed, iterating \eqref{eq:inductmono} (with each $r=1,2,\ldots, j$ in turn) completes the proof of the lemma.

To show that \eqref{eq:inductmono} holds, fix indices $i_1,\ldots i_{r-1}\le N$ and primes $p_j\le \cdots\le  p_{r+1}$ ($p_j \ge P_1(a)$). Define $c_{i_r}$ and $d_{i_r}$ by
\begin{align*}
c_{i_r} &:= u_j\Big(\tfrac{i_1-1}{N},\ldots,\tfrac{i_{r}-1}{N},\tfrac{\log p_{r+2}}{\log p_{r+1}},\ldots,\tfrac{\log P_1(a)}{\log p_j}\Big)\\
d_{i_r} &:= \sum_{p_r\in [p_{r+1}^{N/i_r}, p_{r+1}^{N/(i_r-1)})} {\rm d}({\rm L}_{ap_j\cdots p_r}).
\end{align*}
(For $i_r=1$, the range of $p_r$ in the definition of $d_{i_r}$ is to be interpreted as $[p_{r+1}^{N},\infty)$.)
Note for any $u\le N$, by Lemma~\ref{lem:vdLn} we have
\begin{align*}
\sum_{i_r=1}^u d_{i_r} = \sum_{p_r \ge p_{r+1}^{N/u}}{\rm d}({\rm L}_{ap_j\cdots p_r}) 
\le \frac{u}{N}{\rm d}({\rm L}_{ap_j\cdots p_{r+1}})\Big(1 + \tfrac{C}{\log P_1(a)}\Big) =: D_u.
\end{align*}
In particular $D_{u}-D_{u-1} = \frac{1}{N}{\rm d}({\rm L}_{ap_j\cdots p_{r+1}})(1+C/\log P_1(a))$. Splitting up the sum over $p_r\ge p_{r+1}$ below according to the $i_r$ for which $p_r\in [p_{r+1}^{N/i_r}, p_{r+1}^{N/(i_{r}-1)})$ holds, and then applying Lemma \ref{lem:mass}, we find that
\begin{equation}\label{eq:inductmonotone}
\begin{split}
\sum_{p_r\ge p_{r+1}} & u_j\Big(\tfrac{i_1-1}{N},\ldots,\tfrac{i_{r-1}-1}{N},\tfrac{\log p_{r+1}}{\log p_r},\ldots,\tfrac{\log P_1(a)}{\log p_j}\Big)\, {\rm d}({\rm L}_{ap_j\cdots p_r}) \\
& \le \sum_{i_r=1}^Nc_{i_r}\sum_{p_r\in [p_{r+1}^{N/i_r}, p_{r+1}^{N/(i_{r}-1)})}  {\rm d}({\rm L}_{ap_j\cdots p_r}) = \sum_{i_r=1}^N c_{i_r}d_{i_r} \\
&\le \sum_{i_r=1}^N c_{i_r}(D_{i_r}-D_{i_r-1})
= \Big(1 + \tfrac{C}{\log P_1(a)}\Big)\frac{1}{N}\sum_{i_r=1}^N c_{i_r}{\rm d}({\rm L}_{ap_j\cdots p_{r+1}}) \\
& \ = \Big(1 + \tfrac{C}{\log P_1(a)}\Big)\frac{1}{N}\sum_{i_r=1}^N u_j\Big(\tfrac{i_1-1}{N},\ldots,\tfrac{i_{r}-1}{N},\tfrac{\log p_{r+2}}{\log p_{r+1}},\ldots,\tfrac{\log P_1(a)}{\log p_j}\Big)\, {\rm d}({\rm L}_{ap_j\cdots p_{r+1}}). 
\end{split}
\end{equation}
Summing \eqref{eq:inductmonotone} over $i_1,\ldots, i_{r-1}\le N$ and $p_j\le \cdots\le  p_{r+1}$, we obtain \eqref{eq:inductmono} as desired.
\end{proof}

Plugging Lemma~\ref{lem:inductmonotone} into \eqref{eq:fk1uj} we obtain that
\begin{align}
f_k' &\le e^\gamma\sum_{\substack{\Omega(a)=k-j\\P_1(a)\ge e^{y}}}{\rm d}({\rm L}_{a})\,\frac{\big(1 + C/y\big)^{j+1}}{N^j}\sum_{i_1,\ldots, i_j =1}^N u_j\Big(\tfrac{i_1-1}{N},\ldots,\tfrac{i_j-1}{N}\Big)
\end{align}
holds. By Corollary \ref{cor:ksmooth} and \eqref{eq:dLareduce} with $a=1$,
\begin{align*}
\sum_{\substack{\Omega(a)=k-j\\P_1(a)\ge e^{y}}}{\rm d}({\rm L}_a) = \sum_{\substack{\Omega(a)=k-j}}{\rm d}({\rm L}_a) + O(y^2 k 2^{j-k}) = 1 +O(k/2^{k/4}),
\end{align*}
recalling the definitions $y=2^j$, $j=\lfloor k/4\rfloor$. Thus
\begin{align}\label{eq:f1kleIj0}
f_k' &\le e^\gamma\,\frac{\big(1 + O(k/2^{k/4})\big)^{j+1}}{N^j}\sum_{i_1,\ldots, i_j =1}^N u_j\Big(\tfrac{i_1-1}{N},\ldots,\tfrac{i_j-1}{N}\Big).
\end{align}

By an analogous argument (using the lower bound in \eqref{eq:lowerupper}, and $E_i = \frac{i}{N}\, {\rm d}({\rm L}_{p_2\cdots p_ja}) (1 - C/\log P_1(a))$ in Lemma~\ref{lem:mass} instead of $D_i$), we may obtain a similar lower bound
\begin{align}\label{eq:f1kgeIj0}
f_k' &\ge e^\gamma\frac{\big(1 - O(k/2^{k/4}))\big)^{j+2}}{N^j}\sum_{i_1,\ldots, i_j =1}^N u_j\Big(\tfrac{i_1}{N},\ldots,\tfrac{i_j}{N}(k-j)\Big).
\end{align}

Now for a sequence $(c_j)_j$, define the integral
\begin{align}\label{eq:Ijcj}
I_j(c_j) := \int_{[0,1]^j} u_j(x_1,\ldots,x_{j-1},c_jx_j)\, \dd x_1\cdots \dd x_j.
\end{align}

Observe that the sum in \eqref{eq:f1kleIj0} is the upper Riemann sum for the integral $I_j(1)$, noting that $u_j:[0,1]^j\to[0,1]$ is decreasing in each component. And since the upper and lower Riemann sums (which squeeze $I_j(1)$) overlap in $(N-1)^j$ points, their difference is $\ll (N^j - (N-1)^j) / N^j = 1-(1-1/N)^j \ll j/N$. In particular the sum in \eqref{eq:f1kleIj0} equals $I_j(1) + O(j/N)$.

Similarly \eqref{eq:f1kleIj0} is the lower Riemann sum for $I_j(k-j)$. Thus we obtain
\begin{align*}
N^{-j}\sum_{i_1,\ldots, i_j =1}^N u_j\Big(\tfrac{i_1-1}{N},\ldots,\tfrac{i_j-1}{N}\Big) &= I_j(1) + O(j/N),\\
N^{-j}\sum_{i_1,\ldots, i_j =1}^N u_j\Big(\tfrac{i_1}{N},\ldots,\tfrac{i_j}{N}(k-j)\Big) &= I_j(k-j) + O(j/N).
\end{align*}
Recalling $N=4^k$ and $j=\lfloor k/4\rfloor$, we see that \eqref{eq:f1kleIj0} and \eqref{eq:f1kgeIj0} become
\begin{align}
f'_k &\le (e^{\gamma}+O(k/2^{k/4}))\,I_j(1), \label{eq:f1kleIj}\\
f'_k &\ge (e^{\gamma}-O(k/2^{k/4}))\, I_j(k-j).\label{eq:f1kgeIj}
\end{align}

In the next section, we shall establish the following quantitative result.
\begin{theorem}\label{theo:problem1}
Let $(c_j)_j$ be any nonnegative sequence. Then $I_j(c_j)$, as in \eqref{eq:Ijcj}, satisfies
\begin{align*}
    I_j(c_j) = e^{-\gamma} + O\left(2^{-j}\left(1 + c_j\right)\right).
\end{align*}
\end{theorem}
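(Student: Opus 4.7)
The plan is to give a probabilistic reformulation that reduces the problem to computing $\E[1/V]$ for a single limiting random variable $V$, with all $c_j$-dependence absorbed into a simple error estimate. Let $X_1, X_2, \ldots$ be i.i.d.\ $U(0,1)$ random variables, set $S_k = X_1 \cdots X_k$, and expand the nested fraction to obtain
\[ u_j(X_1, \ldots, X_{j-1}, c_j X_j) = \frac{1}{U_j}, \qquad U_j := 1 + S_1 + \cdots + S_{j-1} + c_j S_j. \]
Hence $I_j(c_j) = \E[1/U_j]$. The natural limit as $j \to \infty$ is
\[ V := 1 + \sum_{k \geq 1} S_k, \]
which converges a.s.\ with $\E[V] = 2$. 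The proof then splits into showing (a) $|\E[1/U_j] - \E[1/V]| = O(2^{-j}(1 + c_j))$, and (b) $\E[1/V] = e^{-\gamma}$.

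Step (a) is a direct comparison. Introduce $W := \sum_{k \geq 1} X_{j+1} \cdots X_{j+k}$, which is independent of $(X_1, \ldots, X_j)$ and has the same distribution as $V - 1$. Expanding gives $V - U_j = S_j((1 - c_j) + W)$, so
\[ |V - U_j| \leq S_j(1 + c_j + W). \]
Since $U_j, V \geq 1$, we have $|1/U_j - 1/V| \leq |V - U_j|$, and taking expectations yields
\[ |\E[1/U_j] - \E[1/V]| \leq \E[S_j](1 + c_j + \E[W]) = 2^{-j}(2 + c_j) = O(2^{-j}(1 + c_j)), \]
using $\E[S_j] = 2^{-j}$ and $\E[W] = \E[V] - 1 = 1$.

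Step (b) uses that $V$ obeys the recursive distributional equation $V \stackrel{d}{=} 1 + XV'$ with $X \sim U(0,1)$ independent of $V' \stackrel{d}{=} V$. Equivalently, $W = V - 1$ satisfies $W \stackrel{d}{=} X(1 + W')$, and a short computation shows the density $g$ of $W$ satisfies
\[ g(w) = \int_{\max(0, w-1)}^{\infty} \frac{g(w')}{1 + w'}\, \dd w', \qquad w \geq 0. \]
This forces $g$ to be constant on $[0, 1]$ with value $g(0) = \E[1/(1+W)] = \E[1/V]$, and differentiation for $w > 1$ gives $g'(w) = -g(w-1)/w$. This is the Dickman differential-delay equation, so by uniqueness $g(w) = g(0)\rho(w)$ for all $w \geq 0$, where $\rho$ is the Dickman function. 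The normalization $\int_0^\infty g = 1$ together with the classical identity $\int_0^\infty \rho(u)\, \dd u = e^\gamma$ then yields $g(0) = e^{-\gamma}$, hence $\E[1/V] = e^{-\gamma}$.

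The main obstacle is the normalization $\int_0^\infty \rho(u)\, \dd u = e^\gamma$; for a self-contained treatment this can be obtained from the Laplace transform $\hat\rho(s) = \exp(\gamma - \int_0^s (1 - e^{-t})/t\, \dd t)$ evaluated at $s = 0$. One could alternatively bypass $\rho$ entirely by representing $\{-\log S_k\}_{k \geq 1}$ as a rate-one Poisson point process to obtain $\E[e^{-s(V-1)}] = \exp(-\int_0^s (1-e^{-u})/u\, \dd u)$, and then recover $\E[1/V] = \int_0^\infty e^{-s} \E[e^{-s(V-1)}]\, \dd s = e^{-\gamma}$ directly.
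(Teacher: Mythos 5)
Your argument is correct and reaches the same error bound $2^{-j}(2+c_j)$ as the paper, but its two halves line up with the paper in different ways, so a brief comparison is worthwhile. For the error term, the paper couples $S_{n-1}(V_{n-1})$ with $S_{n-1}(T)$ for an \emph{independent} copy $T$ of $S_\infty$, invoking the fixed-point identity $S_\infty \overset{(d)}{=} 1+US_\infty$ together with the Lipschitz constant $\prod_{i\le n-1}U_i$ of the iterated map; you instead compare $U_j$ directly with the almost-sure limit $V=1+\sum_{k\ge1}S_k$ realized on the same probability space, using only that the tail $W$ is independent of $X_1,\dots,X_j$ with $\E[W]=1$. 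That is a mild but genuine simplification: no distributional identity is needed at this stage, and the bound is identical. For the value of the limit, your route is essentially the paper's \emph{second} proof combined with its closing remark: the recursion $W\overset{(d)}{=}X(1+W')$ gives the integral equation for the density, which is constant on $[0,1]$ and satisfies the Dickman delay equation for $w>1$, hence equals $e^{-\gamma}\rho$ after normalizing with $\int_0^\infty\rho(u)\,\dd u=e^{\gamma}$; and evaluating the integral equation at $w=0$ gives $\E[1/V]=g(0)$ with no Tauberian step, exactly as in the paper's remark. The paper's primary identification is different and more self-contained: it derives $te^{t}\phi(t)=\int_0^t\phi(v)\,\dd v$ for $\phi(t)=\E[e^{-tS_\infty}]$, solves the resulting ODE, and inputs only Euler's integral $\gamma=-\int_0^\infty e^{-u}\log u\,\dd u$, so it never needs the Dickman normalization as an external fact; your Poisson-process alternative, via Campbell's formula, would reproduce precisely that Laplace transform and is the closer analogue of the paper's main proof. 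One small point to tighten: you write the recursion directly for the density $g$; to justify that $W$ is absolutely continuous and that the differentiation step is legitimate, first derive the corresponding equation for the distribution function (as the paper does, obtaining $\mathbb{P}(W\le s)=cs$ on $[0,1]$) and then differentiate.
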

\begin{remark}
The qualitative result that $I_n(c_n)=e^{-\gamma}+o(1)$ may be established in the wider regime where $\limsup_{n \to \infty} \frac{1}{n} \log c_n < 1$ holds (see Lemma~\ref{lem:I_error} for more precise statement) but the bound above is sufficient for the purpose of this article. 
\end{remark}

In particular, Theorem~\ref{theo:problem1} gives
\begin{align*}
I_j(1)  &= e^{-\gamma}+O(2^{-j}),\\
I_j(k-j) &=e^{-\gamma}+O(k/2^{j}).
\end{align*}
Thus plugging into \eqref{eq:f1kleIj} and \eqref{eq:f1kgeIj} we obtain $f'_k= 1 +O(k/2^{k/4})$. Hence by \eqref{eq:fkfk1} we conclude that
\begin{align}
f_k = f_k'+O(k/2^{k/4}) = 1 + O(k/2^{k/4}).
\end{align}
This completes the proof of Proposition~\ref{prop:fkprob}.

\section{A sequence of integrals}

In this section, we prove Theorem~\ref{theo:problem1}. This implies Theorem~\ref{thm:iteratedIk} for $I_j(1)$. Recalling $u_j$ in \eqref{eq:ukdef}, we defined the following sequence of integrals, for a sequence $(c_j)_j$,
\begin{align}\label{def:Incn}
I_j = I_j(c_j) := \int_{[0,1]^j} \frac{\dd x_1 \dd x_2 \cdots \dd x_j}{1+x_1(1+x_2(\cdots(1+x_{j-1}(1+c_jx_j))\cdots))}.
\end{align}

This sequence of iterated integrals is closely related to the so-called Dickman--Goncharov distribution, the properties of which are well studied in the literature (see e.g.~\cite[Props.~2.1 and 2.4]{Molchanov},  \cite{Chamayou} and \cite{PW2004}). Since we need small refinements of existing results, we will provide self-contained explanations for all the results below. Our approach will be based on techniques from random iterated functions/stochastic fixed-point equations.

\subsection{Probabilistic setup}
In the following, all random variables are assumed to live in a common reference probability space $(\Omega, \mathcal{F}, \mathbb{P})$.

\begin{lemma}\label{lem:iteratedmap}
Let $U, U_1, U_2, \dots$ be i.i.d. $\mathrm{Uniform}[0,1]$ random variables. Define
\begin{align*}
F_n(x) := 1 + U_n x \qquad \forall x \in \R, \quad n \in \N,
\end{align*}
and consider the sequence of iterated random functions
\begin{align}\label{eq:Sn}
S_0(x):= x, \qquad S_n(x) := F_1 \circ F_2 \circ \dots \circ F_n(x) \qquad \forall n \in \N.
\end{align}
Then the following statements hold.
\begin{itemize}
\item[(i)] The random variable
\begin{align}\label{eq:Sinfty}
S_\infty := \lim_{n \to \infty} S_n(1) = 1 + \sum_{j=1}^\infty \prod_{k=1}^j U_k
\end{align}

\noindent exists almost surely and satisfies $\mathbb{P}(1 \le S_\infty < \infty) = 1$.
\item[(ii)] Let $\theta \in (1, e)$. If $(V_n)_n$ is a sequence of random variables such that  $\lim_{n \to \infty} \theta^{-n} |V_n| = 0$ almost surely, then
\begin{align}\label{eq:S_conv}
\lim_{n \to \infty} S_n(V_n) = S_\infty \qquad \text{almost surely}.
\end{align}
\end{itemize}
\end{lemma}

\begin{remark}
The composition of maps $S_n(x) := F_1 \circ \cdots \circ F_n(x)$ in \Cref{lem:iteratedmap} may be identified with products of random matrices, i.e. 
\begin{align*}
\begin{pmatrix} S_n(x)  \\ 1\end{pmatrix}
= 
\begin{pmatrix} U_1 & 1 \\ 0 & 1 \end{pmatrix} \cdots \begin{pmatrix} U_n & 1 \\ 0 & 1 \end{pmatrix}
\begin{pmatrix} x \\ 1 \end{pmatrix} \qquad \forall x \in \mathbb{R}, \quad n \in \mathbb{N}.
\end{align*}

\noindent We choose the current formulation because many results in this section have natural extensions to nonlinear random functions $F_n$ with similar assumptions on their Lipschitz constants.
\end{remark}

\begin{proof}
By definition,
\begin{align*}
S_n(1) = 1 + \sum_{j=1}^n \prod_{k=1}^j U_k
\end{align*}
and it is immediate that $S_{n+1}(1) \ge S_n(1) \ge 1$ for all $n \in \N$. Therefore, the almost sure limit  in \eqref{eq:Sinfty} exists by monotone convergence and we have $\mathbb{P}(S_\infty \ge 1) = 1$. Moreover, 
\begin{align}\label{eq:ESinfty}
\E[S_\infty] 
= 1 + \sum_{j=1}^\infty \E\left[\prod_{k=1}^j U_k\right]
= 1 + \sum_{j=1}^\infty \E[U]^j
=\sum_{j=0}^\infty 2^{-j}
= 2
\end{align}
which implies $\mathbb{P}(S_\infty < \infty) = 1$. Thus we have verified (i).

Now suppose $\theta \in (1, e)$, and $(V_n)_n$ is a sequence of random variables such that $\theta^{-n} |V_n|$ converges almost surely to $0$ as $n \to \infty$. Since each of the functions $F_n$ is linear with Lipschitz constant $\|F_n\|_{\mathrm{Lip}} = U_n$, we have
\begin{equation}\label{eq:uniformbd}
\begin{split}
|S_n(V_n) -S_n(1)|
&= \left| F_1 \circ F_2 \circ \dots \circ F_n(V_n)  - F_1 \circ F_2 \circ \dots \circ F_n(1)\right|  \\
& = \left[\prod_{j=1}^n 
\|F_j\|_{\mathrm{Lip}}\right]  |V_n-1| = \left[\prod_{j=1}^n 
U_n\right] |V_n-1| \\
& \le \exp\left(\sum_{j=1}^n \log U_j\right) (1 + |V_n|).
\end{split}
\end{equation}
Since $\E[\log U_i] = \int_0^1 \log u\,\dd{u} = -1$, the strong law of large numbers gives
\begin{align*}
\frac{1}{n} \sum_{j=1}^n \log U_j \xrightarrow[n \to \infty]{a.s.} -1.
\end{align*}
In particular, if we choose $\varepsilon \in (0, 1 - \log \theta)$, then almost surely there exists some (random) $n_0 = n_0(\varepsilon) \in \N$ such that
\begin{align*}
    \frac{1}{n} \sum_{j=1}^n \log U_j \le -1+\varepsilon
    \qquad \text{for all $n \ge n_0$.}
\end{align*}
Substituting this into \eqref{eq:uniformbd}, we obtain
\begin{align*}
|S_n(V_n) -S_n(1)| & \le (\theta/e^{1-\varepsilon})^n \left[\theta^{-n} (1 + |V_n|)\right]
= O\left(\theta^{-n} (1 + |V_n|)\right).
\end{align*}
The assumption on $V_n$ implies that $S_n(V_n)-S_n(1)$ converges to 0 almost surely. Since $S_n(1)$ converges to $S_\infty$ almost surely by (i), we conclude that (ii) holds, i.e. $S_n(V_n)$ also converges to $S_\infty$ almost surely.
\end{proof}

\begin{corollary}
For the integral $I_n$ as in \eqref{def:Incn}, we have
\begin{align*}
I_n = \E\left[\frac{1}{S_{n-1}(1+c_n U_n )}\right].
\end{align*}
In particular, for any sequence $(c_n)_n$ satisfying $0 \le c_n = o(\theta^n)$ for some $\theta \in (1, e)$,
\begin{align*}
    \lim_{n \to \infty} I_n = \E[S_\infty^{-1}] =: I_\infty.
\end{align*}
\end{corollary}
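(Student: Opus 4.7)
The corollary splits into two statements: the identity for $I_n$ and the limit.

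The identity is a direct translation. Since the integrand in \eqref{def:Incn} is a bounded measurable function of $(x_1,\dots,x_n) \in [0,1]^n$, the $n$-fold Lebesgue integral equals the expectation under i.i.d.\ $\mathrm{Uniform}[0,1]$ variables $U_1,\dots,U_n$. Unrolling the definition \eqref{eq:Sn} of $S_{n-1}$ with these same $U_j$'s gives
\[
S_{n-1}(1 + c_n U_n) = 1 + U_1\bigl(1 + U_2\bigl(\cdots\bigl(1 + U_{n-1}(1 + c_n U_n)\bigr)\cdots\bigr)\bigr),
\]
which is exactly the denominator in \eqref{def:Incn}. Hence $I_n = \E[1/S_{n-1}(1+c_n U_n)]$, and no real work is required beyond recognizing this.

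For the limit, the plan is to apply Lemma \ref{lem:iteratedmap} to the shifted sequence $V_m := 1 + c_{m+1} U_{m+1}$. The hypothesis $c_n = o(\theta^n)$ for some $\theta \in (1,e)$, combined with $0 \le U_{m+1} \le 1$, yields $\theta^{-m} |V_m| \le \theta^{-m}(1 + c_{m+1}) \to 0$ almost surely, so the hypothesis of the lemma is met. The lemma therefore delivers $S_{n-1}(1+c_n U_n) = S_{n-1}(V_{n-1}) \to S_\infty$ almost surely as $n \to \infty$.

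Finally, to upgrade almost-sure convergence to convergence of expectations I would invoke dominated convergence. Since each map $F_j(x) = 1 + U_j x$ sends $[0,\infty)$ into $[1,\infty)$, iterating shows $S_{n-1}(y) \ge 1$ for all $y \ge 0$, and hence $1/S_{n-1}(1 + c_n U_n) \le 1$ uniformly. This yields $I_n \to \E[1/S_\infty] = I_\infty$. I do not anticipate a genuine obstacle here; the only subtlety is that the input $1 + c_n U_n$ to $S_{n-1}$ is itself a random variable depending on $U_n$, but this is already permitted by the statement of Lemma \ref{lem:iteratedmap} (whose proof uses only a deterministic Lipschitz bound on $S_n$ together with the strong law of large numbers applied to $\frac{1}{n}\sum_j \log U_j$).
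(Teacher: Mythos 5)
Your proposal is correct and follows essentially the same route as the paper: identify the iterated integral with $\E[1/S_{n-1}(1+c_nU_n)]$, apply Lemma \ref{lem:iteratedmap} to $V_{n-1}=1+c_nU_n$ using $c_n=o(\theta^n)$, and conclude via dominated convergence with the bound $1/S_{n-1}(1+c_nU_n)\le 1$ (the paper phrases this as $1/S_{n-1}(V_{n-1})\le 1/S_{n-1}(1)\le 1$). No gaps.
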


\begin{proof}
The probabilistic representation of the iterated integral $I_n$ follows immediately by construction. If we let $V_{n-1} := 1+c_n U_n$, then $\theta^{-n} V_n$ converges to $0$ almost surely as $n \to \infty$, and by \Cref{lem:iteratedmap} we also obtain that $S_{n-1}(V_{n-1})$ converges almost surely to $S_\infty$. Since
\begin{align*}
\frac{1}{S_{n-1}(V_{n-1})} \le \frac{1}{S_{n-1}(1)} \le 1,
\end{align*}
we conclude by dominated convergence that
\begin{align*}
\lim_{n \to \infty} I_n=\lim_{n \to \infty} \E\left[\frac{1}{S_{n-1}(V_{n-1})}\right] = \E[S_\infty^{-1}].
\end{align*}

\end{proof}

The next step is a simple but crucial characterisation of the distribution of $S_\infty$.
\begin{lemma}\label{lem:randrecursion}
Let $U,X$ be two independent random variables such that $U \sim \mathrm{Uniform}[0, 1]$ and
\begin{align}\label{eq:recursion}
X  \overset{(d)}{=} 1 + UX.
\end{align}
If $\mathbb{P}(|X| < \infty) = 1$, then $X \overset{(d)}{=} S_\infty$.
\end{lemma}

\begin{remark}\label{rem:indep-probspace}
While \Cref{lem:randrecursion} is conveniently formulated in terms of random variables, the statement ultimately concerns the law of $X$ only and does not require the knowledge of the underlying probability space. For instance, the distributional equality \eqref{eq:recursion} can be reformulated as
\begin{align*}
\mathbb{E}[g(X)] = \int_0^1 \mathbb{E}[g(1+uX)] du
\end{align*}
for all suitable test functions $g$ (provided both sides are well defined), and the conclusion of the lemma says that we necessarily have $\mathbb{E}[g(X)] = \mathbb{E}[g(S_\infty)]$, or equivalently $\mathbb{P}(X \le x) = \mathbb{P}(S_\infty \le x)$ for any $x \in \mathbb{R}$.
\end{remark}

\begin{proof}
Without loss of generality (by \Cref{rem:indep-probspace}), assume that $X$ is defined on the same probability space in \Cref{lem:iteratedmap} such that $X$ is independent of all the uniformly distributed random variables $U, U_1, U_2, U_3, \dots$; our goal is to verify the following two claims:
\begin{enumerate}
\item The random variable $S_\infty := \lim_{n \to \infty} S_n(1)$ satisfies the distributional equality \eqref{eq:recursion}, i.e. $S_\infty \overset{(d)}{=} 1 + U S_\infty$.

This is straightforward by a quick reordering of the underlying i.i.d. random variables/iterated maps. Indeed,
\begin{align*}
S_n(1) = F_1 \circ \dots \circ F_n(1)
&\overset{(d)}{=} F_n \circ F_1 \circ F_2 \circ \dots \circ F_{n-1}(1)\\
& =  1 + U_n S_{n-1}(1)
\xrightarrow[n \to \infty]{(d)} 1 + U S_\infty.
\end{align*}

\item If $X$ satisfies $\mathbb{P}(|X| < \infty)$ and \eqref{eq:recursion}, then $X \overset{(d)}{=} S_\infty$.

To establish this claim, observe that for any $n \in \N$ we have
\begin{align*}
X 
\overset{(d)}{=} F_1(X)
\overset{(d)}{=} \cdots
\overset{(d)}{=} F_1\circ F_2 \circ \dots \circ F_n (X)
=  S_n(X).
\end{align*}
Since $\mathbb{P}(|X| < \infty)$ and in particular $2^{-n} |X| \xrightarrow[n \to \infty]{a.s.} 0$, we apply \Cref{lem:iteratedmap} with $V_n=X$ and obtain $S_n(X) \xrightarrow[n \to \infty]{a.s.} S_\infty$. In other words, 
\begin{align*}
X \overset{(d)}{=} S_\infty = 1 + \sum_{j =1}^{\infty} \prod_{k =1}^j U_k,
\end{align*}
which concludes the proof.
\end{enumerate}
\end{proof}

\subsection{Proof of \Cref{theo:problem1}}
The recursive distributional equation \eqref{eq:recursion} is a very convenient tool that helps us control the rate of convergence of $S_n(\cdot)$ and extract information about the statistical behaviour of $S_\infty$ at the same time. We first explain how to estimate the difference between $I_n(c_n)$ and its limit $I_\infty$.

\begin{lemma}\label{lem:I_error}
For any nonnegative sequence $(c_n)_n$, we have
\begin{align} \label{eq:I_error}
|I_n(c_n) - I_\infty| &\le  2^{1-n} + \E \left[\min\left(1, c_n \prod_{j \le n} U_j\right) \right] \ \le \ 2^{-n} (2+c_n).
\end{align}
\end{lemma}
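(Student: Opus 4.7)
The plan is to couple $I_n(c_n)$ and $I_\infty$ through the iterated distributional identity implied by the recursion $S_\infty \overset{(d)}{=} 1+US_\infty$ and then use a Lipschitz estimate for the map $x \mapsto 1/S_{n-1}(x)$.

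First I would iterate the recursion $n$ times to obtain $S_\infty \overset{(d)}{=} S_n(\tilde{S}_\infty) = S_{n-1}(1+U_n \tilde{S}_\infty)$, where $\tilde{S}_\infty$ is an independent copy of $S_\infty$, independent also of $U_1,\dots,U_n$. By the preceding corollary, $I_n(c_n)=\E[1/S_{n-1}(1+c_nU_n)]$, while this identity rewrites $I_\infty$ as $\E[1/S_{n-1}(1+U_n\tilde{S}_\infty)]$. Hence
\[
I_n(c_n)-I_\infty = \E\!\left[\frac{1}{S_{n-1}(1+c_nU_n)} - \frac{1}{S_{n-1}(1+U_n\tilde{S}_\infty)}\right].
\]

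Next I would exploit that $S_{n-1}(\cdot)$ is linear: $S_{n-1}(x)=S_{n-1}(0)+xP_{n-1}$ with $P_{n-1}:=\prod_{j<n}U_j$, and $S_{n-1}(x)\ge 1$ for $x\ge 0$. Therefore
\[
\left|\frac{1}{S_{n-1}(A)}-\frac{1}{S_{n-1}(B)}\right|=\frac{|A-B|P_{n-1}}{S_{n-1}(A)S_{n-1}(B)} \le |A-B|P_{n-1},
\]
and in any case this difference is bounded by $1$. Taking $A=1+c_nU_n$, $B=1+\tilde{S}_\infty U_n$ gives $|A-B|P_{n-1}=|\tilde{S}_\infty-c_n|P_n$ with $P_n:=\prod_{j\le n}U_j$, so
\[
|I_n(c_n)-I_\infty|\le \E\!\bigl[\min\bigl(1,\,|\tilde{S}_\infty-c_n|P_n\bigr)\bigr].
\]

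To finish, I would apply the triangle inequality $|\tilde{S}_\infty-c_n|\le \tilde{S}_\infty+c_n$ together with the subadditivity $\min(1,a+b)\le \min(1,a)+\min(1,b)$ for $a,b\ge 0$, splitting the bound as
\[
\E\bigl[\min(1,\tilde{S}_\infty P_n)\bigr]+\E\bigl[\min(1,c_nP_n)\bigr].
\]
The first term is at most $\E[\tilde{S}_\infty]\,\E[P_n]=2\cdot 2^{-n}=2^{1-n}$, using independence of $\tilde{S}_\infty$ from $U_1,\dots,U_n$ and the computation $\E[S_\infty]=2$ from \eqref{eq:ESinfty}. This yields the first inequality in \eqref{eq:I_error}, and bounding the remaining term trivially by $\E[c_nP_n]=c_n2^{-n}$ gives the second.

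The only subtlety is ensuring that the distributional identity $S_\infty \overset{(d)}{=} S_n(\tilde{S}_\infty)$ holds on the same probability space as our $U_j$'s and that $\tilde{S}_\infty$ is genuinely independent of $U_1,\dots,U_n$; this is routine given the i.i.d.\ reordering argument already used in the proof of the recursive distributional equation \eqref{eq:recursion}. The rest is a clean Lipschitz-plus-Markov estimate.
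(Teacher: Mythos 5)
Your proof is correct and takes essentially the same route as the paper: it represents $I_\infty$ through the iterated fixed-point identity, uses the linearity/Lipschitz property of $S_{n-1}$ together with $S_{n-1}\ge 1$, splits by the triangle inequality, bounds the $\tilde S_\infty$-term by $\E[S_\infty]\,\E\big[\prod_{j\le n}U_j\big]=2^{1-n}$ via independence, and retains $\E\big[\min\big(1,c_n\prod_{j\le n}U_j\big)\big]$ for the other term. The only (harmless) variation is that you unroll one extra step so that $U_n$ is shared between the two arguments, giving $|\tilde S_\infty-c_n|\prod_{j\le n}U_j$ directly, whereas the paper compares $1+c_nU_n$ with an independent $T\overset{(d)}{=}S_\infty$; both couplings yield the identical bound.
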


\begin{proof}
Recall that $V_{n-1} := 1+c_{n} U_{n}$ and
\begin{align*}
I_n(c_n) = \E\left[ \frac{1}{S_{n-1}(V_{n-1})}\right].
\end{align*}
On the other hand, if we introduce a new random variable $T \overset{(d)}{=} S_\infty$ that is independent of all of the $U_j$'s, we see that
\begin{align*}
S_{n-1}(T) \overset{(d)}{=} S_\infty \qquad \text{and hence} \qquad 
I_\infty = \E[S_\infty^{-1}] = \E\left[\frac{1}{S_{n-1}(T)}\right]
\end{align*}
by the distributional fixed point equation \eqref{eq:recursion}. 

Since $S_{n-1}(\cdot)$ is linear with Lipschitz constant $\|S_{n-1}\|_{\mathrm{Lip}} = \prod_{j \le n-1} U_j$, we have
\begin{equation}\label{eq:IncnIinfty}
\begin{split}
|I_n(c_n) - I_\infty|
&= \left|\E\left[\frac{1}{S_{n-1}(V_{n-1})} - \frac{1}{S_{n-1}(T)}\right]\right| \\
& \le \E\left[\frac{\|S_{n-1}\|_{\mathrm{Lip}}\left| V_{n-1} - T\right|}{S_{n-1}(V_{n-1}) S_{n-1}(T)}\right] \\
& \le \E\left[\frac{\|S_{n-1}\|_{\mathrm{Lip}}\left| T - 1\right|}{ S_{n-1}(T)}\right]
+
\E\left[\frac{\|S_{n-1}\|_{\mathrm{Lip}}\left| V_{n-1} - 1\right|}{S_{n-1}(V_{n-1})}\right].
\end{split}
\end{equation}
Since $\E[U_j]=1/2$, $\E[T]=\E[S_\infty]=2$ by \eqref{eq:ESinfty} and $\mathbb{P}(T \ge 1) = \mathbb{P}(S_{n-1}(T) \ge 1) = \mathbb{P}(S_\infty \ge 1) = 1$, the first term on the right-hand side of \eqref{eq:IncnIinfty} satisfies
\begin{align*}
\E\left[\frac{\|S_{n-1}\|_{\mathrm{Lip}}\left| T - 1\right|}{ S_{n-1}(T)}\right] 
& \le \E\left[\|S_{n-1}\|_{\mathrm{Lip}}\left| T - 1\right|\right] \\
& =\E\left[ \left(\prod_{j\le n-1} U_j\right)\left| T - 1\right|\right] = \E[T - 1]\prod_{j \le n-1}\E\left[ U_j\right] = 2^{1-n}.
\end{align*}

Next, observe that $V_{n-1}-1=c_nU_n$ and $S_{n-1}(x) \ge 1$ for any $x \ge 0$. This means
\begin{align*}
\frac{\|S_{n-1}\|_{\mathrm{Lip}}\left| V_{n-1} - 1\right|}{S_{n-1}(V_{n-1})} \le c_n\prod_{j \le n} U_j.
\end{align*}
On the other hand, $S_{n-1}(V_{n-1})\ge  1 +  V_{n-1}\|S_{n-1}\|_{\mathrm{Lip}}\ge \|S_{n-1}\|_{\mathrm{Lip}}\left| V_{n-1} - 1\right|$, which leads to a slightly improved bound
\begin{align*}
\frac{\|S_{n-1}\|_{\mathrm{Lip}}\left| V_{n-1} - 1\right|}{S_{n-1}(V_{n-1})} \le 
\min\left(1, c_n\prod_{j \le n} U_j\right).
\end{align*}
Taking expectation both sides and plugging this back into \eqref{eq:IncnIinfty} yields the first inequality in \eqref{eq:I_error}, and the second inequality in \eqref{eq:I_error} follows from $\E[c_n \prod_{j \le n} U_j] = 2^{-n} c_n$.
\end{proof}

It remains to show that the value of $I_\infty$ equals $e^{-\gamma}$. This will be achieved using the recursive distributional equation \eqref{eq:recursion} with the help of Laplace transform $\phi(t) := \E[e^{-tS_\infty}]$, which is intrinsically related to our problem because
\begin{align}\label{eq:preLaplace}
I_\infty = \E[S_\infty^{-1}] = \E \left[ \int_0^\infty e^{-t S_\infty}\dd t\right] = \int_0^\infty \phi(t) \,\dd t
\end{align}
by Fubini's theorem. Let us first highlight that:
\begin{lemma}
The Laplace transform $\phi(t) := \E[e^{-tS_\infty}]$ satisfies
\begin{align}\label{eq:ODE}
te^t \phi(t)= \int_0^t \phi(v) \,\dd v, \qquad t \ge 0.
\end{align}
In particular,
\begin{align}\label{eq:preDickman}
I_\infty
= \lim_{t \to \infty} t e^t \phi(t).
\end{align}
\end{lemma}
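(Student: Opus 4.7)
The plan is to derive the integral equation \eqref{eq:ODE} directly from the distributional identity $S_\infty \overset{(d)}{=} 1 + U S_\infty$ (with $U \sim \mathrm{Uniform}[0,1]$ independent of $S_\infty$) established just above, and then deduce \eqref{eq:preDickman} by combining \eqref{eq:ODE} with the Fubini representation $I_\infty = \int_0^\infty \phi(v)\,\dd v$ from \eqref{eq:preLaplace}.

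First, applying the expectation $\E[e^{-t\,\cdot}]$ to both sides of the distributional equation and using independence of $U$ and $S_\infty$ gives
\[
\phi(t) \;=\; \E\bigl[e^{-t(1+U S_\infty)}\bigr] \;=\; e^{-t}\,\E\bigl[e^{-tU S_\infty}\bigr].
\]
Next I would condition on $S_\infty$ and integrate over $U$ to write $\E[e^{-tUS_\infty}\mid S_\infty] = \int_0^1 e^{-tuS_\infty}\,\dd u$. For fixed $t>0$, the substitution $v = tu$ converts this into $\tfrac{1}{t}\int_0^t e^{-vS_\infty}\,\dd v$. Applying Tonelli's theorem (the integrand is non-negative) then yields
\[
\E\bigl[e^{-tU S_\infty}\bigr] \;=\; \frac{1}{t}\int_0^t \E\bigl[e^{-vS_\infty}\bigr]\,\dd v \;=\; \frac{1}{t}\int_0^t \phi(v)\,\dd v,
\]
and plugging this into the previous display produces exactly \eqref{eq:ODE}. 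The $t=0$ case is trivial since both sides vanish.

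For \eqref{eq:preDickman}, rewrite \eqref{eq:ODE} as $te^t\phi(t) = \int_0^t \phi(v)\,\dd v$ and let $t \to \infty$. Since $\phi \ge 0$, monotone convergence gives
\[
\lim_{t \to \infty} t e^t \phi(t) \;=\; \int_0^\infty \phi(v)\,\dd v \;=\; I_\infty,
\]
the last equality being \eqref{eq:preLaplace}; note this limit is finite because $S_\infty \ge 1$ forces $I_\infty = \E[S_\infty^{-1}] \le 1$.

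There is no real obstacle here: the only points to verify carefully are the justification of Tonelli (automatic from non-negativity) and the continuity of $\phi$ (immediate since $\phi$ is a Laplace transform of a probability distribution). The main content is purely the observation that averaging $e^{-tUS_\infty}$ over the uniform variable $U$ is precisely what produces the integral $\tfrac{1}{t}\int_0^t \phi(v)\,\dd v$, which couples the fixed-point equation \eqref{eq:recursion} to the target quantity $I_\infty$.
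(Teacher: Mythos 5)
Your proposal is correct and follows essentially the same route as the paper: apply $\E[e^{-t\,\cdot}]$ to the fixed-point identity \eqref{eq:recursion}, use independence to average $e^{-tUS_\infty}$ over $U$ (which after the substitution $v=tu$ gives $\tfrac{1}{t}\int_0^t\phi(v)\,\dd v$), and then let $t\to\infty$ using \eqref{eq:preLaplace}. The extra care you take with Tonelli and monotone convergence only makes explicit what the paper leaves implicit.
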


\begin{proof}
From the recursive distribution equation \eqref{eq:recursion}, we have
\begin{align*}
\phi(t)
& := \E[e^{-tS_\infty}]
= \E[e^{-t(1+US_\infty)}]\\
& = e^{-t} \int_0^1 \E[e^{-tu S_\infty}]  \dd u 
= \frac{e^{-t}}{t} \int_0^t \E[e^{-v S_\infty}] \dd v.
\end{align*}
Hence $t e^t \phi(t) = \int_0^t \phi(v)\dd v $, as claimed. In particular $\lim_{t \to \infty} t e^t \phi(t) = \int_0^\infty \phi(v) \dd v = I_\infty$.
\end{proof}

\begin{proof}[Proof of \Cref{theo:problem1}]
Differentiating the equality \eqref{eq:ODE} yields $te^t\phi'(t)+(te^t)'\phi(t)=\phi(t)$, which may be rewritten as
\begin{align*}
\frac{\phi'(t)}{\phi(t)} = \frac{1- (te^t)'}{te^t} = \frac{e^{-t} - 1}{t}-1.
\end{align*}
Since $\phi(0)=1$, we then obtain
\begin{align*}
\log \phi(x) & = \log \phi(x) - \log \phi(0) = \int_0^x \frac{\dd}{\dd u}\Big[\log\phi(u)\Big]\dd{u}= \int_0^x \frac{\phi'}{\phi}(u)\dd{u}\\
& = \int_0^x \Big(\frac{e^{-u} - 1}{u}-1\Big)\dd{u} = \int_0^x \left(e^{-u}-1\right)\frac{\dd u}{u}-x\\
& = \left[ (e^{-u} - 1) \log u \right]_0^x + \int_0^x e^{-u} \log u \,\dd u - x.
\end{align*}
From Euler's identity for $\gamma$ \cite[Eq. (2.2.8)]{Lagarias},
\begin{align*}
\gamma = -\int_0^\infty e^{-u} \log u \,\dd u,
\end{align*}
we see as $x \to \infty$,
\begin{align*}
\log \phi(x) = - \log x - x - \gamma + o(1).
\end{align*}
Substituting this into \eqref{eq:preDickman}, we obtain $I_\infty
= \lim_{x \to \infty} x e^x \phi(x) = e^{-\gamma}$. 
Combining this with \Cref{lem:I_error}, we conclude that $I_n = e^{-\gamma} + O(2^{-n}(1+c_n))$.
\end{proof}
\section*{Acknowledgments}
Collaboration for this paper stems in part from a problem session during the `50 Years of Number Theory and Random Matrix Theory Conference' at the Institute for Advanced Study (IAS) in Princeton.
The authors would like to thank the conference organizers, as well as Paul Kinlaw and the anonymous referee for careful feedback. The first author is supported by funding from the European Research Council (ERC)
under the European Union's Horizon 2020 research and innovation programme (grant agreement No 851318). The second author is supported by a Clarendon Scholarship at the University of Oxford. The second and third author acknowledge support by travel grants from the Heilbronn Institute for Mathematical Research (HIMR).

\bibliographystyle{amsplain}

\begin{thebibliography}{10}


\bibitem{BM}
W.~D. Banks and G.~Martin, \emph{Optimal primitive sets with restricted
  primes}, Integers \textbf{13} (2013), Paper No. A69, 10.

\bibitem{BKL}
J.~Bayless, P.~Kinlaw, and J.~D. Lichtman, \emph{Higher {M}ertens constants for
  almost primes}, J. Number Theory \textbf{234} (2022), 448--475.

\bibitem{Chamayou}
J.~M.~F. Chamayou, \emph{A probabilistic approach to a differential-difference equation arising in analytic number theory},
Math. Comput. \textbf{27} (1973), no.~121, 197--203.

\bibitem{CLP2} T.~H. Chan, J.~D. Lichtman, C. Pomerance, \textit{A generalization of primitive sets and a conjecture of Erd\H{o}s}, Discrete Analysis 2020:16, 13 pp.

\bibitem{CLPk} \bysame, \textit{On the critical exponent for $k$-primitive sets},  Combinatorica (2021), 19 pp.

\bibitem{Erd}
P.~Erd\H{o}s, \emph{Note on {S}equences of {I}ntegers {N}o {O}ne of {W}hich is
  {D}ivisible {B}y {A}ny {O}ther}, J. London Math. Soc. \textbf{10} (1935),
  no.~2, 126--128.

\bibitem{ErdSark}
P.~Erd\H{o}s and A.~S\'{a}rk\"{o}zy, \emph{On the number of prime factors in
  integers}, Acta Sci. Math. (Szeged) \textbf{42} (1980), no.~3-4, 237--246.


\bibitem{HT2002}
H.-K. Hwang and T.-H. Tsai, \emph{Quickselect and the {D}ickman function},
  Combin. Probab. Comput. \textbf{11} (2002), no.~4, 353--371.

\bibitem{Lagarias}
J.~C. Lagarias, \emph{Euler's constant: {E}uler's work and modern
  developments}, Bull. Amer. Math. Soc. (N.S.) \textbf{50} (2013), no.~4,
  527--628.

\bibitem{Lalmost}
J.~D. Lichtman, \emph{Almost primes and the {B}anks-{M}artin conjecture}, J. Number Theory \textbf{211} (2020), 513--529.

\bibitem{Ldissect}
\bysame, \emph{Mertens' prime product formula, dissected}, Integers
  \textbf{21A} (2021), no.~Ron Graham Memorial Volume, Paper No. A17, 15.

\bibitem{LEPSC}
\bysame, \emph{A proof of the {Erd{\H{o}}s} primitive set conjecture}, Forum Math. Pi \textbf{11} (2023), e18, 22 pp.

\bibitem{Ltranslate}
\bysame, \textit{Translated sums of primitive sets},
 Comptes Rendus. Math\'ematique, 360 (2022), 409--414.

\bibitem{Molchanov}
S.~A. Molchanov and V.~A. Panov,
\emph{The {Dickman}-{Goncharov} distribution}, Russ. Math. Surv. \textbf{75} (2020), no.~6, 1089--1132.

\bibitem{PW2004}
M.~D. Penrose and A.~R. Wade, \emph{Random minimal directed spanning trees and
  {D}ickman-type distributions}, Adv. in Appl. Probab. \textbf{36} (2004),
  no.~3, 691--714.

\bibitem{Zh2}
Z.~Zhang, \emph{On a problem of {E}rd{\H{o}}s concerning primitive sequences},
  Math. Comp. \textbf{60} (1993), no.~202, 827--834.

\end{thebibliography}

\providecommand{\bysame}{\leavevmode\hbox to3em{\hrulefill}\thinspace}
\providecommand{\MR}{\relax\ifhmode\unskip\space\fi MR }
% \MRhref is called by the amsart/book/proc definition of \MR.
\providecommand{\MRhref}[2]{%
  \href{http://www.ams.org/mathscinet-getitem?mr=#1}{#2}
}
\providecommand{\href}[2]{#2}

\end{document}